\title[Deep $\Pi^0_1$ classes]
{Deep $\Pi^0_1$ classes}
\keywords{computability theory, algorithmic randomness, $\Pi^0_1$ classes, probabilistic computation}
\subjclass{03D32, 68Q30, 03D80}
\author{Laurent Bienvenu}
\address{Laboratoire CNRS J.-V. Poncelet, 119002, Bolshoy Vlasyevskiy Pereulok 11, Moscow, Russia}
\email{laurent.bienvenu@computability.fr}
\author{Christopher P.\ Porter}
\address{Department of Mathematics, University of Florida,
Gainesville, Florida 32611- 8105, USA
}
\email{cp@cpporter.com} 
\thanks{Both authors acknowledge the support of the John Templeton Foundation through the grant ``Structure and Randomness in the Theory of Computation."}
\newtheorem{theorem}{Theorem}[section]
\newtheorem{proposition}[theorem]{Proposition}
\newtheorem{corollary}[theorem]{Corollary}
\newtheorem{lemma}[theorem]{Lemma}
\theoremstyle{definition}
\newtheorem{definition}[theorem]{Definition}
\newtheorem{fact}[theorem]{Fact}
\newtheorem{remark}[theorem]{Remark}
\newtheorem{question}{Question}
\newcommand{\C}{\mathcal{C}}
\newcommand{\CF}{\mathcal{CF}}
\newcommand{\D}{\mathcal{D}}
\newcommand{\E}{\mathcal{E}}
\newcommand{\F}{\mathcal{F}}
\newcommand{\I}{\mathbf{I}}
\newcommand{\K}{\mathrm{K}}
\newcommand{\KK}{\mathcal{K}}
\renewcommand{\L}{\mathcal{L}}
\newcommand{\M}{\mathbf{M}}
\newcommand{\Mb}{\overline{\mathbf{M}}}
\newcommand{\m}{\mathbf{m}}
\newcommand{\N}{\mathbb{N}}
\renewcommand{\P}{\mathcal{P}}
\newcommand{\Q}{\mathbb{Q}}
\newcommand{\sS}{\mathcal{S}}
\newcommand{\U}{\mathcal{U}}
\newcommand{\V}{\mathcal{V}}
\newcommand{\W}{\mathcal{W}}
\newcommand{\PA}{\mathsf{PA}}
\newcommand{\sPA}{\mathcal{PA}}
\newcommand{\DNC}{\mathcal{DNC}}
\newcommand{\uh}{{\upharpoonright}}
\newcommand{\KR}{\mathsf{KR}}
\newcommand{\MLR}{\mathsf{MLR}}
\newcommand{\WtwoR}{\mathsf{W2R}}
\newcommand{\DiffR}{\mathsf{DiffR}}
\newcommand{\CR}{\mathsf{CR}}
\newcommand{\Low}{\mathrm{Low}}
\newcommand{\High}{\mathrm{High}}
\newcommand{\cs}{2^\N}
\newcommand{\str}{2^{<\N}}
\newcommand{\emptystring}{\Lambda}
\renewcommand{\tt}{\mathit{tt}}
 \newcommand{\la}{\langle}
\newcommand{\ra}{\rangle}
 \newcommand{\llb}{\llbracket}
\newcommand{\rrb}{\rrbracket}
\newcommand{\binrat}{\mathbb{Q}_2}
\newcommand{\dom}{\mathsf{dom}}
\newcommand{\atom}{\mathsf{Atom}}
\begin{document}

\begin{abstract}
A set of infinite binary sequences $\C\subseteq\cs$ is \emph{negligible} if there is no partial probabilistic algorithm that produces an element of this set with positive probability. The study of negligibility is of particular interest in the context of $\Pi^0_1$ classes. In this paper, we introduce the notion of depth for $\Pi^0_1$ classes, which is a stronger form of negligibility. Whereas a negligible $\Pi^0_1$ class $\C$ has the property that one cannot probabilistically compute a member of $\C$ with positive probability, a deep $\Pi^0_1$ class $\C$ has the property that one cannot probabilistically compute an \emph{initial segment} of a member of $\C$ with \emph{high} probability.  That is, the probability of computing a length $n$ initial segment of a deep $\Pi^0_1$ class converges to~0 effectively in $n$.

We prove a number of basic results about depth, negligibility, and a variant of negligibility that we call $\tt$-negligibility.  We provide a number of examples of deep $\Pi^0_1$ classes that occur naturally in computability theory and algorithmic randomness.  We also study deep classes in the context of mass problems, examine the relationship between deep classes and certain lowness notions in algorithmic randomness, and establish a relationship between members of deep classes and the amount of mutual information with Chaitin's $\Omega$. 
\end{abstract}

\maketitle

\tableofcontents

\section{Introduction} 

Much of the work carried out in computability theory since its inception has been concerned with deterministic computation, that is, computational procedures with the property that after any finite number of steps have been carried out, there is one unique step that follows from them. In contrast, the idea of probabilistic computation, which is prominent in computational complexity, rarely comes up in computability theory. One of the reason for this is the early result of De Leeuw, Moore, Shannon and Shapiro~\cite{DeLeeuwMSS1956} (independently proven by Sacks~\cite{Sacks1963}), which states that if an infinite sequence can be generated with positive probability by a probabilistic computation, then it can in fact be generated deterministically. However, there are some \emph{sets}~$\C$ of infinite sequences which contain no computable element, but such that a probabilistic computation can produce a member of $\C$ with positive probability. A trivial example is the set $\C$ of non-computable sequences: it suffices to pick each bit at random (with equal probability for $0$ and $1$, independently of the other bits), and since there are only countably many computable sequences, the resulting sequence will be non-computable with probability one.


The basic objects of the present investigation are $\Pi^0_1$ classes, that is, effectively closed subclasses of $\cs$.  Our goal is to study the properties of those $\Pi^0_1$ classes whose members cannot be probabilistically obtained with positive probability.  For our purposes, the model of probabilistic computation that is most convenient is given by oracle Turing machines equipped with algorithmically random oracles.  Thus, for a given $\Pi^0_1$ class $\C\subseteq\cs$, the question
\begin{itemize}
\item[] Can one probabilistically compute some member of $\C$ with positive probability?
\end{itemize}
amounts to asking
\begin{itemize}
\item[] Does the class of oracles that compute some member of $\C$ have positive Lebesgue measure?
\end{itemize}

We will discuss two specific types of $\Pi^0_1$ classes, \emph{negligible classes} and \emph{deep classes}, focussing in particular on the latter type, which is a special case of the former.

Roughly, a $\Pi^0_1$ class~$\C$ is \emph{negligible} if the probability of producing a member of~$\C$ via any probabilistic algorithm is 0.  Rephrased in terms of the model of probabilistic computation we use here, $\C$ is negligible if the collection
$\bigcup_{e\in\N}\Phi_e^{-1}(\C)$ has Lebesgue measure~0, where $(\Phi_e)_{e\in\N}$ is an effective enumeration of all Turing functionals.  The first example of a negligible class appeared in \cite{JockuschS1972}, in which it was proved that the $\Pi^0_1$ class of consistent completions of Peano arithmetic is negligible.  Negligibility was further studied in  \cite{Levin1984}, \cite{LevinV1977}, and \cite{Vyugin1982} (the term \emph{negligible} is explicitly introduced in the latter reference).

Depth can be viewed as a strong form of negligibility.  A deep $\Pi^0_1$ class $\C$ has the property that producing an initial segment of some member of $\C$ is in some sense maximally difficult: the probability of obtaining such an initial segment not only converges to 0 as we consider longer initial segments of members of $\C$, but this convergence is fast, as we can effectively bound the rate of convergence.  That initial segments of members of deep classes are so difficult to produce via probabilistic computation reflects the fact that members of deep classes are highly structured: initial segments of these sequences cannot be successfully produced by any combination of Turing machine and random oracle.  


Although the notion of depth is isolated for the first time in the present study, it is implicitly used in the work of both Levin \cite{Levin2013} and Stephan \cite{Stephan2002}.  In fact, in each of these papers one can extract a proof that the consistent completions of Peano arithmetic form a deep class, a result that we reprove in Section \ref{sec:examples}.  

The primary goals of this paper are (1) to prove a number of basic results about deep $\Pi^0_1$ classes, (2) to determine the exact relationship between negligibility and depth (and a related notion we call \emph{$\tt$-negligibility}), and (3) to provide a number of examples of deep $\Pi^0_1$ classes that occur naturally in computability theory and algorithmic randomness.  

The outline of this paper is as follows:  In Section \ref{sec:background}, we provide the technical background for the remainder of the study.  The notions of negligibility and $\tt$-negligibili\-ty are introduced and compared in Section \ref{sec:neg}, while the notions of depth and $\tt$-depth are introduced in Section \ref{sec:depth}. In~Section \ref{sec:depth}, we also separate depth from negligibility, show the equivalence between $\tt$-depth and $\tt$-negligibility, and prove several basic facts about these various kinds of classes.  Section \ref{sec:comp-limits} contains a brief discussion of the levels of randomness that guarantee that a random sequence cannot compute any member of any $\tt$-negligible, negligible, and deep class.  We consider the notions of depth and negligibility in the context of Medvedev and Muchnik reducibility in Section \ref{sec:mass-problems}.  In Section \ref{sec:examples} we present six examples  of families of deep classes, each of which is defined in terms of some well-studied notion from computability theory and algorithmic randomness:  completions of arithmetic, shift-complex sequences, diagonally non-computable functions, compression functions, finite sets of maximally incompressible strings, and dominating martingales related to the class $\High(\CR,\MLR)$.  In Section \ref{sec:lowness}, we establish connections between depth, $\tt$-depth and several lowness notions.  Lastly, in Section \ref{sec:IP}, we apply the notion of mutual information to deep classes, generalizing a result of Levin's from~\cite{Levin2013}.

\section{Background}\label{sec:background}

\subsection{Notation}\label{subsec:notation}

Let us fix some notation and terminology.  We denote by $\cs$ the set of infinite binary sequences (which we often refer to as ``sequences"), also known as Cantor space. We denote the set of finite strings by~$\str$ and the empty string by~$\emptystring$. $\binrat$ is the set of dyadic rationals, i.e., multiples of a negative power of~$2$.  Given $X\in \cs$ and an integer~$n$, $X \uh n$ is the string that consists of the first~$n$ bits of~$X$, and $X(n)$ is the $(n+1)^\mathrm{st}$ bit of~$X$ (so that $X(0)$ is the first bit of $X$).  For integers $n$ and $k$, $X \uh [n,n+k]$ denotes the subword $\sigma$ of $X$ of length $k+1$ such that for $i \leq k$, $\sigma(i)=X(i+n)$.

If $\sigma$ is a string and $x\in\str\cup\cs$, then $\sigma \preceq x$ means that $\sigma$ is a prefix of~$x$. A prefix-free set of strings is a set of strings such that none of its elements is a strict prefix of another one. For strings $\sigma,\tau\in\str$, $\sigma^\frown\tau$ denotes the string obtained by concatenating $\sigma$ and $\tau$.  Similarly, for $\sigma\in\str$ and $X\in\cs$, $\sigma^\frown X$ is the sequence obtained by concatenating $\sigma$ and $X$. For $X,Y\in\cs$, $X\oplus Y\in\cs$ satisfies $X\oplus Y(2n)=X(n)$ and $X\oplus Y(2n+1)=Y(n)$.

Given a string~$\sigma$, the \emph{cylinder} $\llb\sigma\rrb$ is the set of elements of $\cs$ having~$\sigma$ as a prefix.  Moreover, given $S\subseteq\str$, $\llb S\rrb$ is defined to be the set $\bigcup_{\sigma\in S}\llb\sigma\rrb$. When we refer to the topology of the Cantor space, we implicitly mean the product topology, i.e., the topology whose open sets are exactly those of type $\llb S\rrb$ for some~$S$. For this topology, some sets are both open and closed (clopen): these are the sets of type $\llb S\rrb$ when~$S$ is finite. 

A \emph{tree} is a set of strings that is closed downwards under the prefix relation. A \emph{path} through a tree~$T$ is a member of~$\cs$ all of whose prefixes are in~$T$. The set of paths of a tree~$T$ is denoted by~$[T]$. The \emph{$n^\mathrm{th}$ level of a tree}~$T$, denoted $T_n$, is the set of members of~$T$ of length~$n$.  An effectively open set, also called a $\Sigma^0_1$ class, is a set of type $\llb S \rrb$ for some c.e.\ set of strings~$S$. An effectively closed set, or $\Pi^0_1$ class, is the complement of some effectively open set. It is well known that a class is $\Pi^0_1$ if and only if it is of the form~$[T]$ for some co-c.e. (or computable) tree~$T$. Given a $\Pi^0_1$ class~$\C$, its \emph{canonical co-c.e.\ tree} is the tree $T=\{\sigma \colon \llb\sigma\rrb \cap \C \not= \emptyset\}$. The arithmetic hierarchy is defined inductively: a set is $\Sigma^0_{n+1}$ if it is a uniform union of $\Pi^0_n$ sets, and a set is $\Pi^0_{n+1}$ if it is a uniform intersection of~$\Sigma^0_n$ sets. 

A real number $r\in[0,1]$ is \emph{left-c.e.}\ (resp.\ \emph{right-c.e.}) if it is the limit of a non-decreasing (resp.\ non-increasing) computable sequence of rationals.

An \emph{order function} is a function $h: \N \rightarrow \N$ that is non-decreasing and unbounded. Given an order function~$h$, the \emph{inverse of~$h$}, denoted $h^{-1}$ is the order function defined as follows: For all~$k$, $h^{-1}(k)$ is the smallest $n$ such that $h(n) \geq k$. Note that $h^{-1}$ is computable if~$h$ is.

We adopt standard computability notation: $\leq_T$ denotes Turing reducibility, $\leq_{tt}$ denotes $\tt$-reducibility, $A'$ is the Turing jump of~$A$ (and $\emptyset'$ denotes the jump of the zero sequence). Throughout the paper, when an object~$A$ has a computable enumeration or limit approximation, we use the notation~$A[s]$ to denote the approximation of the object at stage~$s$. Moreover, if $A$ contains several expressions that have a computable approximation, the notation~$A[s]$ means that all of these expressions are approximated up to stage~$s$. For example, if $T$ is a tree and~$f$ is a function, both of which have computable approximation, $T_{f(n)}[s]$ is equal to $(T[s])_{f(n)[s]}$.

All logarithms will be taken with respect to the base 2.  Finally, we adopt the following asymptotic notation. For two functions $f,g: \N \rightarrow \N$, we sometimes write $f \leq^+ g$ to abbreviate $f \leq g + O(1)$ and $f \leq^\times g$ to abbreviate $f = O(g)$.

\subsection{Measures on $\cs$}\label{subsec:measures}

Recall that by Caratheodory's Theorem, a measure $\mu$ on $\cs$ is uniquely determined by specifying the values of $\mu$ on the basic open sets of $\cs$, where $\mu(\llb\sigma\rrb)=\mu(\llb\sigma0\rrb)+\mu(\llb\sigma1\rrb)$ for every $\sigma\in\str$.  If we further require that $\mu(\cs)=1$, then $\mu$ is a \emph{probability measure}. Hereafter, we will write $\mu(\llb\sigma\rrb)$ as $\mu(\sigma)$.  In addition, for a set $S\subseteq\str$, we will write $\mu(S)$ as shorthand for $\mu(\llb S\rrb)$.  In the case that $S$ is prefix-free, we will have $\mu(S)=\sum_{\sigma\in\str} \mu(\sigma)$.

The \emph{uniform (or Lebesgue) measure}~$\lambda$ is the unique Borel measure such that $\lambda(\sigma)=2^{-|\sigma|}$ for all strings~$\sigma$.  A measure $\mu$ on $\cs$ is \emph{computable} if $\sigma \mapsto \mu(\sigma)$ is computable as a real-valued function, i.e., if there is a computable function $\tilde \mu:\str\times\N\rightarrow\binrat$ such that
\[|\mu(\sigma)-\tilde \mu(\sigma,i)|\leq 2^{-i}\]
for every $\sigma\in\str$ and $i\in\N$.    

One family of examples of computable measures is given by the collection of Dirac measures concentrated on some computable point.  That is, if $X\in\cs$ is a computable sequence, then the \emph{Dirac measure concentrated on $X$}, denoted $\delta_X$, is defined as follows:
\[
\delta_X(\sigma)=
\left\{
	\begin{array}{ll}
	1  & \mbox{if } \sigma\prec X\\
	0 & \mbox{if } \sigma\not\prec X
	\end{array}.
\right.
\]
More generally, for a measure $\mu$, we say that $X\in\cs$ is an \emph{atom of $\mu$} or a \emph{$\mu$-atom}, denoted $X\in\atom_\mu$, if $\mu(\{X\})>0$.  Kautz proved the following:

\begin{lemma}[Kautz \cite{Kautz1991}]\label{lem-kautz}
$X\in\cs$ is computable if and only if $X$ is an atom of some computable measure.
\end{lemma}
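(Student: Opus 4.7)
The plan is to handle the two directions separately. For the forward implication, I would simply observe that the Dirac measure $\delta_X$ defined in the excerpt is computable when $X$ is computable: given $\sigma\in\str$, computing $X \uh |\sigma|$ decides whether $\sigma\prec X$, so $\delta_X(\sigma)\in\{0,1\}$ is uniformly computable in $\sigma$. Trivially $X$ is an atom of $\delta_X$ since $\delta_X(\{X\})=1$.

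For the backward implication, suppose $X\in\atom_\mu$ with $r:=\mu(\{X\})>0$ and $\mu$ computable. My plan is to locate $X$ inside a finite $\Pi^0_1$ class, from which its computability will follow. I would pick any rational $q$ with $0<q<r$, and define
\[
T_q \;=\; \{\sigma\in\str : \mu(\sigma)\geq q\}.
\]
This set is downward-closed under $\preceq$ because $\mu$ is monotone on the prefix order, and the computability of $\mu$ makes $\str\setminus T_q$ c.e., so $T_q$ is a co-c.e. tree and $[T_q]$ is $\Pi^0_1$. Since $\mu(Y \uh n)$ decreases to $\mu(\{Y\})$, the class $[T_q]$ equals $\{Y:\mu(\{Y\})\geq q\}$, whose elements contribute disjoint mass to the unit total $\mu(\cs)=1$; hence $|[T_q]|\leq 1/q$. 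The inequality $\mu(X\uh n)\geq r>q$ for all $n$ places $X$ in $[T_q]$.

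To conclude, I would appeal to the standard fact that the elements of a finite $\Pi^0_1$ class are all computable: each such element is necessarily isolated in the class, and an isolated point of a $\Pi^0_1$ class $[T]$ is computable by the familiar ``dead-end pruning'' argument in which one approximates the co-c.e. tree $T$ and, at each fixed level $n$, discards a candidate string once all its sufficiently long extensions have been enumerated out of $T$; eventually only the prefix of the isolated point remains. Applied to $X\in[T_q]$ this yields the computability of $X$.

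The one subtlety I foresee is that the argument is non-uniform: we do not know the witnessing $q$ effectively, only that it exists. This is harmless, however, because the statement merely asserts the existence of some algorithm computing $X$, not uniform computability in $\mu$ or in any code for $r$. The main technical content is really the mass-counting bound $|[T_q]|\leq 1/q$ combined with the classical computability of isolated paths of $\Pi^0_1$ classes.
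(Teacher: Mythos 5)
Your proof is correct. The paper cites Kautz's thesis without reproducing an argument, so there is no in-text proof to compare against; what you give is the standard (and, as far as I am aware, Kautz's) proof: the Dirac measure $\delta_X$ for the easy direction, and for the converse, trapping $X$ in the finite $\Pi^0_1$ class $[T_q]=\{Y:\mu(\{Y\})\geq q\}$ for a rational $q<\mu(\{X\})$ and invoking the computability of isolated members of $\Pi^0_1$ classes. One cosmetic point: the paper's definition of a computable measure does not literally require $\mu(\cs)=1$, but any computable measure has $\mu(\cs)$ equal to a computable (hence finite) real, and the same counting gives $|[T_q]|\leq\mu(\cs)/q$, so your argument goes through verbatim.
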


There is a close connection between computable measures and 
 a certain class of Turing functionals.  Recall that a \emph{Turing functional} $\Phi:\subseteq\cs\rightarrow\cs$ may be defined from a c.e.\ set $S_\Phi$ of pairs of strings $(\sigma,\tau)$ such that 
if $(\sigma,\tau),(\sigma',\tau')\in S_\Phi$ and $\sigma\preceq\sigma'$, then $\tau\preceq\tau'$ or $\tau'\preceq\tau$.  For a Turing functional $\Phi$, we will always include $(\emptystring,\emptystring)\in S_\Phi$.

For each $\sigma\in\str$, we define $\Phi^\sigma$ to be the maximal string  in $\{\tau: (\exists \sigma'\preceq\sigma)((\sigma',\tau)\in S_\Phi)\}$ in the order given by $\preceq$.  To obtain a map defined on $\cs$ from the c.e.\ set of pairs $\Phi$, for each $X\in\cs$, we let $\Phi^X$ be the maximal $y\in\str\cup\cs$ in the order given by $\preceq$  such that $\Phi^{X \uh n}$ is a prefix of~$y$ for all~$n$. We will thus set
$\dom(\Phi)=\{X\in\cs:\Phi^X\in\cs\}$.  When $\Phi^X\in\cs$, we will often write $\Phi^X$ as $\Phi(X)$ to emphasize the functional $\Phi$ as a map from~$\cs$ to~$\cs$.  We also use the notation $\Phi^X \uh n \downarrow$ to emphasize that $\Phi^X$ has length at least~$n$. 
For $\tau \in \str$ let $\Phi^{-1}(\tau)$ be the set 
$\{ \sigma\in \str : \exists \tau' \succeq \tau \colon (\sigma,\tau')\in S_\Phi\}$.  In particular, by our above convention, we have $\Lambda\in\Phi^{-1}(\Lambda)$. Similarly, for $S \subseteq \str$ we define $\Phi^{-1}(S) = \bigcup_{\tau \in S} \Phi^{-1}(\tau)$. When $\mathcal{A}$ is a subset of $\cs$, we denote by $\Phi^{-1}(\mathcal{A})$ the set $\{X\in \dom(\Phi):\Phi(X) \in \mathcal{A}\}$. Note in particular that $\Phi^{-1}(\llb\tau\rrb)
= \llb\Phi^{-1}(\tau)\rrb \cap \dom(\Phi)$.\\

\begin{remark}\label{rmk-functionals-measures}
The Turing functionals that induce computable measures are precisely the \emph{almost total} Turing functionals, where a Turing functional $\Phi$ is almost total if 
\[
\lambda(\mathsf{dom}(\Phi))=1.
\]
Given an almost total Turing functional $\Phi$, the measure induced by $\Phi$, denoted $\lambda_\Phi$, is defined by
\[
\lambda_\Phi(\sigma)=\lambda(\llb\Phi^{-1}(\sigma)\rrb)
=\lambda(\{X:\Phi^X\succeq\sigma\}).
\]
It is not difficult to verify that $\lambda_\Phi$ is a computable measure.  Moreover, one can easily show that given a computable probability measure $\mu$, there is some almost total functional~$\Phi$ such that $\mu=\lambda_\Phi$.
\end{remark}

\subsection{Left-c.e.\ semi-measures}\label{subsec:semi-measures}

We consider semi-measures on both $\str$ and $\cs$.  In general, semi-measures behave like defective probability measures, as they need not be additive.

A \emph{discrete semi-measure} is a map $m:\str\rightarrow[0,1]$ such that $\sum_{\sigma\in\str}m(\sigma)\leq 1$. 
Moreover, if $S$ is a set of strings, then $m(S)$ is defined to be $\sum_{\sigma \in S} m(\sigma)$.  Henceforth, we will restrict our attention to the class of left-c.e.\ discrete semi-measures, where a discrete semi-measure $m$ is \emph{left-c.e.}\ if there is a computable function $\widetilde m : \str \times \N \rightarrow \binrat$, non-decreasing in its second argument such that for all~$\sigma$:
\[
\lim_{i \rightarrow +\infty} \widetilde m(\sigma,i) = m(\sigma).
\]
Levin showed that there is a \emph{universal} left-c.e.\ discrete semi-measure $\m$; that is, for every left-c.e.\ discrete semi-measure $m$, there is some constant $c$ such that $m\leq c\cdot \m$.

This universal discrete semi-measure $\m$ is closely related to the notion of prefix-free Kolmogorov complexity.
Recall that $\K(\sigma)$ denotes the prefix-free Kolmogorov complexity of~$\sigma$, i.e.\
\[
\K(\sigma)=\min\{|\tau|:U(\tau)=\sigma\},
\]
where $U$ is a universal prefix-free Turing machine.  Then by the coding theorem (see \cite[Theorem 3.9.4]{DowneyH2010}), we have $\K(\sigma)=-\log \m(\sigma)+O(1)$.  In particular, since $\K(n)\leq^+ 2\log(n)$, where $\K(n)$ is simply $\K(1^n)$, it follows that $\m(n)\geq^\times n^{-2}$, a fact we will make use of below.

A \emph{continuous semi-measure} is a map $\rho:\str\rightarrow[0,1]$ satisfying:
\begin{itemize}
\item[(i)] $\rho(\emptystring) = 1$ and 
\item[(ii)] $\rho(\sigma)\geq\rho(\sigma0)+\rho(\sigma1)$.
\end{itemize}  
This definition is closely related to the definition of a supermartingale (as defined in \cite{Nies2009} or \cite{DowneyH2010}).

If $S$ is a set of strings, then $\rho(S)$ denotes the sum $\sum_{\sigma \in S} \rho(\sigma)$.  As in the case of discrete semi-measures, we will restrict our attention to the class of left-c.e.\ continuous semi-measures (the values of which are effectively approximable from below as defined above in the case of discrete left-c.e.\ semi-measures).

One particularly important property of left-c.e.\ continuous semi-measures is their connection to Turing functionals.  Just as computable measures are precisely the measures that are induced by almost total Turing functionals (as discussed at the end of the previous section), left-c.e.\ continuous semi-measures are precisely the continuous semi-measures that are induced by Turing functionals:

\begin{theorem}[Levin, Zvonkin \cite{LevinZ1970}]\label{thm-MachinesInduceSemiMeasures}{\ }
\begin{itemize}
\item[(i)] For every Turing functional $\Phi$, the function $\lambda_\Phi(\sigma)=\lambda(\llb\Phi^{-1}(\sigma)\rrb)
=\lambda(\{X:\Phi^X\succeq\sigma\})$ is a left-c.e.\ continuous semi-measure.
\item[(ii)] For every left-c.e.\ continuous semi-measure $\rho$, there is a Turing functional $\Phi$ such that $\rho=\lambda_\Phi$. 
\end{itemize}
\end{theorem}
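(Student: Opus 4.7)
For \textbf{(i)}, the plan is a direct verification. That $\lambda_\Phi$ is a semi-measure follows from two observations: $\lambda_\Phi(\emptystr) = \lambda(\dom(\Phi)) \leq 1$, and for any $\sigma$ the sets $\{X : \Phi^X \succeq \sigma 0\}$ and $\{X : \Phi^X \succeq \sigma 1\}$ are disjoint subsets of $\{X : \Phi^X \succeq \sigma\}$ (a single string $\Phi^X$ cannot extend both $\sigma 0$ and $\sigma 1$), so the countable additivity of $\lambda$ delivers the inequality $\lambda_\Phi(\sigma) \geq \lambda_\Phi(\sigma 0) + \lambda_\Phi(\sigma 1)$. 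To see that $\lambda_\Phi$ is left-c.e., approximate $\Phi$ by its stage-$s$ enumeration $\Phi_s$, a finite set of input-output pairs. Then $\Phi_s^{-1}(\sigma)$ is a finite subset of $\str$, its cylinder measure $\lambda(\llb \Phi_s^{-1}(\sigma)\rrb)$ is a computable dyadic rational, and monotonicity $\Phi_s \subseteq \Phi_{s+1}$ yields a non-decreasing computable sequence with supremum $\lambda_\Phi(\sigma)$.

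For \textbf{(ii)}, given a lower semi-computable $\rho$, I would build $\Phi$ in stages. First I would refine the given rational approximation into a non-decreasing computable sequence $(\rho_s)$ of finitely-supported \emph{semi-measures} converging pointwise to $\rho$, where each transition $\rho_s \to \rho_{s+1}$ adds a dyadic increment $2^{-n}$ to $\rho(\sigma_0)$ for a single string $\sigma_0$ and propagates this increment up through every prefix of $\sigma_0$ (so that the semi-measure inequalities remain satisfied throughout the tree). Throughout the construction I would maintain the invariant that $\Phi_s$ is a finite monotone labeling $\ell_s$ of a finite prefix-free set of input strings by output strings, with $\lambda_{\Phi_s}(\sigma) = \rho_s(\sigma)$ for every $\sigma$ in the current tree. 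To absorb an increment of size $2^{-n}$ at $\sigma_0$, I would exploit the slack $\rho_s(\sigma') - \rho_s(\sigma' 0) - \rho_s(\sigma' 1) \geq 0$ at each prefix $\sigma'$ of $\sigma_0$ to locate a cylinder $\llb \tau \rrb$ of measure $2^{-n}$ whose points $X$ currently satisfy $\ell_s(X \uh |\tau|) \prec \sigma_0$; I would then extend the labeling by adding the pair $(\tau, \sigma_0)$, which pushes these oracles onward to produce outputs extending $\sigma_0$.

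The main obstacle will be the bookkeeping that guarantees the required cylinder always exists at the stage it is needed. This is precisely why the normalization to dyadic increments is essential: it ensures that at every prefix $\sigma'$ of $\sigma_0$ the current slack is a dyadic rational of the form $k \cdot 2^{-n_0}$, so that after at most finitely many subdivisions of the already-allocated cylinders one can carve out a piece of exactly the correct dyadic measure, while automatically preserving the prefix-compatibility condition required of a Turing functional (since we only extend labels along existing input prefixes, never contradicting them). Taking $\Phi = \bigcup_s \Phi_s$ then yields a c.e.\ set of pairs that satisfies the defining monotonicity condition of a Turing functional, and passing the invariant to the limit gives $\lambda_\Phi(\sigma) = \lim_s \rho_s(\sigma) = \rho(\sigma)$ for every $\sigma \in \str$.
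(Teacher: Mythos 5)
The paper does not supply a proof of this theorem; it is stated as background and attributed to Levin and Zvonkin~\cite{LevinZ1970}, so there is no in-paper proof to compare against. Evaluating your proposal on its own terms: the overall strategy is the standard one and is essentially correct, but there is one concrete error in part (i) and some genuine loose ends in part (ii).

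In part (i), the claim $\lambda_\Phi(\emptystring) = \lambda(\dom(\Phi))$ is wrong. By definition, $\lambda_\Phi(\emptystring) = \lambda(\{X : \Phi^X \succeq \emptystring\})$, and \emph{every} $X$ satisfies $\Phi^X \succeq \emptystring$ (the string $\Phi^X$ is always at least the empty string, whether or not $X$ lies in $\dom(\Phi)$), so this set is all of $\cs$ and $\lambda_\Phi(\emptystring) = 1$. Since the paper's definition of a continuous semi-measure requires $\rho(\emptystring) = 1$ exactly, this is actually the stronger fact you need — the identification with $\lambda(\dom(\Phi))$ would only give you $\leq 1$, which would not suffice. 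The rest of part (i) — disjointness of the children sets and the monotone dyadic approximation via $\Phi_s$ — is fine.

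For part (ii), your construction sketch is the right one, but the propagation step is ambiguously described in a way that matters. If you literally add $2^{-n}$ to $\rho_s(\sigma')$ for \emph{every} prefix $\sigma' \preceq \sigma_0$ on each increment, you risk overshooting $\rho(\sigma')$ at intermediate nodes when $\rho$ has slack there, and in particular could push $\rho_s(\emptystring)$ past $1$. The correct bookkeeping is to increase $\rho_s$ at a prefix $\sigma'$ only by enough to restore the semi-measure inequality $\rho_s(\sigma') \geq \rho_s(\sigma'0) + \rho_s(\sigma'1)$ (equivalently, to keep $\rho_s(\sigma') \leq \rho(\sigma')$ at all times; the inequality $\rho(\sigma') \geq \rho(\sigma'0) + \rho(\sigma'1)$ guarantees that this never requires exceeding the target). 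Your parenthetical ``so that the semi-measure inequalities remain satisfied'' suggests you may have this in mind, but as written the step is not forced, and the claim that ``the required cylinder always exists at the stage it is needed'' is exactly the nontrivial invariant one must verify; it hinges on the relation $\rho_s(\sigma') - \rho_s(\sigma'0) - \rho_s(\sigma'1) = \lambda(\{X : \Phi_s^X = \sigma'\})$, which identifies the slack at $\sigma'$ with the measure of inputs currently parked at $\sigma'$, available to be pushed forward. Making this identity explicit and ensuring prefixes are processed before their extensions would close the gap.
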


As there is a universal left-c.e.\ discrete semi-measure, so too is there a universal left-c.e.\ continuous semi-measure.  That is, there exists a left-c.e.\ continuous semi-measure $\M$ such that, for every left-c.e.\ continuous semi-measure~$\rho$, there exists a $c\in\N$ such that $\rho\leq c\cdot \M$.  One way to obtain a universal left-c.e.\ continuous semi-measure is to effectively list all left-c.e.\ continuous semi-measures $(\rho_e)_{e \in \N}$ (which can be obtained from an effective list of all Turing functionals by appealing to Theorem~\ref{thm-MachinesInduceSemiMeasures}) and set $\M = \sum_{e \in \N} 2^{-e-1} \rho_e$.  Alternatively, one can induce it by means of a universal Turing functional: Let $(\Phi_i)_{i\in\N}$ be an effective enumeration of all Turing functionals.  Then the functional $\widehat\Phi$ such that
\[
\widehat\Phi(1^e0X)=\Phi_e(X)
\]
\label{page:phi-hat}for every $e\in\N$ and $X\in\cs$ is a universal Turing functional and we can set $\M=\lambda_{\widehat\Phi}$. One can readily verify that $\M$ is a universal left-c.e.\ continuous semi-measure, which is sometimes called~\emph{a priori probability} (see for example G\'acs~\cite{Gacs-notes} for the basic properties of~$\M$).

Another feature of continuous semi-measures that we will make use of throughout this study is that there is a canonical measure on $\cs$ that can be obtained from a continuous semi-measure.
To motivate the definition of this measure, it is helpful to think of a semi-measure as a network flow through the full binary tree $\str$ seen as a directed graph (see, for instance, \cite{LevinV1977} or \cite{Vyugin1982}).  First we give the node at the root of the tree flow equal to $1$ (corresponding to the condition $\rho(\emptystring)=1$). Some amount of this flow at each node $\sigma$ is passed along to the node corresponding to $\sigma0$, some is passed along to the node corresponding to $\sigma1$, and, potentially, some of the flow is lost (corresponding to the condition that $\rho(\sigma)\geq\rho(\sigma0)+\rho(\sigma1)$).  We obtain a measure $\overline\rho$ from $\rho$ if we ignore all of the flow that is lost below a given node and just consider the behavior of the flow that never leaves the network below this node. We will refer to $\overline\rho$ as the canonical measure derived from $\rho$.  This can be formalized as follows.

\begin{definition}
Let $\rho$ be a semi-measure.  The \emph{canonical measure obtained from $\rho$} is defined to be 
\[
\overline\rho(\sigma):=\inf_{n\geq |\sigma|}\sum_{\tau\succeq\sigma\;\&\;|\tau|=n}\rho(\tau) = \lim_{n\rightarrow \infty} \sum_{\tau\succeq\sigma\;\&\;|\tau|=n}\rho(\tau).
\]
\end{definition}

Several important facts about these canonical measures are the following:

\begin{proposition}\label{prop-rhobar-properties}
Let $\rho$ be a semi-measure and let $\overline{\rho}$ be the canonical measure obtained from~$\rho$.
\begin{itemize}
\item[(i)] $\overline{\rho}$ is the largest measure $\mu$ such that $\mu\leq \rho$.  
\item[(ii)] If $\rho(\sigma)=\lambda(\{X:\Phi^X\succeq\sigma\})$,
then $\overline\rho(\sigma)=\lambda(\{X \in \dom(\Phi): \Phi^X\succeq\sigma\})$.
\end{itemize}
\end{proposition}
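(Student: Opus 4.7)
The plan is a three-step calculation: confirm that the infimum defining $\overline\rho$ is actually a measure dominated by $\rho$; establish maximality for (i) by comparing any candidate measure to $\rho$ level by level; and for (ii) rewrite each approximating sum as the $\lambda$-measure of a finite union of disjoint preimages under $\Phi$ and pass to the limit using continuity of $\lambda$ from above.

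For the first step, set $S_n(\sigma) := \sum_{|\tau|=n,\,\tau \succeq \sigma} \rho(\tau)$. The semi-measure inequality $\rho(\tau) \geq \rho(\tau 0) + \rho(\tau 1)$ gives
\[
S_{n+1}(\sigma) = \sum_{|\tau|=n,\,\tau \succeq \sigma} \bigl(\rho(\tau 0) + \rho(\tau 1)\bigr) \leq S_n(\sigma),
\]
so the sequence $(S_n(\sigma))_{n \geq |\sigma|}$ is non-increasing and its infimum coincides with its limit, namely $\overline\rho(\sigma)$. Taking $n = |\sigma|$ shows $\overline\rho(\sigma) \leq \rho(\sigma)$. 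Splitting $S_n(\sigma)$ for $n > |\sigma|$ into the pieces $\tau \succeq \sigma 0$ and $\tau \succeq \sigma 1$ and taking $n \to \infty$ in each yields $\overline\rho(\sigma) = \overline\rho(\sigma 0) + \overline\rho(\sigma 1)$, so $\overline\rho$ is indeed a measure.

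Part (i) is then immediate: if $\mu$ is any measure with $\mu \leq \rho$, countable additivity of $\mu$ gives
\[
\mu(\sigma) = \sum_{|\tau|=n,\,\tau \succeq \sigma} \mu(\tau) \leq S_n(\sigma)
\]
for every $n \geq |\sigma|$, so letting $n \to \infty$ yields $\mu(\sigma) \leq \overline\rho(\sigma)$.

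For part (ii), the key observation is that the sets $\{X : \Phi^X \succeq \tau\}$ for distinct strings $\tau$ of length $n$ are pairwise disjoint, since $\Phi^X$ has at most one prefix of any given length. Their union over $\tau \succeq \sigma$ with $|\tau| = n$ is therefore $A_n := \{X : \Phi^X \succeq \sigma,\ |\Phi^X| \geq n\}$, giving $S_n(\sigma) = \lambda(A_n)$. The $A_n$ are nested decreasing, so continuity of $\lambda$ from above yields $\overline\rho(\sigma) = \lim_n \lambda(A_n) = \lambda\bigl(\bigcap_n A_n\bigr)$. The only step requiring a small amount of care — and which I expect to be the only subtle point — is identifying $\bigcap_n A_n$ with $\{X \in \dom(\Phi) : \Phi^X \succeq \sigma\}$: this is where the gap between $\rho(\sigma)$ and $\overline\rho(\sigma)$ resides, corresponding exactly to those $X$ for which $\Phi^X$ is a finite string extending $\sigma$ rather than a full element of $\cs$, and it reduces immediately to the definition of $\dom(\Phi)$.
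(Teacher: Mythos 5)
Your proof is correct and complete. The paper itself declines to prove this: it says part (i) is straightforward and cites \cite{BienvenuHPS2014} for part (ii), so there is no in-text argument to compare against. Your route is the natural one and each step checks out: the monotonicity of $S_n(\sigma)$ and the level-wise splitting gives both that $\overline\rho$ is a measure and that it dominates any measure $\mu \leq \rho$; and in (ii) the disjointness of $\{X : \Phi^X \succeq \tau\}$ over distinct $\tau$ of the same length (since $\Phi^X$ has a unique length-$n$ prefix when $|\Phi^X| \geq n$) correctly identifies $S_n(\sigma)$ with $\lambda(A_n)$ for $A_n = \{X : \Phi^X \succeq \sigma,\ |\Phi^X| \geq n\}$, and $\bigcap_{n \geq |\sigma|} A_n = \{X \in \dom(\Phi) : \Phi^X \succeq \sigma\}$ exactly as you say, since requiring $|\Phi^X| \geq n$ for all $n$ is precisely the statement $\Phi^X \in \cs$. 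Continuity of $\lambda$ from above then closes the argument.
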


Thus, in replacing $\rho$ with $\overline\rho$, this amounts to restricting the Turing functional~$\Phi$ that induces~$\rho$ to those inputs on which~$\Phi$ is total.  The proof of (i) is straightforward; for a proof of (ii), see \cite{BienvenuHPS2014}.

\begin{remark}\label{rmk-Mbar}
Using Proposition \ref{prop-rhobar-properties}(ii) and the universality of $\M$, one can readily verify that for every left-c.e.\ continuous semi-measure $\rho$, there is some $c\in\N$ such that $\overline\rho \leq c\cdot\Mb$.  Thus $\Mb$ can be seen as a measure that is universal for the class of canonical measures obtained from some left-c.e.\ continuous semi-measure (a class that contains all computable measures).
\end{remark}

\subsection{Notions of algorithmic randomness}

The primary notion of algorithmic randomness that we will consider here is Martin-L\"of randomness.  Although the standard definition is given in terms of the Lebesgue measure, we will consider Martin-L\"of randomness with respect to any computable measure.

\begin{definition}
Let $\mu$ be a computable measure on $\cs$.

\begin{itemize}
\item[(i)] A \emph{$\mu$-Martin-L\"of test} is a sequence $\{\mathcal{U}_i\}_{i\in\N}$ of uniformly effectively open subsets of $\cs$ such that for each $i$,
\[
\mu(\mathcal{U}_i)\leq 2^{-i}.
\]
\item[(ii)] $X\in\cs$ passes the $\mu$-Martin-L\"of test $(\mathcal{U}_i)_{i\in\N}$ if $X\notin\bigcap_{i \in \N}\mathcal{U}_i$.
\item[(iii)] $X\in\cs$ is \emph{$\mu$-Martin-L\"of random}, denoted $X\in\MLR_\mu$, if $X$ passes every $\mu$-Martin-L\"of test. \end{itemize}
\end{definition}

When $\mu$ is the uniform (or Lebesgue) measure $\lambda$, we will simply write $\MLR$.  An important feature of Martin-L\"of randomness is the existence of a universal test: For every computable measure $\mu$, there is a single $\mu$-Martin-L\"of test $\{\hat{\mathcal{U}}_i\}_{i\in\N}$, having the property that $X\in\MLR_\mu$ if and only if $X\notin\bigcap_{i\in\N}\hat{\mathcal{U}}_i$.

\begin{remark}\label{rmk-atom}
As we saw earlier, some computable measures have atoms, such as the Dirac measure $\delta_X$ concentrated on some computable sequence $X$.  Moreover, given a computable measure $\mu$, if $X$ is a $\mu$-atom, it immediately follows that $X\in\MLR_\mu$.
\end{remark}

Four additional notions of algorithmic randomness that will be considered in this study are difference randomness, Kurtz randomness, weak 2-randomness, and computable randomness.

\begin{definition}
\begin{itemize}
\item[(i)] A \emph{difference test} is a computable sequence $\{(\mathcal{U}_i,\mathcal{V}_i)\}_{i\in\N}$ of pairs of $\Sigma^0_1$ classes such that for each $i$,
\[
\lambda(\mathcal{U}_i\setminus\mathcal{V}_i)\leq 2^{-i}.
\]
\item[(ii)] A sequence $X\in\cs$ \emph{passes a difference test} $\{(\mathcal{U}_i,\mathcal{V}_i)\}_{i\in\N}$ if $X\notin\bigcap_i(\mathcal{U}_i\setminus\mathcal{V}_i)$.
\item[(iii)] $X\in\cs$ is \emph{difference random} if $X$ passes every difference test. We denote by~$\DiffR$ the class of difference random reals. 
\end{itemize}

\end{definition}

Franklin and Ng proved the following remarkable theorem about difference randomness:

\begin{theorem}[Franklin and Ng~\cite{FranklinN2011}]\label{thm:franklin-ng}
A sequence~$X$ is difference random if and only if $X$ is Martin-L\"of random and $X \not\geq_T \emptyset'$. 
\end{theorem}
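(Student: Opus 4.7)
The theorem asserts an equivalence, which I would prove by separately handling the two directions, each by contrapositive. That difference randomness implies Martin-L\"of randomness is immediate: any ML-test $(\mathcal{U}_i)_{i\in\N}$ forms a difference test by taking $\mathcal{V}_i = \emptyset$, so $\DiffR \subseteq \MLR$. The substantive half of the forward implication is that a difference random $X$ cannot compute $\emptyset'$. Suppose by contrapositive that $X \in \MLR$ and $\Phi^X = \emptyset'$ for some Turing functional $\Phi$. Then $\emptyset'$ is not a $\lambda_\Phi$-atom (by Lemma~\ref{lem-kautz}, since $\emptyset'$ is not computable), whence $\lambda(\{Y : \Phi^Y = \emptyset'\}) = 0$ and consequently $\lambda(\{Y : \Phi^Y \uh n = \emptyset' \uh n\}) \to 0$ as $n \to \infty$. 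I would construct a difference test $(\mathcal{U}_i, \mathcal{V}_i)_{i\in\N}$ by placing $\llb\sigma\rrb$ into $\mathcal{U}_i$ at stage $s$ as soon as $\Phi^\sigma[s]$ converges far enough for its output to match $\emptyset'[s]$ on a sufficient prefix, and placing $\llb\sigma\rrb$ into $\mathcal{V}_i$ at a later stage $s'$ if $\emptyset'[s']$ then reveals this output to be incorrect (that is, a bit of the c.e.\ set $\emptyset'$ enumerates above those that $\Phi^\sigma$ committed to). The residual set $\mathcal{U}_i \setminus \mathcal{V}_i$ consists of those $Y$ whose $\Phi^Y$ genuinely matches $\emptyset'$ on the relevant prefix, and has measure tending to $0$. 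Since $X$ lies in every $\mathcal{U}_i \setminus \mathcal{V}_i$, $X$ fails the test, contradicting difference randomness.

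For the reverse direction, I would again argue by contrapositive: suppose $X \in \MLR$ fails a difference test $(\mathcal{U}_i, \mathcal{V}_i)_i$, and show $X \geq_T \emptyset'$. For each $i$, let $\sigma_i$ be the shortest prefix of $X$ with $\llb\sigma_i\rrb \subseteq \mathcal{U}_i$, findable from $X$ since $\mathcal{U}_i$ is $\Sigma^0_1$. Since $\lambda(\mathcal{U}_i \setminus \mathcal{V}_i) \leq 2^{-i}$ while $X \notin \mathcal{V}_i$, we obtain $\lambda(\mathcal{V}_i \cap \llb\sigma_i\rrb) \geq \lambda(\llb\sigma_i\rrb) - 2^{-i}$. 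Passing to a universal difference test obtained by suitably encoding the approximation of $\emptyset'$ into the enumeration order of $\mathcal{V}_i$, $X$ can reconstruct $\emptyset'$ as follows: using the known lower bound on $\lambda(\mathcal{V}_i \cap \llb\sigma_i\rrb)$, $X$ can wait for the stage $s$ at which $\lambda(\mathcal{V}_i[s] \cap \llb\sigma_i\rrb)$ first attains this bound, and the resulting stage information decodes a bit of the halting problem.

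The principal obstacle is obtaining the effective bound $\lambda(\mathcal{U}_i \setminus \mathcal{V}_i) \leq 2^{-i}$ in the forward direction from the merely non-effective fact that $\lambda(\{Y : \Phi^Y = \emptyset'\}) = 0$. The length $n_i$ at which this measure first falls below $2^{-i}$ is only limit-computable, not computable, so the test cannot select $n_i$ up front. The fix is to build $\mathcal{U}_i$ and $\mathcal{V}_i$ dynamically so that any ``premature'' placement into $\mathcal{U}_i$, made while the measure bound is not yet visibly satisfied, is absorbed into $\mathcal{V}_i$ once the bound is eventually witnessed; keeping the total measure of $\mathcal{U}_i \setminus \mathcal{V}_i$ within $2^{-i}$ requires careful coordination between the c.e.\ approximation of $\emptyset'$, the left-c.e.\ approximation of $\lambda_\Phi$, and the enumeration of $\mathcal{V}_i$. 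In the reverse direction, the subtler point is that an arbitrary difference test need not encode $\emptyset'$ explicitly, so one must pass through a universal difference test to extract the Turing reduction.
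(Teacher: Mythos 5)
The paper does not prove this theorem; it cites Franklin and Ng \cite{FranklinN2011} and uses the result as a black box. So there is no paper proof to compare against, and I will assess your sketch on its own terms.

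Your overall decomposition (contrapositive in each direction, $\DiffR\subseteq\MLR$ for free by taking $\V_i=\emptyset$) is the right skeleton, but both substantive halves rest on steps you name rather than supply, and those steps are exactly where the theorem's difficulty lives. In the forward direction, your candidate test yields $\U_i\setminus\V_i$ essentially equal to $\{Y:\Phi^Y\!\uhr N_i\downarrow\text{ and }\Phi^Y\!\uhr N_i=\emptyset'\!\uhr N_i\}$, and you correctly identify that one cannot compute a length $N_i$ making this set have measure $\le 2^{-i}$; but the proposed ``fix'' --- absorbing premature entries into $\V_i$ once the bound is ``visibly satisfied'' --- does not work as stated, because the condition $\lambda(\Phi^{-1}(\llb\emptyset'\!\uhr n\rrb))\le 2^{-i}$ is not a c.e.\ event and is never ``witnessed'' at any finite stage. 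The known proofs get around this by working with $\Omega$ in place of $\emptyset'$ and exploiting that $\Omega$ is a \emph{left-c.e.}\ real with a computable increasing approximation $\Omega_s$: one charges each entry into $\U_i$ against the increases $\Omega_{s+1}-\Omega_s$, whose total is bounded by $1$, which is what actually provides the effective measure bound. Nothing in your sketch replaces that accounting. (A smaller point: invoking Lemma~\ref{lem-kautz} to conclude $\emptyset'$ is not a $\lambda_\Phi$-atom is shaky since $\lambda_\Phi$ need not be a computable probability measure; the clean reference is Sacks' theorem that the upper cone of a non-computable set is null.)

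In the reverse direction, your measure argument --- that from $\llb\sigma_i\rrb\subseteq\U_i$, $X\notin\V_i$ and $\lambda(\U_i\setminus\V_i)\le 2^{-i}$ one gets $\lambda(\V_i\cap\llb\sigma_i\rrb)\ge 2^{-|\sigma_i|}-2^{-i}$, and that $X$ can wait for this to be witnessed in the enumeration --- is a genuine and correct observation, but it produces stage information, not $\emptyset'$. You say one should ``pass through a universal difference test obtained by suitably encoding the approximation of $\emptyset'$ into the enumeration order of $\V_i$,'' but this encoding is the entire content of the direction and is not given. One must construct a single difference test so that, for each $k$, the moment $k$ enters $\emptyset'$ is recoverable from the settling time of $\V_n$ on $X$'s true prefix, while simultaneously keeping $\lambda(\U_n\setminus\V_n)\le 2^{-n}$; showing this can be arranged, and that $X$ indeed settles late enough to see all the coding events it needs, is a delicate construction. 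As written, then, both directions identify the right obstacle but leave it unresolved; this is a sketch of a strategy, not a proof.
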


Recall that a sequence $X$ has \emph{$\PA$ degree} if $X$ can compute a consistent completion of Peano arithmetic (hereafter, $\PA$).  This is equivalent to requiring that $X$ computes a total function extending a universal partial computable $\{0,1\}$-valued function, a fact that will be useful in Section \ref{sec:examples}.  A related result is the following:

\begin{theorem}  [Stephan \cite{Stephan2002}]\label{thm:diff-pa}
A Martin-L\"of random sequence $X$ has $\PA$ degree if and only if $X\geq_T\emptyset'$.
\end{theorem}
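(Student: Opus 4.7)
My plan has two parts. The easy direction $X \geq_T \emptyset'$ implies $X$ has $\PA$ degree is purely classical and uses no randomness: the class $\mathcal{P}$ of consistent completions of $\PA$ is a nonempty $\Pi^0_1$ class whose leftmost path can be found by $\emptyset'$ via the standard left-first search, giving a $\Delta^0_2$ completion of $\PA$ that $X \geq_T \emptyset'$ computes.

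For the interesting direction, I would first invoke Theorem~\ref{thm:franklin-ng}: since $X$ is ML-random, showing $X \geq_T \emptyset'$ is equivalent to showing $X$ is not difference random. Suppose $\Phi_e^X \in \mathcal{P}$ and put $Y = 1^e 0 X$. By the definition of the universal functional $\widehat{\Phi}$ in Section~\ref{subsec:semi-measures}, $\widehat{\Phi}^Y = \Phi_e^X \in \mathcal{P}$; since difference randomness is invariant under prepending a fixed finite string, it is enough to produce a difference test that captures $Y$.

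Let $T$ be the canonical co-c.e.\ tree with $[T] = \mathcal{P}$. The key technical step would be to establish
\[
(\star)\quad \text{there is a computable } f:\N\to\N \text{ with } \M(T_{f(k)}) \leq 2^{-k} \text{ for all } k,
\]
where $\M = \lambda_{\widehat{\Phi}}$ gives $\M(T_n) = \lambda\bigl(\{Z : \widehat{\Phi}^Z \uh n \in T_n\}\bigr)$. Given $(\star)$, I would set
\[
\mathcal{U}_k = \bigl\{Z : \widehat{\Phi}^Z \uh f(k) \downarrow\bigr\}, \qquad
\mathcal{V}_k = \bigcup_{\sigma \in 2^{f(k)} \setminus T_{f(k)}} \bigl\{Z : \widehat{\Phi}^Z \succeq \sigma\bigr\}.
\]
Both are effectively open (the union defining $\mathcal{V}_k$ is c.e.\ because $T$ is co-c.e.), and one verifies $\mathcal{U}_k \setminus \mathcal{V}_k = \{Z : \widehat{\Phi}^Z \uh f(k) \in T_{f(k)}\}$, which has Lebesgue measure at most $\M(T_{f(k)}) \leq 2^{-k}$. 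Since $\widehat{\Phi}^Y \in [T]$, the sequence $Y$ belongs to $\mathcal{U}_k \setminus \mathcal{V}_k$ for every $k$, so $(\mathcal{U}_k, \mathcal{V}_k)_{k \in \N}$ is a difference test capturing $Y$.

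The hard part will be $(\star)$, the \emph{effective} rate of convergence of $\M(T_n)$ to $0$. Non-effective convergence is free from the Jockusch--Soare negligibility of $\mathcal{P}$ together with the fact that $\{Z : \widehat{\Phi}^Z \uh n \in T_n\}$ decreases to $\{Z : \widehat{\Phi}^Z \in \mathcal{P}\}$. Upgrading this to an effective bound is precisely the assertion---to be developed later in this paper---that $\mathcal{P}$ is a \emph{deep} $\Pi^0_1$ class. I would prove it using the self-referential structure of $\PA$: each $\sigma \in T_n$ determines the truth values of all $\Sigma^0_1$ sentences with G\"odel number up to roughly $n$, and a G\"odel/Rosser-type diagonalization turns any persistent lower bound on $\M(T_n)$ into an algorithm producing a completion of $\PA$ with positive a priori probability, contradicting negligibility with enough uniformity to yield the required computable $f$.
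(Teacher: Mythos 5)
Your high-level plan is sound and, in fact, reconstructs exactly how the present paper's later machinery implies Stephan's theorem (the paper itself states Theorem~\ref{thm:diff-pa} as a citation and does not prove it directly). The easy direction is correct: $\emptyset'$ computes the leftmost path of the nonempty $\Pi^0_1$ class~$\sPA$. For the hard direction, combining Theorem~\ref{thm:franklin-ng} with a difference test built from the co-c.e.\ tree of~$\sPA$ is precisely the content of Theorem~\ref{thm:diff-deep} specialized to the PA class, and your pass through the universal functional $\widehat\Phi$ and the reindexed input $Y=1^e0X$ is a legitimate cosmetic variant of that argument. The test construction checks out: $\U_k\setminus\V_k=\{Z:\widehat\Phi^Z\uh f(k)\in T_{f(k)}\}$ has measure $\M(T_{f(k)})\leq 2^{-k}$, and $Y$ lies in every level since $\widehat\Phi^Y\in[T]$.

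The one genuine problem is your sketch of $(\star)$. You frame the depth of~$\sPA$ as ``turn any persistent lower bound on $\M(T_n)$ into an algorithm producing a completion of $\PA$ with positive a priori probability, contradicting negligibility with enough uniformity.'' This cannot work as stated: negligibility (i.e.\ $\M(T_n)\to 0$) is \emph{strictly weaker} than depth, as the paper itself establishes via Theorem~\ref{thm:depth-not-Turing}, so no amount of ``uniformity'' in a contradiction-from-negligibility argument yields the computable rate of convergence. What actually proves Theorem~\ref{thm:levin-stephan} is a direct recursion-theorem construction: one reserves a block of $2^{k+1}$ controllable values of the universal $\{0,1\}$-valued function~$u$ in an interval~$I_k$, and whenever the $\M$-mass of current extensions of $u\uh I_k$ reaches $2^{-k}$, one defines one new value of~$u$ on~$I_k$ to remove at least half that mass from the tree; this can recur at most $2^{k+1}$ times, after which $\M(T_{f(k)})<2^{-k}$ is permanent. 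That is a proactive ``kill-the-mass'' construction, not a reductio, and it never passes through the existence of a completion of $\PA$ as an intermediate object. Your invocation of a G\"odel/Rosser-style self-reference points at the right tool (the recursion theorem), but the mechanism you describe would not deliver the effective modulus, and you would need to rework $(\star)$ along the lines just indicated before the proof is complete.
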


\noindent It follows from the previous two results that a Martin-L\"of random sequence is difference random if and only if it does not have $\PA$ degree.

Another approach to defining randomness is to require that a random sequence avoid all null sets that are definable at some fixed level of syntactic complexity.  For instance, if we take all $\Pi^0_1$ definable null sets or all $\Pi^0_2$ definable null sets, we have the following two notions of randomness, first introduced by Kurtz in \cite{Kurtz1981}.

\begin{definition} Let $X\in\cs$.
\begin{itemize}
\item[(i)] $X$ is \emph{Kurtz random} (or \ \emph{weakly 1-random}) if and only if $X$ is not contained in any $\Pi^0_1$ class of Lebesgue measure~$0$ (equivalently, if and only if it is not contained in any $\Sigma^0_2$ class of measure~$0$).
\item[(ii)] $X$ is \emph{weakly 2-random} if and only if $X$ is not contained in any $\Pi^0_2$ class of Lebesgue measure~$0$.
\end{itemize}
\end{definition}
Let $\KR$ denoted the collection of Kurtz random sequences and $\WtwoR$ denote the collection of weakly 2-random sequences.

The last definition of randomness we will consider in the study is defined in terms of certain effective betting strategies called \emph{martingales}.

\begin{definition}
\begin{itemize}
\item[(i)] A \emph{martingale} is a function $d:\str\rightarrow\mathbb{R}^{\geq 0}$ such that for every $\sigma\in\str$,
\[2d(\sigma)=d(\sigma0)+d(\sigma1).\] 
\item[(ii)] A  martingale $d$ succeeds on $X\in\cs$ if 
\[\limsup_{n\rightarrow\infty}\, d(X\uh n)=+\infty.\]
\item[(iii)] A sequence $X\in\cs$ is \emph{computably random} if there is no computable martingale $d$ that succeeds on $X$. 
\end{itemize}
\end{definition}

\noindent The collection of computably random sequences will be written as $\CR$. The different notions of randomness discussed in this section form a strict hierarchy. Namely, the following relations hold: 
\[
\WtwoR\subsetneq\DiffR\subsetneq\MLR\subsetneq\CR\subsetneq\KR
\]
We note that each of these notions of randomness can be defined with respect to a computable measure $\mu$, similarly to how we defined $\mu$-Martin-L\"of randomness.  Moreover, each of the above notions can be relativized to an oracle $A\in\cs$ in a straightforward manner.
\section{Negligibility and $\tt$-Negligibility}\label{sec:neg}

We are now in a position to define negligibility and $\tt$-negligibility. As discussed in the introduction, the intuitive idea behind negligibility is that a set $\C\subseteq\cs$ is negligible if no member of $\C$ can be produced with positive probability by means of any Turing functional with a random oracle.  Similarly, a set $\C\subseteq\cs$ is $\tt$-negligible if no member of $\C$ can be produced with positive probability by means of any \emph{total} Turing functional with a random oracle.  However, we will primarily work with the following measure-theoretic definition of these two notions.

\begin{definition} Let $\C\subseteq\cs$. 
\begin{itemize}
\item[(i)] $\C$ is \emph{negligible} if $\Mb(\C)=0$. 
\item[(ii)] $\C$ is \emph{tt-negligible} if $\mu(\C)
=0$ for every computable measure~$\mu$.
\end{itemize}
 \end{definition}
 
The intuitive description of negligibility and $\tt$-negligibility given above is justified by the following proposition. For $\C\subseteq\cs$, we define $\C^{\leq_T}$ to be the set $\{Y \in \cs : (\exists X\in\C)[X\leq_T Y]\}$.  Furthermore, $\C^{\leq_{tt}}$ denotes the set $\{Y \in \cs : (\exists X\in\C)[X\leq_{tt} Y]\}.$

\begin{proposition}\label{prop:carac_neglible} Let $\C\subseteq\cs$.
\begin{itemize}
\item[(i)] $\C$ is negligible if and only if $\lambda(\C^{\leq_T})=0$.
\item[(ii)] $\C$ is $\tt$-negligible if and only if $\lambda(\C^{\leq_{tt}})=0$.
\end{itemize}
\end{proposition}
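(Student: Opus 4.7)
My plan is to prove both equivalences by expressing the right-hand sides as countable unions over Turing functionals---all functionals for~(i), total ones for~(ii)---and then invoking the tight correspondence between Turing functionals and, respectively, continuous semi-measures (Theorem~\ref{thm-MachinesInduceSemiMeasures}) or computable measures (Remark~\ref{rmk-functionals-measures}). Concretely, $(\C)^{\leq_T} = \bigcup_{e \in \N} \Phi_e^{-1}(\C)$ and $(\C)^{\leq_{tt}} = \bigcup_{\Phi \text{ total}} \Phi^{-1}(\C)$, so by countable additivity the task reduces in each case to controlling the single measures $\lambda(\Phi^{-1}(\C))$.

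For part~(i), I would work with the specific universal Turing functional $\widehat\Phi$ defined by $\widehat\Phi(1^e0X) = \Phi_e(X)$, so that $\M = \lambda_{\widehat\Phi}$. Proposition~\ref{prop-rhobar-properties}(ii) shows that $\Mb$ and the Borel measure $A \mapsto \lambda(\widehat\Phi^{-1}(A))$ agree on all cylinders, hence they coincide on every Borel class; in particular $\Mb(\C) = \lambda(\widehat\Phi^{-1}(\C))$. Partitioning $\cs$ (up to the $\lambda$-null point $1^\omega$) by the cylinders $\llb 1^e0\rrb$ and observing that $\widehat\Phi^{-1}(\C) \cap \llb 1^e0\rrb$ is exactly $\{1^e0X : X \in \Phi_e^{-1}(\C)\}$, one obtains
\[
\Mb(\C) \;=\; \sum_{e \in \N} 2^{-e-1}\,\lambda\bigl(\Phi_e^{-1}(\C)\bigr),
\]
which vanishes iff each summand does, iff $\lambda\bigl((\C)^{\leq_T}\bigr) = \lambda\bigl(\bigcup_e \Phi_e^{-1}(\C)\bigr) = 0$.

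For part~(ii), the forward direction is routine: if $\Phi$ is total, Remark~\ref{rmk-functionals-measures} tells us $\lambda_\Phi$ is a computable measure and $\lambda_\Phi(\C) = \lambda(\Phi^{-1}(\C))$; so tt-negligibility forces $\lambda(\Phi^{-1}(\C)) = 0$ for every total $\Phi$, yielding $\lambda((\C)^{\leq_{tt}}) = 0$ by countable additivity. For the converse, I would use the classical fact that every computable measure $\mu$ can be realized as $\lambda_\Phi$ for some \emph{total} Turing functional $\Phi$ (arithmetic decoding against the computable cylinder probabilities of $\mu$, with an ad hoc extension on the $\lambda$-null set of oracles lying on cylinder boundaries to ensure totality). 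Granted this, if $\lambda((\C)^{\leq_{tt}}) = 0$ and $\mu$ is any computable measure, pick such a total $\Phi$ and conclude $\mu(\C) = \lambda(\Phi^{-1}(\C)) \leq \lambda((\C)^{\leq_{tt}}) = 0$.

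The main obstacle, in my view, is the representation fact used in~(ii): Remark~\ref{rmk-functionals-measures} as stated only guarantees an \emph{almost} total functional, whereas the tt-reducibility side of the proposition really requires a genuinely total one. I would treat this upgrade as a known lemma to be cited rather than reproved inline. The remaining technical point---extending Proposition~\ref{prop-rhobar-properties}(ii) from cylinders to Borel sets in part~(i)---is routine, since the two candidate measures agree on the generating system of cylinders.
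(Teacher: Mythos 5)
Part~(i) of your argument is sound and, modulo presentation, follows essentially the same route as the paper: both hinge on the universal functional $\widehat\Phi$ and the identification of $\Mb$ with $\overline{\lambda_{\widehat\Phi}}$. Your explicit decomposition $\Mb(\C) = \sum_e 2^{-e-1}\lambda(\Phi_e^{-1}(\C))$ makes the biconditional transparent. The forward direction of~(ii) is likewise fine.

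The backward direction of~(ii), however, rests on a false premise. You invoke a ``classical fact'' that every computable measure $\mu$ arises as $\lambda_\Phi$ for a \emph{total} Turing functional $\Phi$, dismissing the issue at cylinder boundaries as an ad hoc null-set fix. No such lemma exists, and the obstruction is structural, not technical. If $\Phi$ is a total Turing functional, then for each $n$ the sets $\Phi^{-1}(\llb\sigma\rrb)$ with $|\sigma|=n$ are pairwise disjoint effectively open sets covering $\cs$; by compactness each is clopen, hence a finite union of basic cylinders, so each value $\lambda_\Phi(\sigma)$ is a dyadic rational. A computable measure with, say, $\mu(\llb 0\rrb)=1/3$ therefore cannot be induced by any total functional, and no adjustment on a null set of oracles can change this, since the constraint holds for \emph{every} total $\Phi$.

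The paper works around exactly this. Given an almost total $\Phi$ with $\lambda_\Phi=\mu$ and $\mu(\C)>2^{-i}$, it does not attempt to reproduce $\mu$ by a $\tt$-functional. It instead exploits that $\dom(\Phi)$ is $\Pi^0_2$ of full measure, so it contains a $\Pi^0_1$ class of measure $>1-2^{-i}$ (the complement of a level of the universal Martin-L\"of test), and defines a $\tt$-functional $\Psi$ agreeing with $\Phi$ on that $\Pi^0_1$ class and outputting $0^\infty$ elsewhere. Then $\lambda_\Psi\neq\mu$ in general, but $\Psi^{-1}(\C)$ still contains $\Phi^{-1}(\C)$ intersected with that class, which has positive Lebesgue measure; that is all $\lambda((\C)^{\leq_{tt}})>0$ requires. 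Your proof needs a workaround of this kind rather than the exact-realization lemma, which cannot be repaired.
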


\begin{proof} (i)  ($\Rightarrow$)  If $\C$ is negligible, then $\overline\rho(\C)=0$ for every left-c.e.\ continuous semi-measure $\rho$ by Remark \ref{rmk-Mbar}.  In particular, by Theorem \ref{thm-MachinesInduceSemiMeasures}(ii) and Proposition \ref{prop-rhobar-properties}(ii), $\overline\lambda_\Phi(\C)=0$ for every Turing functional $\Phi$ and hence
\[
\lambda(\C^{\leq_T})=\sum_{i\in\N}\lambda(\{Y \in \cs : \Phi_i(Y)\in\C\})=\sum_{i\in\N}\overline\lambda_{\Phi_i}(\C)=0.
\]
($\Leftarrow$) Since $\lambda(\C^{\leq_T})=0$, it follows that $\overline\lambda_\Phi(\C)=0$, where $\Phi$ is a universal Turing functional.  Thus $\Mb(\C)=\overline\lambda_\Phi(\C)=0$.

\bigskip
\noindent (ii)  ($\Rightarrow$)  If $\C$ is $\tt$-negligible, then $\lambda_\Phi(\C)=0$ for every $\tt$-functional $\Phi$.  The result clearly follows.\\
($\Leftarrow$) Now suppose that $\mu(\C)>0$ for some computable measure $\mu$.  Then by Remark \ref{rmk-functionals-measures}, there is some almost total Turing functional $\Phi$ such that $\lambda_\Phi=\mu$.  Since the domain of a Turing functional is $\Pi^0_2$, if $X\notin\dom(\Phi)$, it follows that $X\notin\MLR$ (in fact, $X$ is not even Kurtz random).  Indeed, the complement of $\dom(\Phi)$ is a $\Sigma^0_2$ class of measure~$0$, so $X\notin\dom(\Phi)$ implies that $X$ is contained in a $\Pi^0_1$ class of measure~$0$.

Let $i$ be the least such that $\mu(\C)>2^{-i}$.  Then $\lambda(\Phi^{-1}(\C)\cap\hat\U^c_i)>0$, where $\hat\U_i$ is the $i^{\mathrm{th}}$ level of the universal Martin-L\"of test (so that $\lambda(\hat\U_i^c)>1-2^{-i}$). Now, since $\Phi$ is total on $\hat\U_i^c$ (because $\Phi$ is defined on all Martin-L\"of randoms, and $\hat\U_i^c$ contains only Martin-L\"of randoms), which is a $\Pi^0_1$ class, there is a $\tt$-functional $\Psi$ which coincides with $\Phi$ on $\hat\U_i^c$.  This holds because for every $\Pi^0_1$ class~$\P$ and every Turing functional~$\Phi$ that is total on $\P$, there is a Turing functional~$\Psi$ that is total on $\cs$ and coincides with $\Phi$ on~$\P$.

Since $\Phi^{-1}(\C)\cap\hat\U^c_i=\Psi^{-1}(\C)\cap\hat\U^c_i$, it follows that $\lambda(\Psi^{-1}(\C)\cap\hat\U^c_i)>0$ and hence $\lambda(\C^{\leq_{tt}})>0$.


\end{proof}

By the following proposition, there is a simple characterization of negligible and $\tt$-negligible singletons.

\begin{proposition} For $X\in\cs$, the following are equivalent:
\begin{itemize}
\item[(i)] $\{X\}$ is negligible.
\item[(ii)] $\{X\}$ is $\tt$-negligible.
\item[(iii)] $X$ is non-computable.
\end{itemize}
\end{proposition}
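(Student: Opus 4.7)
The plan is to establish the cycle (i) $\Rightarrow$ (ii) $\Rightarrow$ (iii) $\Rightarrow$ (i). The first two implications follow immediately from machinery already in place; the substantive step is (iii) $\Rightarrow$ (i), which is essentially the classical theorem of de Leeuw, Moore, Shannon, and Shapiro that the Turing upper cone of a non-computable sequence has Lebesgue measure zero.

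For (i) $\Rightarrow$ (ii), every computable measure $\mu$ is itself a lower semi-computable continuous semi-measure with $\overline{\mu} = \mu$, so by Remark~\ref{rmk-Mbar} there is some $c$ with $\mu \leq c \cdot \Mb$; hence $\Mb(\{X\}) = 0$ forces $\mu(\{X\}) = 0$ for every computable $\mu$. For (ii) $\Rightarrow$ (iii), I argue contrapositively: if $X$ is computable, then the Dirac measure $\delta_X$ is a computable measure with $\delta_X(\{X\}) = 1$, so $\{X\}$ is not $\tt$-negligible.

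The substantive step is (iii) $\Rightarrow$ (i). By Proposition~\ref{prop:carac_neglible}(i) it suffices to show $\lambda(\{Y : X \leq_T Y\}) = 0$ whenever $X$ is non-computable. Write this upper cone as $\bigcup_e A_e$ with $A_e = \{Y : \Phi_e^Y = X\}$; it is enough to prove $\lambda(A_e) = 0$ for every~$e$, which I would argue by contradiction. Suppose $\lambda(A_e) > 0$. Since basic cylinders generate the Borel $\sigma$-algebra, there exists a string $\sigma$ with $\lambda(A_e \cap \llb\sigma\rrb) > (2/3) \cdot 2^{-|\sigma|}$. For each $n$ and $b \in \{0,1\}$ set
\[
U_n^b \;=\; \{Y \in \llb\sigma\rrb : \Phi_e^Y(n) \downarrow = b\}.
\]
The sets $U_n^0, U_n^1$ are disjoint, and since $A_e \cap \llb\sigma\rrb \subseteq U_n^{X(n)}$, one has $\lambda(U_n^{X(n)}) > (2/3)\cdot 2^{-|\sigma|}$ while $\lambda(U_n^{1-X(n)}) < (1/3)\cdot 2^{-|\sigma|}$. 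Because each $\lambda(U_n^b)$ is lower semi-computable uniformly in $n$ and~$b$, one can effectively identify the unique $b$ whose approximation crosses the threshold $(1/2)\cdot 2^{-|\sigma|}$, and this $b$ must equal $X(n)$. Carrying this out for every $n$ computes $X$, contradicting the hypothesis. The only point requiring care is the density argument producing $\sigma$, which is routine, so no genuine obstacle arises.
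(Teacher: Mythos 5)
Your proof is correct, and the logical decomposition (the cycle (i) $\Rightarrow$ (ii) $\Rightarrow$ (iii) $\Rightarrow$ (i)) matches the paper's. The difference is in how much you make explicit: the paper disposes of (ii) $\Rightarrow$ (iii) by citing Kautz's lemma and of (iii) $\Rightarrow$ (i) by citing Sacks' theorem on the measure of Turing upper cones, whereas you give a self-contained proof. Two small remarks. First, for (ii) $\Rightarrow$ (iii) your Dirac-measure observation is exactly the direction needed (the contrapositive asks only that a computable $X$ be an atom of some computable measure), and it is actually cleaner than the paper's phrasing, which quotes the harder direction of Kautz's equivalence. Second, your majority-vote argument for (iii) $\Rightarrow$ (i) is a standard and correct proof of the de Leeuw--Moore--Shannon--Shapiro / Sacks theorem; the only ingredient you invoke without proof is the Lebesgue density step producing $\sigma$, which, as you say, is routine. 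So the content is the same as the paper's; you have simply unpacked the two cited black boxes.
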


\begin{proof}
(i)$\Rightarrow$(ii) is immediate.  (ii)$\Rightarrow$(iii) follows from Lemma \ref{lem-kautz}, which tells us that $X$ is computable if $\mu(\{X\})>0$ for some computable measure $\mu$.  Lastly, (iii)$\Rightarrow$(i) is the theorem of De Leeuw et al.\ that was mentioned in the introduction.  
\end{proof}

Further, it is clear that for a $\Sigma^0_1$ class $\sS\subseteq\cs$, $\sS$ is negligible if and only if $\sS$ is $\tt$-negligible if and only if $\sS$ is empty.  However, the notions of negligibility and $\tt$-negligibility are non-trivial for $\Pi^0_1$ classes.  For instance, if we let $\sPA$ be the $\Pi^0_1$ class of consistent completions of $\PA$, then as shown by Jockusch and Soare in~\cite{JockuschS1972}, $\sPA$ is negligible (and thus $tt$-negligible).


Although every negligible $\Pi^0_1$ class is $\tt$-negligible, the converse does not hold.

\begin{theorem}\label{thm:tt-negligible-not-negligible}
There exists a tt-negligible $\Pi^0_1$ class that is not negligible. 
\end{theorem}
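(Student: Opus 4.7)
The goal is to exhibit a $\Pi^0_1$ class $\C$ together with a non-almost-total Turing functional $\Phi$ such that $\lambda(\Phi^{-1}(\C)) > 0$, witnessing non-negligibility via Proposition~\ref{prop:carac_neglible}(i), while $\mu(\C) = 0$ for every computable measure $\mu$, giving $\tt$-negligibility. The requirement that $\Phi$ be non-almost-total is essential: by Remark~\ref{rmk-functionals-measures}, an almost-total $\Phi$ would yield the computable measure $\lambda_\Phi$ with $\lambda_\Phi(\C) > 0$, contradicting $\tt$-negligibility.

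The construction combines two ingredients. First, I would fix a partial Turing functional $\Phi_0$ whose domain $D \subseteq \cs$ is a $\Pi^0_2$ set of positive Lebesgue measure avoiding all computable sequences --- for instance, take $D$ to be the complement of the $c$-th level of the universal Martin-L\"of test for some large enough $c$, which has Lebesgue measure greater than $1 - 2^{-c}$ and contains no computable sequence. Second, I would construct a co-c.e.\ tree $T$ (with $\C = [T]$) by diagonalizing against the computable measures: enumerate the almost-total functionals $(\Psi_e)_{e \in \N}$, obtaining an enumeration of computable measures $(\mu_e)_{e \in \N}$ with each $\mu$ appearing infinitely often under padding, and at stage $e$ find a $\Sigma^0_1$ open set $U_e$ with $\mu_e(U_e) > 1 - 2^{-e}$ and restrict $T$ to lie inside $U_e$ from level $e$ onward. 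In the limit, every computable $\mu$ appears as some $\mu_{e_k}$ with $e_k \to \infty$, yielding $\mu(\C) \leq 2^{-e_k} \to 0$, so $\mu(\C) = 0$, which gives $\tt$-negligibility.

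To witness non-negligibility, I would refine $\Phi_0$ to a functional $\Phi$ that diverges on the portion of $D$ whose $\Phi_0$-image would leave $T$, keeping $\Phi^{-1}(\C)$ as large as possible. The main obstacle is ensuring $\lambda(\Phi^{-1}(\C)) > 0$ after all these restrictions: the loss at stage $e$ is bounded by $\lambda(\Phi_0^{-1}(U_e^c) \cap D) = \bar{\lambda}_{\Phi_0}(U_e^c)$, and one needs the total loss to stay below $\lambda(D)$. This forces each $U_e$ to be chosen not only $\mu_e$-large but also $\bar{\lambda}_{\Phi_0}$-large (equivalently, its complement $\bar{\lambda}_{\Phi_0}$-small), a condition on the mutual behavior of the computable measures $\mu_e$ and the non-computable sub-probability measure $\bar{\lambda}_{\Phi_0}$. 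The crucial technical point is that $\bar\lambda_{\Phi_0}$, coming from a non-almost-total functional, can be concentrated on a $\lambda$-null $\Pi^0_1$ region whose complement has small $\mu_e$-measure for every $e$; this divergence between $\Mb$ and the class of computable measures is precisely the gap that makes the separation between negligibility and $\tt$-negligibility possible, and identifying a $\Phi_0$ exhibiting this divergence is the heart of the argument.
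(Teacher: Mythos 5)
Your proposal takes a genuinely different route from the paper, and that route has a gap that you yourself hint at but do not close. The paper does not construct the separating class at all: it quotes a theorem of Downey, Greenberg, and Miller asserting the existence of a non-negligible perfect thin $\Pi^0_1$ class, and then proves a structural lemma (via a clever $\sup\{r : \mu(\C\cap[0,r])<q\}$ argument) showing that every perfect thin $\Pi^0_1$ class is $\tt$-negligible. The content of the theorem is thus outsourced to a deep external result, with only a light argument needed on top. Your plan instead attempts a direct diagonalization construction of a class $\C$ and a functional $\Phi$.

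The gap is your last sentence: you say that ``the crucial technical point is that $\bar\lambda_{\Phi_0}$ \ldots can be concentrated on a $\lambda$-null $\Pi^0_1$ region whose complement has small $\mu_e$-measure for every $e$,'' and that ``identifying a $\Phi_0$ exhibiting this divergence is the heart of the argument.'' But a $\Pi^0_1$ class $\C$ on which some $\bar\lambda_{\Phi_0}$ concentrates (so $\Mb(\C)>0$) while $\mu(\C)=0$ for every computable $\mu$ \emph{is} the object the theorem asks for; the equivalence $\Mb(\C)>0 \Leftrightarrow \C$ non-negligible follows from Proposition~\ref{prop:carac_neglible} and the universality of $\Mb$. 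So the plan has reduced the problem to a restatement of itself, and the step that carries all of the difficulty is left unproven. The paper's route --- citing Downey--Greenberg--Miller --- is exactly what is being used to discharge this step, and there is no elementary shortcut hiding here.

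A secondary issue: the proposed diagonalization starts by ``enumerating the almost-total functionals $(\Psi_e)$'' to obtain an enumeration of computable measures. There is no effective enumeration of almost-total functionals (or, equivalently, of computable measures): almost-totality is not a decidable property of an index. This obstacle is routine to circumvent with a more careful construction (using partial guesses and injury), but as written it is a further gap. More importantly, even if one could enumerate the $\mu_e$'s, the requirement that each $U_e$ be chosen with $\bar\lambda_{\Phi_0}(U_e^c)$ small cannot be verified effectively, since $\bar\lambda_{\Phi_0}$ is not a computable measure --- so the construction cannot ``see'' whether it is losing too much mass at each stage, which is precisely why the existence of a suitable $\Phi_0$ needs to be supplied by an external, non-constructive argument such as the thin-class theorem.
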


The proof of Theorem~\ref{thm:tt-negligible-not-negligible} draws upon a theorem of Downey, Greenberg and Miller~\cite{DowneyGM2008}: there exists a non-negligible perfect thin $\Pi^0_1$ class, where a $\Pi^0_1$ class~$\C$ is \emph{thin} if for every $\Pi^0_1$ subclass $\C'$ of $\C$, there exists a clopen set $D$ such that $\C' = \C \cap D$. We argue that any perfect thin class must be $\tt$-negligible, which extends a result of Simpson~\cite{Simpson2005} who proved that every thin class must have Lebesgue measure~$0$.  However, our proof strategy is very different from Simpson's.  We first need the following lemma, which is folklore. 

\begin{lemma}\label{lem:comp-measure-member}
Let $\C$ be a $\Pi^0_1$ class. If for some computable probability measure~$\mu$ the value of $\mu(\C)$ is a positive computable real number, then $\C$ contains a computable member. 
\end{lemma}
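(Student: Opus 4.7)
The plan is to effectively descend through the canonical co-c.e.\ tree $T$ of $\C$, staying at each level on a node of positive $\mu$-measure. The key observation is that although the function $f\colon T\to[0,1]$ defined by $f(\sigma):=\mu(\C\cap\llb\sigma\rrb)$ is \emph{a priori} only upper semi-computable (since $\C$ is $\Pi^0_1$ and $\mu$ is computable), under the hypothesis that $f(\emptystring)=\mu(\C)$ is a computable real, $f(\sigma)$ turns out to be computable uniformly in $\sigma$.

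I would establish the computability of $f$ by induction on $|\sigma|$. The base case is the hypothesis. For the inductive step, use the additivity $f(\sigma)=f(\sigma 0)+f(\sigma 1)$: if $f(\sigma)$ is computable, then writing $f(\sigma 0)=f(\sigma)-f(\sigma 1)$ and noting that $f(\sigma 1)$ is upper semi-computable, we get that $f(\sigma 0)$ is lower semi-computable; combined with its own upper semi-computability this makes $f(\sigma 0)$ computable. Symmetrically for $f(\sigma 1)$.

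Next I would construct a computable path $X\in\C$ recursively, maintaining nodes $\sigma_n\in T$ with $f(\sigma_n)>0$. Set $\sigma_0=\emptystring$; note $f(\sigma_0)=\mu(\C)>0$. Given $\sigma_n$ with $f(\sigma_n)>0$, first effectively find a positive rational $L$ with $L<f(\sigma_n)$ (possible because $f(\sigma_n)$ is a positive computable real). Then compute a rational approximation $q$ to $f(\sigma_n 0)$ within $L/4$. If $q>L/2$, set $\sigma_{n+1}=\sigma_n 0$, so that $f(\sigma_{n+1})>L/4>0$. Otherwise $f(\sigma_n 0)\le 3L/4$, whence
\[
f(\sigma_n 1)=f(\sigma_n)-f(\sigma_n 0)>L-\tfrac{3L}{4}=\tfrac{L}{4}>0,
\]
and we set $\sigma_{n+1}=\sigma_n 1$. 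The resulting $X=\bigcup_n\sigma_n$ is computable; and since $f(\sigma_n)>0$ forces $\llb\sigma_n\rrb\cap\C\neq\emptyset$, every $\sigma_n$ lies in the canonical tree $T$ of $\C$, so $X\in[T]=\C$.

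The substantive step is the first one: the observation that computability of the single real $\mu(\C)$ propagates down the entire tree, turning an upper semi-computable function into a fully computable one. Once that is in hand, the path construction is routine bookkeeping — picking a child with provably positive mass at each stage using a rational lower bound on $f(\sigma_n)$.
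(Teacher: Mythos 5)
Your proof is correct and follows essentially the same route as the paper: both establish that $\sigma\mapsto\mu(\C\cap\llb\sigma\rrb)$ is a computable function by combining its upper semi-computability (from $\C$ being $\Pi^0_1$) with lower semi-computability obtained by subtracting upper semi-computable sibling masses from a known computable total, and then both descend the tree along nodes of provably positive mass. The only cosmetic difference is that you phrase the computability step as an induction on $|\sigma|$ using $f(\sigma 0)=f(\sigma)-f(\sigma 1)$, whereas the paper uses the single non-inductive identity $\nu(\sigma)=\nu(\emptystring)-\sum_{|\tau|=|\sigma|,\,\tau\neq\sigma}\nu(\tau)$, which makes the uniformity in $\sigma$ slightly more immediate.
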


\begin{proof}
Consider the measure $\nu: \str \rightarrow [0,1]$ defined by $\nu(\sigma)=\mu(\C \cap \llb\sigma\rrb)$. Since $\C$ is $\Pi^0_1$, the measure~$\nu$ is right-c.e.\ (that is, the values $\nu(\sigma)$ for $\sigma\in\str$ are uniformly right-c.e.). However, for every $\sigma$, we have
\[
\nu(\sigma)=\nu(\emptystring)-\sum_{\substack{|\tau|=|\sigma|\\\tau \not=\sigma}} \nu(\tau)
\]
and since $\nu(\emptystring)$ is computable (it is equal to $\mu(\C)$), this shows that $\nu(\sigma)$ is also left-c.e.\ uniformly in~$\sigma$. Therefore $\nu$ is computable. It is then easy to computably build by induction a sequence of strings $\sigma_0 \prec \sigma_1 \prec \ldots$ with $|\sigma_i|=i$ and such that $\nu(\sigma_i) \geq \nu(\emptystring) \cdot 4^{-i} >0$. In particular $\C \cap \llb\sigma_i\rrb \not= \emptyset$. Therefore, the sequence $X$ extending all~$\sigma_i$ is computable and must be an element of $\C$.
\end{proof}

\begin{lemma}
If $\C$ is a perfect thin $\Pi^0_1$ class, then it is $\tt$-negligible. 
\end{lemma}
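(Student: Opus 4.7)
The plan is to prove the contrapositive: assume $\mu(\C) > 0$ for some computable probability measure $\mu$ and derive a contradiction by producing a computable member of $\C$. First, a perfect thin $\Pi^0_1$ class cannot contain any computable element: if $X \in \C$ were computable, then $\{X\}$ would be a $\Pi^0_1$ subclass of $\C$, so by thinness $\{X\} = \C \cap D$ for some clopen $D$; picking a prefix $\sigma \prec X$ with $\llb\sigma\rrb \subseteq D$ would give $\C \cap \llb\sigma\rrb = \{X\}$, isolating $X$ in $\C$ and contradicting perfection. Combined with Lemma~\ref{lem-kautz}, this also rules out $\C$ containing any atom of a computable measure.

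With this in hand, I would aim to produce a $\Pi^0_1$ subclass $\C^* \subseteq \C$ whose measure $\mu(\C^*)$ is a positive \emph{computable} real, since Lemma~\ref{lem:comp-measure-member} applied to $\mu$ and $\C^*$ would then yield a computable member of $\C^* \subseteq \C$, contradicting the first step. A natural candidate is $\C^* := \C \cap D$, where $D$ is a clopen representative (guaranteed by thinness) for the $\Pi^0_1$ subclass $\C \cap \hat{\mathcal{U}}_n^c$, with $(\hat{\mathcal{U}}_i)_{i \in \N}$ the universal $\mu$-Martin-L\"of test and $n$ chosen so that $2^{-n} < \mu(\C)/2$. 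This already ensures $\mu(\C^*) \geq \mu(\C)/2 > 0$.

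The main obstacle is promoting $\mu(\C^*)$ from upper semi-computable (automatic, since $\C^*$ is $\Pi^0_1$) to computable. The plan is to write $\mu(\C^*) = \mu(D) - \mu(D \setminus \C)$, where $\mu(D)$ is computable because $D$ is clopen, and to handle $\mu(D \setminus \C)$ by leveraging the effectivity built into thinness: a clopen $D$ satisfying $\C \cap D = \C \cap \hat{\mathcal{U}}_n^c$ can be located by searching through finite clopens and checking the compatibility condition, which reduces to the $\Sigma^0_1$-verifiable statement that a certain $\Pi^0_1$ class is empty. The stabilization of this effective search, together with the two-sided bound $\mu(\C) - 2^{-n} \leq \mu(\C \cap \hat{\mathcal{U}}_n^c) \leq \mu(\C)$, is what I expect to pin $\mu(D \setminus \C)$ down from both directions. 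Making this promotion rigorous is the principal technical hurdle of the argument.
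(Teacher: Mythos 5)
Your first step (a perfect thin class has no computable member, hence no atoms of any computable measure) and your overall target (produce a $\Pi^0_1$ subclass of $\C$ with \emph{positive computable} $\mu$-measure and apply Lemma~\ref{lem:comp-measure-member}) both match the paper exactly. The gap is in the middle: you have not shown, and your sketch does not convincingly argue, that $\mu(\C^*)$ is computable, and the specific mechanism you propose has two concrete problems.

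First, the search for the clopen $D$ is not effective as claimed. The condition $\C\cap D=\C\cap\hat{\mathcal U}_n^c$ splits into $\C\cap\hat{\mathcal U}_n^c\cap D^c=\emptyset$ (emptiness of a $\Pi^0_1$ class, hence $\Sigma^0_1$, fine) and $\C\cap D\subseteq\hat{\mathcal U}_n^c$, i.e.\ an inclusion between two $\Pi^0_1$ classes, which is $\Pi^0_2$ in general and not something a finite search can certify. Second, even granting a correct $D$, you are trying to compute $\mu(\C\cap\hat{\mathcal U}_n^c)$, but the only two-sided estimate you offer sandwiches it between $\mu(\C)-2^{-n}$ and $\mu(\C)$, and $\mu(\C)$ is precisely the quantity that is not known to be computable --- indeed under your contradiction hypothesis it \emph{cannot} be computable, since a positive computable value of $\mu(\C)$ would already hand Lemma~\ref{lem:comp-measure-member} a computable member of $\C$. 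So the bound you invoke gives no effective lower approximation.

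The paper sidesteps this entirely by \emph{choosing} the measure rather than hoping to compute it. Fix a rational $q<\mu(\C)$, identify $\cs$ with $[0,1]$, and let $\alpha=\sup\{r\in\Q:\mu(\C\cap[0,r])<q\}$. This $\alpha$ is lower semi-computable (the defining predicate is $\Sigma^0_1$), and since $\C$ has no $\mu$-atoms the function $x\mapsto\mu(\C\cap[0,x])$ is continuous, giving $\mu(\C\cap[0,\alpha])=q$ exactly. If $\alpha$ is dyadic, $\C\cap[0,\alpha]$ is the desired $\Pi^0_1$ subclass with measure the computable real $q$; if not, thinness applied to the $\Pi^0_1$ subclass $\C\cap[\alpha,1]$ produces a dyadic $\beta>\alpha$ with $\C\cap(\alpha,\beta)=\emptyset$, so $\mu(\C\cap[0,\beta])=q$ and we are back in the first case. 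Note that thinness is used only to round the left endpoint up to a dyadic, not to solve a measure-computation problem. If you want to salvage your approach, that is the idea to import: pin the measure to a pre-selected rational by construction, rather than trying to verify after the fact that an arbitrary $\Pi^0_1$ subclass happens to have computable measure.
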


\begin{proof}  First, observe that if a thin class $\P$ contains a computable member~$X$, then since $\{X\}$ is a $\Pi^0_1$ subclass of $\P$, there is some $\sigma\in\str$ such that $\{X\}=\P\cap\llb\sigma\rrb$.  Thus, $X$ is isolated in $\P$.  It thus follows that a perfect thin $\Pi^0_1$ class contains no computable members.  

Now, for the sake of contradiction, suppose that there exists a computable measure~$\mu$ such that $\mu(\C)>q$ for some positive rational~$q$. Identifying $\cs$ with the unit interval $[0,1]$ in the usual way, let
\[
\alpha = \sup\big\{r \in [0,1] \cap \Q \, : \, \mu(\C \cap [0,r]) < q \big\}.
\]
Since $ \mu(\C \cap [0,r]) < q$ is a $\Sigma^0_1$ predicate in $r$, $\alpha$ is a left-c.e.\ real. Moreover, since~$\C$ has no computable member, it contains no atom of $\mu$, and thus the function $x \mapsto \mu(\C \cap [0,x])$ is continuous. Therefore, by definition of $\alpha$, $\mu(\C \cap [0,\alpha))=\mu(\C \cap [0,\alpha])=q$. \\

Consider the class $\C \cap [\alpha,1]$, which is a $\Pi^0_1$ subclass of $\C$ since $\alpha$ is left-c.e. By the thinness of $\C$, there exists a clopen set~$D$ such that $\C \cap [\alpha,1]=\C \cap D$ and thus such that $\C \cap D^c=\C \cap [0,\alpha)$. From this we deduce $\mu(\C \cap D^c)=q$, which is a positive, computable real number. Since $D$ is clopen, $\C \cap D^c$ is $\Pi^0_1$, and so by Lemma~\ref{lem:comp-measure-member}, $\C \cap D^c$ contains a computable member. This contradicts our above observation that $\C$ contains no such members.\\

%
%
\end{proof}

\section{Depth and $\tt$-depth}\label{sec:depth}

Members of negligible and $\tt$-negligible classes are difficult to produce, in the sense that their members cannot be computed from random oracles with positive probability.  Given a $\Pi^0_1$ class $\P$, we can instead consider the probability of producing an \emph{initial segment} of some member of $\P$.  By looking at local versions of negligibility and $\tt$-negligibility, given in terms of initial segments of members of the classes in question, we will obtain the notions of depth and $\tt$-depth, respectively. Since we are interested in strings that are initial segments of some member of $\C$, the natural representation of~$\C$ as a tree is via its co-c.e.\ tree (recall that it is precisely the tree consisting of the strings~$\sigma$ that are prefixes of some path of $\C$, and by the standard compactness argument, this is a co-c.e.\ set of strings).


Recall our convention from Section \ref{subsec:semi-measures}: if $S$ is a set of strings, then $\M(S)$ and $\m(S)$ are defined to be $\sum_{\sigma \in S} \M(\sigma)$ and $\sum_{\sigma \in S} \m(\sigma)$, respectively. Similarly, if $\mu$ is a measure, $\mu(S)$ denotes $\sum_{\sigma \in S} \mu(\sigma)$.  Furthermore, recall from Section \ref{subsec:notation} that given a tree $T\subseteq\str$, $T_n$ denotes the set of all members of $T$ of length $n$. We now define the central notion of this paper, namely the notion of a deep $\Pi^0_1$ class.  Depth strengthens the notion of negligibility. It is easy to see that a class $\C\subseteq\cs$ of canonical co-c.e.\ tree~$T$ is negligible if and only if $\M(T_n)$ converges to~$0$ as $n$ grows without bound.  When this convergence to~$0$ is effective, $\C$ is said to be deep.

\begin{definition}
Let $\C\subseteq\cs$ be a $\Pi^0_1$ class and $T$ its associated co-c.e.\ tree. 
\begin{itemize}
\item[(i)] $\C$ is \emph{deep} if there is some computable order function~$h$ such that for all~$n$,
\[
\M(T_n) \leq 2^{-h(n)}
\] 
\item[(ii)] $\C$ is \emph{$\tt$-deep} if for every computable measure $\mu$ there exists a computable order function~$h$ such that for all~$n$,
\[
\mu(T_n) \leq 2^{-h(n)}.
\]
\end{itemize}
\end{definition}

Our choice of the term ``deep" is due to the similarity between the notions of a deep class and that of a logically deep sequence, introduced by Bennett in \cite{Bennett1995}.  Roughly, a sequence $X$ is logically deep if there is no computable function that bounds the amount of time to recover each initial segment of $X$ from its shortest description (measured in terms of Kolmogov complexity).  Logically deep sequences are highly structured; in particular, it is difficult to produce initial segments of a deep sequence via probabilistic computation, a fact made precise for finite strings by what Bennett calls the slow growth law (see \cite[Theorem~1]{Bennett1995}).   Deep classes can thus be seen as an analogue for $\Pi^0_1$ classes of logically deep sequences.

\begin{remark}\label{rem:deep-order}
Note that depth can be equivalently defined as follows: $\C$ is deep if for some computable function~$f$ one has $\M(T_{f(k)}) \leq 2^{-k}$ for all~$k$. Indeed, if $\M(T_{n})\leq 2^{-h(n)}$, then, setting $f=h^{-1}$, we have $\M(T_{f(k)}) \leq 2^{-n}$, and conversely, if $\M(T_{f(n)}) \leq 2^{-n}$, we can assume that~$f$ is increasing and then taking $h=f^{-1}$, we have $\M(T_n)\leq 2^{-h(n)}$ (here we use the fact that $\M(T_n)$ is non-increasing in $n$).

The same argument shows that $\C$ is $\tt$-deep if and only if for every computable measure~$\mu$, there exists a computable function~$f$ such that for all~$n$, $\mu(T_{f(n)}) \leq 2^{-n}$.  \\
\end{remark}

\begin{remark}\label{rem:deep-with-m}
Another alternative way to define depth is to use $\m$ instead of~$\M$: a $\Pi^0_1$ class~$\C$ with canonical co-c.e.\ tree~$T$ is deep if and only if there is a computable order function~$h$ such that $\m(T_n) \leq 2^{-h(n)}$ for all~$n$, if and only if there is a computable function~$f$ such that $\m(T_{f(n)})\leq 2^{-n}$ for all~$n$. Indeed, the following inequality holds for all strings~$\sigma$ (see for example \cite{Gacs-notes}):
\[
\m(\sigma)  \leq^\times \M(\sigma) \leq^\times \m(\sigma)/ \m(|\sigma|) 
\]
Thus, for~$h$ a computable order function, if $\M(T_n) \leq 2^{-h(n)}$ for all~$n$, then there is some $c$ such that $\m(T_n) \leq 2^{-h(n)+c}$ for all~$n$. Conversely, if $\m(T_n) \leq 2^{-h(n)}$, then $\m(T_{f(n)}) \leq 2^{-n}$ for $f=h^{-1}$, and by the above inequality,
\[
\M(T_{f(n)}) \leq^\times 2^{-n}/ \m(f(n)) \leq^\times 2^{-n} \cdot n^2. 
\]
Thus, taking $g(n)=f(2n+c)$ for some large enough constant~$c$, we get~$\M(T_{g(n)})\leq 2^{-n}$, and thus $\C$ is deep. \\

\end{remark}

Since every $\Pi^0_1$ class $\C$ is the set of paths through a computable tree, why can't we simply define depth and $\tt$-depth in terms of this tree and not the canonical co-c.e.\ tree associated to $\C$?  In the case of depth, there are two reasons to restrict to the canonical co-c.e.\ trees associated to $\Pi^0_1$ classes.

First, the idea behind a deep class is that it is difficult to produce initial segments of some member of the class.  In general, for a $\Pi^0_1$ class $\C$, any computable tree $T$ contains non-extendible nodes, and so if we have a procedure that can compute these non-extendible nodes of $T$ with high probability, this tells us nothing about the difficulty of computing the extendible nodes of~$T$. 

Second, if we were to use any tree~$T$ representing a $\Pi^0_1$ class~$\C$ in the definition of depth, then depth would become a void notion, by the following proposition.

\begin{proposition}\label{prop:no-deep-comp-tree}
If $T$ is an infinite computable tree, then there is no computable order function~$f$ such that $\m(T_{f(n)}) \geq 2^{-n}$. 
\end{proposition}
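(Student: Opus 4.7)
The plan is to argue by contradiction via the universality of the discrete semi-measure~$\m$. Suppose such a computable order $f$ exists with $\m(T_{f(n)}) < 2^{-n}$ for every~$n$. My first move is to reduce to the case that $f$ is strictly increasing: since $f$ is computable, non-decreasing, and unbounded, one can effectively extract the subsequence of jump points $(n_k)_{k\geq 1}$ defined by $n_1=0$ and $n_{k+1}=\min\{n>n_k : f(n)>f(n_k)\}$. This subsequence is computable, satisfies $n_k \geq k-1$, and the function $g(k):=f(n_k)$ is strictly increasing with $\m(T_{g(k)}) < 2^{-(k-1)}$. Replacing $f$ by $g$ (and absorbing the resulting factor of $2$ into the final constant), I may assume $f$ itself is strictly increasing. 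In particular, the levels $T_{f(k)}$ for distinct $k$ are pairwise disjoint.

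The key is now to build a lower semi-computable discrete semi-measure that puts uniform mass on each level $T_{f(k)}$. Since $[T] \neq \emptyset$, K\"onig's lemma gives $T_n \neq \emptyset$ for every $n$; and since $T$ is computable, $k \mapsto |T_{f(k)}|$ is a computable everywhere-positive function. I therefore define
\[
m(\sigma) \;=\; \begin{cases} \dfrac{1}{k(k+1)\,|T_{f(k)}|} & \text{if } \sigma \in T_{f(k)} \text{ for some } k \geq 1,\\ 0 & \text{otherwise.} \end{cases}
\]
By disjointness of the levels, $\sum_{\sigma} m(\sigma) = \sum_{k\geq 1} 1/(k(k+1)) = 1$, so $m$ is a bona fide (lower semi-)computable discrete semi-measure.

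Applying the universality of $\m$, there is a constant $c$ with $m \leq c\cdot\m$. Summing over $\sigma \in T_{f(k)}$ then gives
\[
\m(T_{f(k)}) \;\geq\; \frac{m(T_{f(k)})}{c} \;=\; \frac{1}{c\, k(k+1)}.
\]
This lower bound is polynomial in~$k$, whereas the assumed upper bound $2^{-k}$ (or $2^{-(k-1)}$ after the reduction) is exponential, so for $k$ sufficiently large the two are incompatible and we obtain the desired contradiction.

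I do not anticipate any serious obstacle; the only point requiring genuine care is the initial reduction to a strictly increasing~$f$, where it matters essentially that $f$ is an order rather than an arbitrary computable function. The disjointness of the levels $T_{f(k)}$ that this affords is precisely what is needed to conclude that $m$ sums to at most~$1$; without it the construction of the semi-measure would fail.
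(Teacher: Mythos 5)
Your proof is correct, and it rests on the same core idea as the paper's argument — lower-bounding $\m(T_{f(n)})$ via the universality of $\m$ applied to a semi-measure you construct — but your execution is heavier. The paper simply picks a single computable sequence $(\sigma_n)$ with $\sigma_n \in T_{f(n)}$ (possible since $T$ is computable and infinite, $f$ computable) and invokes the standard inequality $\m(\sigma_n) \geq^* \m(n) \geq^* 1/n^2$, which already contradicts $\m(T_{f(n)}) < 2^{-n}$; no disjointness of levels is needed because only one representative per level is charged. You instead spread mass $1/(k(k+1))$ uniformly over each full level $T_{f(k)}$, which forces you to first reduce to strictly increasing $f$ so the levels are pairwise disjoint and the total mass stays below $1$. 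That reduction is correct, and so is the rest, but it is extra machinery: the one-representative trick sidesteps it entirely (even if $f$ repeats values, $\m(\sigma_n)\geq^* \m(n)$ still holds). The moral difference is minor — both proofs are instances of "exhibit an explicit lower semi-computable semi-measure that dominates $\m(T_{f(n)})$ polynomially in $n$" — but the paper's version is leaner because it leans on a standard fact about $\m$ of computably indexed strings rather than building the semi-measure by hand.
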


\begin{proof}
Given an infinite computable tree~$T$, let $f$ be a computable order function.  Then there is a computable sequence~$(\sigma_n)_{n \in \N}$ of strings such that $\sigma_n \in T_{f(n)}$ for every $n$. Thus 
\[
\m(T_{f(n)}) \geq \m(\sigma_n) \geq^\times \m(n) \geq^\times 1/n^2.
\] 
\end{proof}

By contrast with the notion of depth, for the definition of $\tt$-depth, it does not matter whether we work with the canonical co-c.e.\ tree associated to $\C$ or some computable tree $T$ such that $\C=[T]$ (a direct consequence of Theorem~\ref{prop:carac-tt-negl}). 

An important question concerns the relationship between depth and negligibility (and between $\tt$-depth and $\tt$-negligibility).  First, every deep  $\Pi^0_1$ class is negligible.  Indeed, if $\C$ is deep, $T$ is the canonical co-c.e.\ tree associated to $\C$, and $
\M(T_n) \leq 2^{-h(n)}$ for some computable order function $h$, then
\[
\Mb(\C)=\Mb\bigl(\bigcap_{n\in\N}\llb T_n\rrb\bigr)=\lim_{n\rightarrow\infty}\Mb(T_n)\leq\lim_{n\rightarrow\infty}\M(T_n)=0,
\]
where the second equality follows from the continuity of $\Mb$ from above.  A similar argument shows that $\tt$-depth implies $\tt$-negligibility.  Does the converse hold?  In the case of $\tt$-depth and $\tt$-negligibility, the answer is positive.  We also identify two other equivalent formulations of $\tt$-depth.  

\begin{theorem} \label{prop:carac-tt-negl}
Let $\C$ be a $\Pi^0_1$ class. The following are equivalent:
\begin{itemize}
\item[(i)] $\C$ is $\tt$-deep.
\item[(ii)] $\C$ is $\tt$-negligible. 
\item[(iii)] For every computable measure~$\mu$, $\C$ contains no $\mu$-Kurtz random element. 
\item[(iv)] For every computable measure~$\mu$, $\C$ contains no $\mu$-Martin-L\"of random element. 
\end{itemize}
\end{theorem}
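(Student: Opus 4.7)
The plan is to close the cycle (i) $\Rightarrow$ (ii) $\Rightarrow$ (iii) $\Rightarrow$ (iv) $\Rightarrow$ (i), where only the last implication is substantive.

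The first three implications are quick. For (i) $\Rightarrow$ (ii): if $\mu(T_n) < 2^{-h(n)}$ and $\C \subseteq \llb T_n \rrb$ for every $n$, then $\mu(\C) = 0$. For (ii) $\Rightarrow$ (iii): if $\mu(\C) = 0$ then $\C$ is itself a $\Pi^0_1$ class of $\mu$-measure zero, which every $\mu$-Kurtz random avoids by definition. For (iii) $\Rightarrow$ (iv), it suffices to check that when $\mu$ is computable, every $\mu$-Martin-L\"of random is $\mu$-Kurtz random: any $\Pi^0_1$ class $\P$ with $\mu(\P) = 0$ has a $\Sigma^0_1$ complement of computable measure~$1$, so one can effectively find clopen sets $V_k \subseteq \P^c$ with $\mu(V_k) > 1 - 2^{-k}$, and $(\cs \setminus V_k)_{k \in \N}$ is then a $\mu$-Martin-L\"of test covering $\P$.

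For the main step (iv) $\Rightarrow$ (i), fix a computable measure $\mu$ and let $(\hat\U_k)_{k \in \N}$ be a universal $\mu$-Martin-L\"of test, with $\hat\U_k = \llb S_k \rrb$ for uniformly c.e.\ sets $S_k \subseteq \str$. By hypothesis $\C \cap \MLR_\mu = \emptyset$, so $\C \subseteq \hat\U_k$ for every $k$. For each $k$, consider the pruned tree
\[
T^{(k)} = \{\, \sigma \in T : \text{no prefix of } \sigma \text{ lies in } S_k \,\}.
\]
Any path through $T^{(k)}$ would belong to $[T] = \C$ yet avoid $\hat\U_k$, a contradiction; so $[T^{(k)}] = \emptyset$ and by K\"onig's lemma $T^{(k)}$ is finite, so some level $T^{(k)}_d$ is empty.

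The key observation is that the predicate ``$T^{(k)}_d = \emptyset$'' is c.e.\ uniformly in $k, d$: the finite set $T^{(k)}_d[s]$ is uniformly computable, monotonically non-increasing in $s$, and $T^{(k)}_d = \emptyset$ iff $T^{(k)}_d[s] = \emptyset$ for some $s$. Dovetailing over $d$ and $s$, one computably finds some $d_k$ for which $T^{(k)}_{d_k} = \emptyset$, and by replacing $d_k$ with $\max(d_k, d_{k-1} + 1)$ one may force $(d_k)_k$ to be strictly increasing. By construction every string in $T_{d_k}$ extends some element of $S_k$, so $\llb T_{d_k} \rrb \subseteq \hat\U_k$ and $\mu(T_{d_k}) < 2^{-k}$. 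Setting $f(k) = d_k$ yields a computable increasing function with $\mu(T_{f(k)}) < 2^{-k}$, which by Remark~\ref{rem:deep-order} establishes $\tt$-depth with respect to $\mu$. The main obstacle is precisely this last step: (iv) is a purely semantic hypothesis and gives no a priori rate of decay for $\mu(T_n)$; the construction above extracts one by combining compactness with the fact that emptiness of each $T^{(k)}_d$ is a $\Sigma^0_1$ event that must eventually be recognized.
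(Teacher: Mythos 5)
Your proof is correct, but the substantive direction takes a genuinely different route from the paper's. The paper decomposes the cycle as $(i)\Rightarrow(ii)\Rightarrow(iii)\Rightarrow(iv)$ together with $(iv)\Rightarrow(ii)$ (immediate: $\MLR_\mu$ has $\mu$-measure~$1$) and then proves $(ii)\Rightarrow(i)$ directly. For this last step the paper observes that $\mu(T_n)$ is upper semi-computable uniformly in~$n$ (since $T_n$ is co-c.e.\ and $\mu$ is computable), so once one knows $\mu(T_n)\to 0$ one can, given~$k$, just dovetail over $(n,s)$ until an upper approximation of $\mu(T_n)$ drops below $2^{-k}$. This gives the computable rate with essentially no work. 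You instead prove $(iv)\Rightarrow(i)$ in one step: pull in the universal $\mu$-ML test $(\hat\U_k)_k$, note $\C\subseteq\hat\U_k$ for each~$k$, and use effective compactness (the emptiness of the pruned tree $T^{(k)}$ at some level is a $\Sigma^0_1$ event, so dovetailing finds the level) to locate a level $d_k$ with $\llb T_{d_k}\rrb\subseteq\hat\U_k$ and hence $\mu(T_{d_k})<2^{-k}$. Your argument is a bit heavier --- it brings in the universal test and a K\"onig's-lemma pruning --- but it has the merit of bypassing the measure-approximation bookkeeping and making the effective extraction of a rate from a purely semantic hypothesis $(iv)$ very explicit. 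You also prove the inclusion $\MLR_\mu\subseteq\KR_\mu$ for $(iii)\Rightarrow(iv)$ rather than merely citing it, which the paper treats as known. Both routes are sound; the paper's $(ii)\Rightarrow(i)$ step is the more economical of the two.
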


\begin{proof}
(i)$\Rightarrow$(ii). This is shown by a direct modification of the above proof that depth implies negligibility.\\
(ii)$\Rightarrow$(iii). This follows directly from the definition of Kurtz randomness.\\
(iii) $\Rightarrow$ (iv). This follows from the fact that Martin-L\"of randomness implies Kurtz randomness.\\
(iv) $\Rightarrow$ (ii). If $\mu(\C)>0$ for some computable measure $\mu$, then $\C$ must contain some $\mu$-Martin-L\"of random element since the set of $\mu$-Martin-L\"of random sequences has $\mu$-measure~$1$. \\
(ii)$\Rightarrow$ (i). Suppose that $\C$ is $\tt$-negligible and let $\mu$ be a computable measure. Let~$T$ be the co-c.e.\ tree associated to~$\C$. By $\tt$-negligibility, $\mu(\C)=0$, or equivalently, $\mu(T_n)$ tends to~$0$. Since the $T_n$ are co-c.e.\ sets of strings, $\mu(T_n)$ is right-c.e.\ uniformly in~$n$. Thus, given~$k$, it is possible to effectively find an~$n$ such that $\mu(T_n) \leq 2^{-k}$. Setting $f(k)=n$, we have a computable function~$f$ such that $\mu(T_{f(k)})\leq2^{-k}$ for all~$k$, therefore $\C$ is $\tt$-deep by Remark~\ref{rem:deep-order}. 
\end{proof}

\begin{remark}
Recall that a $\Pi^0_1$ class is \emph{special} if it has no computable member.  Given a computable sequence $X$, by Remark \ref{rmk-atom}, there is a computable measure $\mu$ such that $X\in\MLR_\mu$.  It thus
follows from Theorem \ref{prop:carac-tt-negl} that $\tt$-negligible classes contain no computable members and are thus special.
\end{remark}

Significantly, in contrast to the case with $\tt$-negligibility and $\tt$-depth, negligibility and depth do not coincide. This is due to a fundamental aspect of depth, namely that it is not invariant under Turing equivalence, unlike negligibility. Suppose that $\C$ and $\D$ are two classes such that for every $X \in \C$ there exists a $Y \in \D$ such that $X \equiv_T Y$, and vice-versa. Then by Proposition~\ref{prop:carac_neglible}, $\C$ is negligible if and only if~$\D$ is negligible. This invariance does not hold in general for deep classes, as the next theorem shows.


\begin{theorem}\label{thm:depth-not-Turing}
For any $\Pi^0_1$ class~$\C$, there is a $\Pi^0_1$ class~$\D$ such that:
\begin{itemize}
\item[(a)] the elements of~$\D$ are the same as the elements of~$\C$, modulo deletion of a finite prefix (which in particular guarantees that the elements of~$\C$ and $\D$ have the same Turing degrees); and
\item[(b)] $\D$ is not deep.
\end{itemize}
\end{theorem}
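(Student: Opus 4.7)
The plan is to build $\D$ as the topological closure in $\cs$ of a union of finite-prefix shifts of $\C$, so that the canonical co-c.e.\ tree of $\D$ contains ``simple'' extendible nodes at every level and $\M(T'_n)$ is bounded away from $0$. The prototype is
\[
\D \;=\; \overline{\bigcup_{k \geq 0} 0^k \cdot \C} \;=\; \bigcup_{k \geq 0} 0^k \cdot \C \;\cup\; \{0^\infty\}.
\]
First, I would check that this $\D$ is $\Pi^0_1$: a direct computation gives
\[
\cs \setminus \D \;=\; \bigcup_{a \geq 0} \Bigl(\llb 0^a 1 \rrb \;\cap\; \bigcap_{k=0}^{a} \{Y : Y[k{:}] \notin \C\}\Bigr),
\]
a countable union of finite intersections of $\Sigma^0_1$ classes, hence $\Sigma^0_1$.

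Next, each ``generic'' element of $\D$ is of the form $0^k X$ with $X \in \C$, a finite-prefix shift of $X$; conversely $\C \subseteq \D$ (the $k = 0$ case), so the shift condition holds except possibly at the added limit point $0^\infty$. To see that $\D$ is not deep, observe that the canonical co-c.e.\ tree $T'$ of $\D$ contains $0^n$ at every level (since $0^n \prec 0^\infty \in \D$), so
\[
\M(T'_n) \;\geq\; \M(0^n) \;\geq\; \Mb(\{0^\infty\}) \;>\; 0,
\]
where the last inequality uses that $0^\infty$ is computable and hence, by Lemma~\ref{lem-kautz} together with the universality of $\Mb$ discussed in \S\ref{subsec:semi-measures}, is an atom of positive $\Mb$-measure. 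Consequently $\M(T'_n)$ does not tend to $0$, and no computable order $h$ can satisfy $\M(T'_n) < 2^{-h(n)}$ for all $n$; so $\D$ is not deep.

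The main obstacle is justifying the shift condition at the limit point, which is genuinely problematic in the interesting case when $\C$ is deep: a deep $\C$ is negligible and therefore contains no computable element, so $0^\infty \notin \C$. The fix is to replace the zero-prefixes $0^k$ by prefixes $\tau_k = X_0 \uh k$ of some fixed element $X_0 \in \C$, so that the limit point added by closure is $X_0 \in \C$ itself, preserving the shift condition. The delicate part is that since $X_0$ may be non-computable, the co-c.e.\ description of $\D$ must be recast so that the allowed prefixes are precisely the nodes of the canonical co-c.e.\ tree $T$ of $\C$, rather than explicit strings $\tau_k$. Showing that the resulting $T'$ is indeed co-c.e., that every path of $T'$ is a finite-prefix shift of a $\C$-element, and that $\M(T'_n)$ remains bounded below thanks to contributions from low-complexity $T$-prefixes, is where the bulk of the technical work will lie.
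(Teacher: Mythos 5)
Your prototype $\D = \bigcup_k 0^k\cdot\C \cup\{0^\infty\}$ is a sensible starting point, and you correctly diagnose the problem with it: when $\C$ is deep it has no computable member, so $0^\infty\notin\C$ and the added limit point $0^\infty$ is not a finite-prefix shift of any $\C$-element, violating the first requirement of the theorem. Your diagnosis of why the $0^k$ version fails to be deep is also fine ($\M(0^n)\geq\Mb(\{0^\infty\})>0$ because $0^\infty$ is computable).

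The proposed fix, however, does not go through, and the gap is precisely where you locate "the bulk of the technical work." After recasting the allowed prefixes as nodes of the canonical co-c.e.\ tree $T$ of $\C$, you hope that $\M(T'_n)$ stays bounded below "thanks to contributions from low-complexity $T$-prefixes." But when $\C$ is deep there \emph{are} no low-complexity nodes in $T$: depth says precisely that $\M(T_n)<2^{-h(n)}$ for some computable order $h$, so every node $\sigma\in T_n$ has $\M(\sigma)<2^{-h(n)}$ as well. Prepending co-c.e.-tree prefixes therefore gives you no obvious source of $\M$-mass, and it is unclear whether the resulting class is non-deep at all; at the very least the key lower-bound step is missing.

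The missing idea is to abandon the canonical co-c.e.\ tree as the source of prefixes and use instead a \emph{computable} tree $T$ with $[T]=\C$, which exists for every $\Pi^0_1$ class but will typically have dead ends. A computable infinite tree always admits, for any computable order $f$, a computable sequence of nodes $\sigma_n\in T_{f(n)}$, and hence $\m(T_{f(n)})\geq\m(\sigma_n)\geq^*\m(n)\geq^*1/n^2$ --- this is exactly Proposition~\ref{prop:no-deep-comp-tree}. The paper's construction attaches a copy of the canonical co-c.e.\ tree $S$ of $\C$ to each \emph{terminal} node of the computable tree $T$, producing a co-c.e.\ tree $U = T\cup\{\sigma^\frown\tau : \sigma\in T_{\mathrm{term}},\ \tau\in S\}$. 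This $U$ is the canonical co-c.e.\ tree of a $\Pi^0_1$ class $\D$ all of whose members are either in $\C$ or of the form $\sigma^\frown X$ with $X\in\C$, and since $U$ contains the computable infinite tree $T$, the $\m$-mass lower bound above shows $\D$ cannot be deep. In short: it is the deliberately dead-ended computable representation of $\C$ --- not the pruned canonical co-c.e.\ tree --- that supplies the cheap nodes, and grafting onto its terminal nodes is what simultaneously repairs extendibility and preserves the prefix-shift structure.
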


\begin{proof}
Let $\C$ be a $\Pi^0_1$ class. Let $T$ be a computable tree such that $[T]=\C$, and let $S$ be the canonical co-c.e.\ tree associated to~$\C$. Consider the tree~$U$ obtained by appending a copy of~$S$ to each terminal node of~$T$. Formally, 
\[
U = T \cup \{\sigma^ \frown \tau \colon \sigma \in T_{\mathit{term}} ~\text{and}~ \tau \in S\},
\]
where~$T_{\mathit{term}}$ is the set of terminal nodes of~$T$, which is a computable set. One can readily verify that $U$ is the canonical co-c.e.\ tree associated to some $\Pi^0_1$ class. Indeed, suppose $\rho$ is a node of~$U$.  Then,
\begin{itemize}
\item either $\rho \in T$, in which case $\rho$ is either a prefix of some~$X\in[T]$ (and thus~$X\in[U]$), or $\rho$ is a prefix of a terminal node $\rho'$, which can then be extended to some~$(\rho')^\frown X\in[U]$, where $X\in[S]$; or 
\item $\rho$ is of the form $\sigma^\frown \tau$, with $\sigma \in T_{\mathit{term}}$ and $\tau \in S$. Since $S$ is a canonical co-c.e.\ tree, all its nodes extend to an infinite path, and thus $\sigma^\frown \tau$ has an extension $\sigma^\frown X\in[U]$ for some path~$X\in[S]$.
\end{itemize}

The fact that $U$ is co-c.e.\ follows directly from the fact that $T_{term}$ is computable and $S$ is co-c.e. Let~$\D$ be the $\Pi^0_1$ class whose canonical co-c.e.\ tree is~$U$. Then the elements of~$\D$ are either elements of~$[T]=\C$, or are of the form $\sigma^\frown X$ for some finite string~$\sigma$ and $X \in \C$, which gives us the first part of the conclusion. Finally, the canonical co-c.e.\ tree of $\D$ contains an infinite computable tree, namely, the tree~$T$, and therefore, by the argument as in the proof of Proposition~\ref{prop:no-deep-comp-tree}, $\D$ is not deep. 

\end{proof}

It is now straightforward to get a $\Pi^0_1$ class that is negligible but not deep: it suffices to take a negligible class~$\C$ and apply the above theorem. The resulting class~$\D$ is also negligible as its elements have the same Turing degrees as the elements of~$\C$ and negligibility is preserved under Turing equivalence by Proposition \ref{prop:carac_neglible}(i).  However, $\D$ is not deep. \\

The following summarizes the implications between the different properties introduced above for $\Pi^0_1$ classes.\\

\begin{center}
deep $\Rightarrow$ negligible $\Rightarrow$ $\tt$-negligible $\Leftrightarrow$ $\tt$-deep $\Rightarrow$ special
\end{center}
%

\section{Computational limits of randomness}\label{sec:comp-limits}

As we have seen, negligible $\Pi^0_1$ classes (and thus deep classes) have the property that one cannot compute a member of them with positive probability.  Although some random sequences can compute a member in any negligible $\Pi^0_1$ class (namely the Martin-L\"of random sequences of $\PA$ degree, as sequences of $\PA$ degree compute a member of \emph{every} $\Pi^0_1$ class), by the definition of negligibility, almost every random sequence fails to compute a member of a negligible class.

Similarly, one cannot $\tt$-compute a member of a $\tt$-negligible $\Pi^0_1$ class with positive probability.  The definition of $\tt$-negligibility implies that the collection of sequences that can $\tt$-compute a member of a $\tt$-negligible class form a set of Lebesgue measure zero.

In this section, we specify a precise level of randomness at which computing a member of a $\tt$-negligible, negligible, or deep class fails.  First, we consider the case for $\tt$-negligible classes.

\begin{theorem}
If $X\in\cs$ is Kurtz random, it cannot $\tt$-compute any member of a $\tt$-negligible $\Pi^0_1$ class. 
\end{theorem}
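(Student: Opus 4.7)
The plan is to argue by contradiction using the characterization of $\tt$-negligibility from Proposition~\ref{prop:carac_neglible}($\mathit{ii}$) together with the fact that Kurtz random sequences avoid $\Pi^0_1$ null classes. Suppose, toward a contradiction, that $X$ is Kurtz random and that $X \geq_{tt} Y$ for some $Y$ in a $\tt$-negligible $\Pi^0_1$ class~$\C$; let $\Psi$ be a $\tt$-functional witnessing $Y \leq_{tt} X$, so that $\Psi(X) = Y \in \C$.

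The first step is to observe that $\Psi^{-1}(\C)$ is itself a $\Pi^0_1$ class. Since $\Psi$ is total, it defines a (total) computable continuous map from $\cs$ to $\cs$, and writing $\C = \cs \setminus \llb S \rrb$ for some c.e.\ set of strings~$S$, one checks that $\Psi^{-1}(\llb S \rrb)$ is effectively open: given any $\sigma \in S$, the set of $\tau \in \str$ for which $\Psi^\tau \succeq \sigma$ is c.e.\ (using totality of $\Psi$ to get termination on every oracle extending such a $\tau$), and enumerating these uniformly in $\sigma \in S$ produces $\Psi^{-1}(\llb S\rrb)$ as a $\Sigma^0_1$ class. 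Taking complements yields that $\Psi^{-1}(\C)$ is $\Pi^0_1$.

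Next, we invoke Proposition~\ref{prop:carac_neglible}($\mathit{ii}$): because $\C$ is $\tt$-negligible, $\lambda((\C)^{\leq_{tt}}) = 0$. Since $\Psi^{-1}(\C) \subseteq (\C)^{\leq_{tt}}$, we obtain $\lambda(\Psi^{-1}(\C)) = 0$. Thus $\Psi^{-1}(\C)$ is a $\Pi^0_1$ class of Lebesgue measure zero that contains $X$, which directly contradicts the Kurtz randomness of $X$.

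The argument is essentially routine once one notices that $\tt$-functionals pull $\Pi^0_1$ classes back to $\Pi^0_1$ classes; the only mild subtlety is making sure that totality of $\Psi$ is actually used to verify this pullback property (in contrast to the situation for arbitrary Turing functionals, where preimages of closed sets need not be closed). After that, the computation of Lebesgue measure and the appeal to the definition of Kurtz randomness are immediate.
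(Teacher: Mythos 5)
Your proof is correct and follows essentially the same route as the paper: both arguments observe that the preimage of~$\C$ under a total functional is a $\Pi^0_1$ class of Lebesgue measure zero, which no Kurtz random can inhabit. The only cosmetic difference is that you derive the null-measure claim from the characterization $\lambda((\C)^{\leq_{tt}})=0$ of Proposition~\ref{prop:carac_neglible}($\mathit{ii}$), whereas the paper invokes the definition of $\tt$-negligibility directly (the induced computable measure $\lambda_\Psi$ gives $\C$ measure zero), but these amount to the same computation.
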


\begin{proof}
Let $\C$ be a $\tt$-negligible $\Pi^0_1$ class and $\Phi$ a $\tt$-functional. The set $\Phi^{-1}(\C)$ is a $\Pi^0_1$ class that, by $\tt$-negligibility, has Lebesgue measure~$0$. Thus, it contains no Kurtz random. 
\end{proof}

A similar proof can be used to prove the following:

\begin{theorem}
If $X\in\cs$ is weakly 2-random, it cannot compute any member of a negligible $\Pi^0_1$ class. 
\end{theorem}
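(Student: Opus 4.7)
The plan is to mimic the $\tt$-negligible case but replace Kurtz-null $\Pi^0_1$ classes with $\Pi^0_2$-null classes, which are precisely the tests for weak $2$-randomness. Let $\C$ be a negligible $\Pi^0_1$ class, let $X$ be weakly $2$-random, and suppose for contradiction that $X$ computes some $Y\in\C$, say $Y=\Phi_e(X)$ for some Turing functional $\Phi_e$. It then suffices to show that $\Phi_e^{-1}(\C)$ is a $\Pi^0_2$ class of Lebesgue measure $0$; by definition of weak $2$-randomness this will contradict $X\in\Phi_e^{-1}(\C)$.

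That $\lambda(\Phi_e^{-1}(\C))=0$ is immediate from Proposition~\ref{prop:carac_neglible}$(i)$, since $\Phi_e^{-1}(\C)\subseteq (\C)^{\leq_T}$ and the latter is null by negligibility. The remaining task is to verify the arithmetical complexity. Let $T$ be the canonical co-c.e.\ tree associated to $\C$. For $Z\in\cs$, $\Phi_e(Z)\in\C$ holds if and only if for every $n$ the string $\Phi_e^Z\uh n$ is defined and belongs to $T$. For fixed $n$ the set $A_n=\{Z:\Phi_e^Z\uh n{\downarrow}\}$ is $\Sigma^0_1$, and $B_n=\{Z:\Phi_e^Z\uh n{\downarrow}\text{ and }\Phi_e^Z\uh n\notin T\}$ is also $\Sigma^0_1$ (using that the complement of $T$ is c.e.); hence $A_n\cap B_n^c$ is $\Pi^0_2$, and therefore
\[
\Phi_e^{-1}(\C)=\bigcap_{n\in\N}\bigl(A_n\cap B_n^c\bigr)
\]
is a $\Pi^0_2$ class, as required.

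There is no real obstacle here: the two ingredients, namely the measure-zero statement coming from Proposition~\ref{prop:carac_neglible}$(i)$ and the $\Pi^0_2$ complexity of the preimage of a $\Pi^0_1$ class under a Turing functional, are both routine. The only point that deserves a line of care is the complexity computation above, since the naive reading ``$\Phi_e^Z$ is total and stays inside $T$'' mixes a $\Pi^0_2$ totality condition with a $\Pi^0_1$-style tree membership condition; writing the event for each $n$ as the difference $A_n\setminus B_n$ makes it transparent that the whole intersection is $\Pi^0_2$.
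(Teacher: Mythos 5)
Your proof is correct and follows essentially the same approach as the paper: show that $\Phi^{-1}(\C)$ is a null $\Pi^0_2$ class and appeal directly to the definition of weak $2$-randomness, with the measure-zero part coming from Proposition~\ref{prop:carac_neglible}$(i)$. The only small divergence is in verifying the $\Pi^0_2$ complexity: the paper picks a \emph{computable} tree $T$ with $\C=[T]$, so that $(\forall k)(\exists\sigma)(\exists n)[\sigma\in T \wedge |\sigma|=k \wedge \sigma\preceq\Phi^{X\uh n}]$ is manifestly a $\forall\exists$ formula over a decidable matrix, whereas you work with the canonical co-c.e.\ tree and then observe that each $A_n\cap B_n^c$ is $\Pi^0_2$ and that a uniform countable intersection of $\Pi^0_2$ classes is again $\Pi^0_2$; both are routine and yield the same conclusion.
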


\begin{proof}
Let $\C$ be negligible $\Pi^0_1$ class, $\Phi$ be a Turing functional, and $T$ a computable tree such that $\C=[T]$. The set $\Phi^{-1}(\C)$ is a $\Pi^0_2$ class, since
\[
X\in\Phi^{-1}(\C)\;\text{if and only if}\;(\forall k)(\exists \sigma)(\exists n)[\sigma\in T\;\&\;|\sigma|=k\;\&\; \sigma \preceq \Phi^{X \uh n}]
\]
By negligibility, $\Phi^{-1}(\C)$ has Lebesgue measure~$0$ and thus contains no weakly 2-random sequence. 
\end{proof}

We do not know whether this last theorem is optimal, i.e., whether it can be extended to randomness notions that are weaker than weak-2-randomness. \\

Our next result, despite its simplicity, is probably the most interesting of this section. It will help us unify a number of theorems that have appeared in the literature. These are theorems of form \\

\begin{center}
($*$) If $X$ is difference random, then it cannot compute an element of $\C$,
\end{center}
$ $\\
 where $\C$ is a given $\Pi^0_1$ class. Theorem~\ref{thm:diff-pa} is an example of such a theorem, with~$\C$ the class of consistent completions of $\PA$. The same result has been obtained with $\C$ the class of shift-complex sequences (Khan \cite{Khan2013}), the set of compression functions (Greenberg, Miller, Nies \cite{GreenbergMN-ip}), and the set $\DNC_q$ functions for some computable order functions~$q$ (Miller, unpublished). We will give the precise definition of these classes in Section \ref{sec:examples} but the important fact is that all of these classes are deep, and indeed, showing the depth of a $\Pi^0_1$ class is sufficient to obtain a theorem of the form ($*$). \label{page:discussion-diff-deep}
\begin{theorem}\label{thm:diff-deep}
If a sequence~$X$ is difference random, it cannot compute any member of a deep $\Pi^0_1$ class. 
\end{theorem}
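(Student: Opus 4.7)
The plan is to argue by contradiction, assuming $X$ is difference random and $\Phi^X \in \C$ for some Turing functional $\Phi$ and some deep $\Pi^0_1$ class $\C$, and then to build an effective difference test that $X$ fails. Let $T$ denote the canonical co-c.e.\ tree of $\C$, and fix a computable order $h$ such that $\M(T_n) < 2^{-h(n)}$ for every $n$.

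For each $n$ I would introduce the two uniformly $\Sigma^0_1$ classes
\[
\U_n := \{Z \in \cs : \Phi^Z \uh n \downarrow\}, \qquad \V_n := \{Z \in \cs : \Phi^Z \uh n \downarrow \text{ and } \Phi^Z \uh n \notin T_n\},
\]
so that $\U_n \setminus \V_n = \{Z : \Phi^Z \uh n \in T_n\}$. The effective openness of $\V_n$ is the key use of the co-c.e.\ nature of $T$: one enumerates $Z$ into $\V_n$ at any stage $s$ witnessing a convergent computation $\Phi^{Z \uh m} \uh n = \sigma$ together with the ejection of $\sigma$ from the stage-$s$ approximation of $T_n$. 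Since $T_n$ is prefix-free,
\[
\lambda(\U_n \setminus \V_n) = \sum_{\sigma \in T_n} \lambda_\Phi(\sigma) \leq c \cdot \M(T_n) < c \cdot 2^{-h(n)},
\]
where the middle inequality uses Theorem~\ref{thm-MachinesInduceSemiMeasures}---which makes $\lambda_\Phi$ a lower semi-computable continuous semi-measure---together with the universality of $\M$ to supply a constant $c$ with $\lambda_\Phi \leq c \cdot \M$.

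Setting $n_k := h^{-1}(k + \lceil \log c \rceil)$, which is computable in $k$, thus produces a genuine difference test $((\U_{n_k}, \V_{n_k}))_{k \in \N}$. To close the argument I would observe that $X$ must lie in every $\U_n \setminus \V_n$: since $\Phi^X$ is total and $\Phi^X \in \C \subseteq [T]$, each prefix $\Phi^X \uh n$ belongs to $T_n$. This contradicts difference randomness. The sole conceptual subtlety is the indispensability of the canonical co-c.e.\ tree; were we to represent $\C$ by an arbitrary computable tree, $\V_n$ would only be $\Sigma^0_2$ and the construction would collapse. Everything else is a short measure-theoretic accounting on top of the universality of $\M$.
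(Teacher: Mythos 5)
Your proof is correct and follows essentially the same strategy as the paper's: both define the difference test by intersecting the preimage of $T$ (at a computably-chosen level) under $\Phi$, split it as $\U_n\setminus\V_n$ using the co-c.e.\ nature of $T$ to make $\V_n$ effectively open, and bound the measure via $\lambda_\Phi \leq^* \M$. The only cosmetic difference is that you reindex by $n_k = h^{-1}(k+\lceil\log c\rceil)$ to absorb the universality constant explicitly, whereas the paper starts from the equivalent formulation $\M(T_{f(n)})<2^{-n}$ (Remark~\ref{rem:deep-order}) and leaves the constant implicit in the $\leq^*$ notation.
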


\begin{proof}
Let $\C$ be a deep $\Pi^0_1$ class with associated co-c.e.\ tree~$T$ and let~$f$ be a computable function such that $\M(T_{f(n)}) \leq 2^{-n}$. Let~$X$ be a sequence that computes a member of $\C$ via a Turing functional $\Phi$. Let
\[
\mathcal{Z}_n = \{Z \, : \, \Phi^Z \uh f(n)  \downarrow  \, \in T_{f(n)}\}
\]
The set $\mathcal{Z}_n$ can be written as the difference $\U_n \setminus \V_n$ of two effectively open sets (uniformly in~$n$) with $\U_n = \{Z \, : \, \Phi^Z \uh f(n)  \downarrow\}$ and $\V_n= \{Z \, : \, \Phi^Z \uh f(n)  \downarrow  \notin T_{f(n)}\}$.

Moreover, by definition of the semi-measure induced by $\Phi$,
\[
\lambda(\mathcal{Z}_n) \leq \lambda_\Phi(T_{f(n)}) \leq^\times \M(T_{f(n)}) \leq 2^{-n}.
\]
The sequence $\{\mathcal{Z}_n\}_{n\in\N}$ thus yields a difference test.  Therefore, the sequence~$X$, which by assumption belongs to all~$\mathcal{Z}_n$, is not difference random.

\end{proof}

We remark that the converse does not hold: i.e., there is a class~$\D$ such that (i) $\D$ is not deep but (ii) no difference random real can compute an element of~$\D$. Indeed, take a deep class~$\C$ and apply Theorem~\ref{thm:depth-not-Turing} to get a class~$\D$ that is not deep but whose members have the same Turing degrees as the elements of~$\C$. Thus, no difference random real can compute an element of~$\D$. Also, the hypothesis of difference randomness cannot be improved. Indeed, if $X$ is difference random but not Martin-L\"of random, then it computes $\emptyset'$, and thus it can compute a member of any non-empty $\Pi^0_1$ class.

\section{Depth, negligibility, and mass problems}\label{sec:mass-problems}

In this section we discuss depth and negligibility in the context of mass problems, i.e., in the context of Muchnik and Medvedev reducibility.  Both Muchnik and Medvedev reducibility are generalizations of Turing reducibility.  Whereas Turing reducibility is defined in terms of a pair of sequences, both Muchnik and Medevedev reducibility are defined in terms of a pair of collections of sequences.  In what follows, we will consider these two reducibilities when restricted to $\Pi^0_1$ subclasses of $\cs$.  We will follow the notation of the survey~\cite{Simpson2011}, to which we refer the reader for a thorough exposition of mass problems in the context of $\Pi^0_1$ classes. 

Let $\C,\D\subseteq\cs$ be $\Pi^0_1$ classes.
We say that $\C$ is  \emph{Muchnik reducible} to $\D$, denoted $\C\leq_w\D$, if for every $X\in\D$ there exists a Turing functional $\Phi$ such that (i) $X\in\dom(\Phi)$ and $\Phi(X)\in\C$.  Moreover, $\C$  is  \emph{Medvedev reducible} to $\D$, denoted $\C\leq_s\D$, if $\C$ is Muchnik reducible to $\D$ via a single Turing functional, i.e., there exists a Turing functional $\Phi$ such that (i) $\D\subseteq\dom(\Phi)$ and $\Phi(\D)\subseteq\C$.  

Just as Turing reducibility gives rise to a degree structure, we can define degree structures from $\leq_w$ and $\leq_s$. We say that $\C$ and $\D$ are \emph{Muchnik equivalent} (resp.\ \emph{Medvedev equivalent}), denoted $\C\equiv_w\D$ (resp.\ $\C\equiv_s\D$) if and only if $\C\leq_w\D$ and $\D\leq_w\C$ (resp.\ $\C\leq_s\D$ and $\D\leq_s\C$).  The collections of Muchnik and Medvedev degrees given by the equivalence classes under $\equiv_w$ and $\equiv_s$ are denoted $\E_w$ and $\E_s$, respectively.

Both $\E_w$ and $\E_s$ are lattices, unlike the Turing degrees, which only form an upper semi-lattice.  We define the meet and join operations as follows.  Given $\Pi^0_1$ classes $\C,\D\subseteq\cs$, $\sup(\C,\D)$ is the $\Pi^0_1$ class $\{X\oplus Y:X\in\C\;\&\;Y\in\D\}$.  Furthermore, we define $\inf(\C,\D)$ to be the $\Pi^0_1$ class $\{0^\frown X: X\in\C\}\cup\{1^\frown Y:Y\in\D\}$.  One can readily check that the least upper bound of $\C$ and $\D$ in $\E_w$ is the Muchnik degree of $\sup(\C,\D)$ while their greatest lower bound is the Muchnik degree $\inf(\C,\D)$, and similarly for $\E_s$.

Recall that a filter $\F$ in a lattice $(\L,\leq,\inf,\sup)$ is a subset that satisfies the following two conditions:  (i) for all $x,y\in\L$, if $x\in\F$ and $x\leq y$, then $y\in\F$, and (ii) for all $x,y\in\F$, $\inf(x,y)\in\F$.



The goal of this section is to study the role of depth and negligibility in the structures $\E_s$ and $\E_w$. Let us start with an easy result.

\begin{theorem}\label{thm:mass_prolems_negligibility}
The collection of negligible $\Pi^0_1$ classes forms a filter in both $\E_s$ and $\E_w$.
\end{theorem}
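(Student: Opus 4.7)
The plan is to check both filter axioms directly in $\E_w$, which automatically gives the result for $\E_s$ since $\C \leq_s \D$ implies $\C \leq_w \D$ (and the meet operation in both lattices is represented by the same $\Pi^0_1$ class $\inf(\C,\D)$).

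The main observation is a monotonicity lemma for the operator $\C \mapsto (\C)^{\leq_T}$: if $\C \leq_w \D$, then every $X \in \D$ computes some member of $\C$, so $\D \subseteq (\C)^{\leq_T}$, which by upward closure under $\leq_T$ yields $(\D)^{\leq_T} \subseteq (\C)^{\leq_T}$. Combined with Proposition~\ref{prop:carac_neglible}(i), this shows that negligibility is preserved downward along $\leq_w$: if $\C$ is negligible then $\lambda((\C)^{\leq_T})=0$, hence $\lambda((\D)^{\leq_T})=0$, and so $\D$ is negligible. This single fact gives both that negligibility is well-defined on Muchnik (hence Medvedev) degrees and that the set of negligible degrees is upward closed, verifying filter axiom~(i).

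For axiom~(ii), I would show directly that if $\C,\D$ are negligible $\Pi^0_1$ classes, then so is $\inf(\C,\D) = \{0^\frown X : X \in \C\} \cup \{1^\frown Y : Y \in \D\}$. A sequence $Z$ computes an element of $\inf(\C,\D)$ iff it computes a sequence of the form $0^\frown X$ with $X \in \C$ or $1^\frown Y$ with $Y \in \D$, and such computations are Turing-equivalent to computing $X$ or $Y$ respectively; therefore
\[
(\inf(\C,\D))^{\leq_T} = (\C)^{\leq_T} \cup (\D)^{\leq_T}.
\]
This set has Lebesgue measure at most $\lambda((\C)^{\leq_T}) + \lambda((\D)^{\leq_T}) = 0$ by Proposition~\ref{prop:carac_neglible}(i) applied to $\C$ and $\D$, so $\inf(\C,\D)$ is negligible by the same proposition applied in the other direction. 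Since $\inf(\C,\D)$ represents the meet in both $\E_w$ and $\E_s$, the collection of negligible degrees is closed under finite meets in each lattice. There is no real obstacle here; both axioms reduce to the measure-theoretic characterization in Proposition~\ref{prop:carac_neglible}(i) together with the trivial observation that the Turing upward closure operator is monotone and distributes over disjoint union up to prefixing by a bit.
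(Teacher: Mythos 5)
Your proof is correct, and for the upward-closure axiom it takes a cleaner route than the paper does. The paper decomposes $\D$ as $\bigcup_i \D_i$ where $\D_i = \{X\in\D : X\in\dom(\Phi_i)\;\&\;\Phi_i(X)\in\C\}$, applies countable subadditivity of $\lambda$ to locate some $k$ with $\lambda\big((\D_k)^{\leq_T}\big) > 0$, and only then invokes the fact that $\D_k\subseteq(\C)^{\leq_T}$. You short-circuit all of this with the monotonicity observation: $\C\leq_w\D$ already gives $\D\subseteq(\C)^{\leq_T}$ directly, and since the operator $\E \mapsto (\E)^{\leq_T}$ is idempotent (by transitivity of $\leq_T$) one gets $(\D)^{\leq_T}\subseteq\big((\C)^{\leq_T}\big)^{\leq_T} = (\C)^{\leq_T}$ in one step, so $\lambda\big((\C)^{\leq_T}\big)=0$ forces $\lambda\big((\D)^{\leq_T}\big)=0$. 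Both arguments ultimately rest on Proposition~\ref{prop:carac_neglible}(i); yours simply avoids an unnecessary countable decomposition and is the shorter of the two. Your handling of the meet axiom --- establishing $(\inf(\C,\D))^{\leq_T}=(\C)^{\leq_T}\cup(\D)^{\leq_T}$ via the fact that prefixing by a single bit preserves Turing degree, then using subadditivity --- is essentially identical to the paper's. The reduction of the $\E_s$ case to the $\E_w$ case (via $\leq_s$ implying $\leq_w$ plus the meet being represented by the same class in both lattices) is also sound and mirrors what the paper does implicitly.
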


\begin{proof}
Let $\C$ and $\D$ be negligible $\Pi^0_1$ classes. By Proposition \ref{prop:carac_neglible}, we have that $\lambda(\C^{\leq_T})=0$ and $\lambda(\D^{\leq_T})=0$. But since 
\[
\inf(\C,\D)^{\leq_T}=\C^{\leq_T}\cup\D^{\leq_T},
\]
it follows that $\lambda(\inf(\C,\D)^{\leq_T})=0$, which shows that $\inf(\C,\D)$ is negligible. Thus, the degrees of negligible classes in both $\E_w$ and $\E_s$ are closed under $\inf$.

%
%

Let $\mathcal{D}$ be non-negligible $\Pi^0_1$ and  $\mathcal{C}\leq_w\mathcal{D}$.  For each $i$, we define
\[
\D_i:=\{X\in\D:X\in\dom(\Phi_i)\;\&\;\Phi_i(X)\in\C\}.
\]
Since $\C\leq_w\D$, it follows that $\D=\bigcup_i\D_{i}$.  Furthermore, we have $\D^{\leq_T}=\bigl(\bigcup_i\D_{i}\bigr)^{\leq_T}=\bigcup_i\D_{i}^{\leq_T}$.  Since $\D$ is non-negligible, we have
\[
0<\lambda(\D^{\leq_T})\leq\sum_i\lambda(\D_{i}^{\leq_T}),
\]
and thus $\lambda(\D_{k}^{\leq_T})>0$ for some $k$.  But since $\Phi_{k}(\D_{k})\subseteq\C$, it follows that $\lambda(\C^{\leq_T})>0$, thus $\C$ is non-negligible.  Thus, negligibility is closed upwards under $\leq_w$ (and a fortiori, under $\leq_s$ as well).
\end{proof}

\begin{remark}
Simpson proved in~\cite{Simpson2006} that in $\E_w$, the complement of the filter of negligible classes is in fact a principal ideal, namely the ideal generated by the class $\inf(\sPA,2\mathcal{RAN})$, where $2\mathcal{RAN}$ is the class of $2$-random sequences, i.e. the sequences that are Martin-L\"of random relative to $\emptyset'$. 
\end{remark}

For the next two theorems, we need the following fact.

\begin{fact}\label{fact:tt-medvedev}
Let $\C$ and $\D$ be $\Pi^0_1$ classes such that $\C\leq_s\D$.  Then there is a total Turing functional $\Psi$ such that $\Psi(\D)\subseteq\C$.
\end{fact}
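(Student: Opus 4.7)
The plan is to modify any witness $\Phi$ of $\C\leq_s\D$ into a total functional $\Psi$ that still maps~$\D$ into~$\C$. The key observation is that since $\Psi$ only needs to satisfy $\Psi(\D)\subseteq\C$, its behavior outside~$\D$ is completely unconstrained, so the real task is to extract a computable use bound for~$\Phi$ on~$\D$ and then ``fill in'' arbitrary bits whenever the input prefix is detected as lying outside~$\D$.

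First I would use compactness of~$\D$ to show: for every~$n$, there exists a level~$K$ such that $|\Phi^\sigma|\geq n$ for every $\sigma\in T$ with $|\sigma|=K$, where~$T$ is the canonical co-c.e.\ tree of~$\D$. Indeed, the clopen sets $\V_k=\{X:|\Phi^{X\uh k}|\geq n\}$ form an increasing cover of~$\D$ (since $\Phi$ is total on~$\D$), so by compactness $\D\subseteq\V_K$ for some~$K$, which gives the claim.

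Next I would promote this to an effective statement via the following semi-decision procedure: search in parallel over pairs $(k,s)$ with $s\geq k$ for one such that every $\sigma\in\{0,1\}^k$ satisfies \emph{either} $\sigma\notin T[s]$ \emph{or} $|\Phi^\sigma|[s]\geq n$. Such a pair exists because, for the~$K$ given by compactness, every $\sigma\in\{0,1\}^K\setminus T$ eventually leaves~$T[s]$ while every $\sigma\in T$ of length~$K$ eventually has $|\Phi^\sigma|[s]\geq n$; conversely, whenever the condition is verified, every $\sigma\in T$ of length~$k$ lies in $T[s]$, so the second clause must hold and $|\Phi^\sigma|\geq n$ follows. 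This produces computable $k(n),s(n)$, and after monotonization I may assume $k(n)$ is nondecreasing in~$n$.

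Finally, I would define $\Psi$ by setting the $n$-th output bit $\Psi^X(n-1)$ to be $\Phi^{X\uh k(n)}(n-1)$ whenever $X\uh k(n)\in T[s(n)]$ at the witnessing stage, and $0$ otherwise; this check is decidable at the finite stage~$s(n)$. The resulting $\Psi$ is a bona fide total (in fact $\tt$) Turing functional, and for $X\in\D$ every prefix of~$X$ lies in $T\subseteq T[s(n)]$, so the first clause always applies and $\Psi^X=\Phi^X\in\C$. I expect the main obstacle to be justifying this effective-search step: the argument crucially exploits the asymmetric approximability of~$T$ (strings outside~$T$ eventually leave the approximation while strings inside~$T$ never do) to convert a purely topological compactness bound into a computable use function.
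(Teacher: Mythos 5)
Your construction is correct and is exactly the standard argument behind the well-known fact that the paper invokes in its one-line justification (totalizing a functional that is total on a $\Pi^0_1$ class): compactness gives a use bound on $\D$, the co-c.e.\ approximation of the canonical tree makes the bound effectively findable, and the default-value fill-in outside $T[s(n)]$ yields a $\tt$-functional agreeing with $\Phi$ on $\D$. The monotonicity/consistency of the resulting c.e.\ set of pairs holds because each output bit $n-1$ is always computed by the same check on $X\uh k(n)$ at the fixed stage $s(n)$, so no conflict can arise across different output lengths.
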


\begin{theorem}\label{thm:filter-medvedev}
The collection of deep $\Pi^0_1$ classes forms a filter in $\E_s$.
\end{theorem}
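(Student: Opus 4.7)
The plan is to verify directly the two defining axioms for a filter in the lattice $\E_s$: closure under the meet operation $\inf$ and upward closure under $\leq_s$. Depth is automatically well-defined on Medvedev degrees once upward closure is established, because two Medvedev-equivalent classes satisfy $\C \leq_s \D$ and $\D \leq_s \C$, so either both are deep or neither is.

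For closure under meets, let $\C$ and $\D$ be deep $\Pi^0_1$ classes with canonical co-c.e.\ trees $A$ and $B$. The canonical co-c.e.\ tree $T$ of $\inf(\C,\D) = \{0^\frown X : X \in \C\} \cup \{1^\frown Y : Y \in \D\}$ satisfies $T_n = \{0^\frown\sigma : \sigma \in A_{n-1}\} \cup \{1^\frown\tau : \tau \in B_{n-1}\}$ for all $n \geq 1$. The key inequality is $\M(b^\frown\sigma) \leq^* \M(\sigma)$ for each $b \in \{0,1\}$: the map $\sigma \mapsto \M(b^\frown\sigma)$ coincides with the semi-measure $\lambda_\Psi$ induced by the Turing functional $\Psi$ that runs $\widehat\Phi$ and strips the leading bit whenever that bit equals $b$, so the inequality follows from universality of $\M$. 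Summing over $A_{n-1}$ and $B_{n-1}$ yields $\M(T_n) \leq^* \M(A_{n-1}) + \M(B_{n-1})$, and combining the depth witnesses of $\C$ and $\D$ via the flexible formulation in Remark~\ref{rem:deep-order} produces a computable order witnessing the depth of $\inf(\C,\D)$.

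For upward closure, suppose $\C$ is deep and $\C \leq_s \D$. By Fact~\ref{fact:tt-medvedev} the reduction can be taken via a Turing functional $\Phi$ total on $\cs$, and such a $\Phi$ admits a computable modulus $u$ with $|\Phi^\sigma| \geq n$ whenever $|\sigma| \geq u(n)$ (by a standard compactness argument applied to the c.e.\ enumeration of $\Phi$). Fix $n$: for any $\sigma \in B_{u(n)}$, extendibility yields $X \succeq \sigma$ in $\D$, and since $\Phi(\D) \subseteq \C$, the string $\Phi(\sigma)\uh n$ is a length-$n$ prefix of $\Phi(X) \in \C$, hence lies in $A_n$. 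Consequently, whenever $\widehat\Phi(Y)$ extends some $\sigma \in B_{u(n)}$, the composition $(\Phi \circ \widehat\Phi)(Y)$ has its $n$-prefix in $A_n$. Taking $\lambda$-measures of these events (disjoint because $B_{u(n)}$ is prefix-free) and applying universality of $\M$ to the lower semi-computable continuous semi-measure $\lambda_{\Phi \circ \widehat\Phi}$ gives $\M(B_{u(n)}) \leq \lambda_{\Phi\circ\widehat\Phi}(A_n) \leq^* \M(A_n)$. The depth order for $\C$ then furnishes a computable depth witness for $\D$ via Remark~\ref{rem:deep-order}.

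The main obstacle is establishing the computable modulus for a total Turing functional---which relies on compactness of $\cs$ applied to the c.e.\ enumeration of $\Phi$---and then cleanly absorbing the multiplicative constants from universality and the re-indexing $m \mapsto u(n)$ into the definition of depth; the latter is routine once one uses the $f$-formulation in Remark~\ref{rem:deep-order} together with monotonicity of $n \mapsto \M(A_n)$.
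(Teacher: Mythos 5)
Your proposal is correct and follows essentially the same route as the paper: for meets, you split the co-c.e.\ tree of $\inf(\C,\D)$ into its two shifted halves and use $\M(b^\frown\sigma)\leq^*\M(\sigma)$ (which the paper leaves implicit); for upward closure, you invoke Fact~\ref{fact:tt-medvedev} to get a total functional with a computable use bound $u$, observe $\llb B_{u(n)}\rrb\subseteq\Phi^{-1}(\llb A_n\rrb)$, compose with $\widehat\Phi$, and appeal to the universality of $\M$ — precisely the chain of inequalities in the paper's proof. Your write-up is merely more explicit about the modulus of uniformity and the one-bit-shift bound on $\M$.
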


\begin{proof}
Let $\C$ and $\D$ be deep $\Pi^0_1$ classes with associated co-c.e.\ trees $S$ and~$T$, respectively.  Moreover, let $g$ and $h$ be computable order functions such that $\M(S_n)\leq 2^{-g(n)}$ and $\M(T_n) \leq 2^{-h(n)}$.  We define $f(n)=\min\{g(n),h(n)\}$, which is clearly a computable order function. It follows immediately that
$\M(S_n)\leq 2^{-f(n)}$ and $\M(T_n) \leq 2^{-f(n)}$.  

Now the co-c.e.\ tree associated with $\inf(\C,\D)$ is
\[
R=\{0^\frown\sigma:\sigma\in S\}\cup\{1^\frown\tau:\tau\in T\}.
\]
Consider the class~$\inf(\C,\D)$.  Setting $0^\frown S_n=\{0^\frown\sigma:\sigma\in S_n\}$ and $1^\frown T_n=\{1^\frown\sigma:\sigma\in T_n\}$ for each $n$, the co-c.e.\ tree associated to $\inf(\C,\D)$ is $0^\frown S_n\, \cup\, 1^\frown T_n$. Moreover,
\[
\M(0^\frown S_n \cup 1^\frown T_n) = \M(0^\frown S_n) + \M(1^\frown T_n) \leq^\times 2^{-f(n)} + 2^{-f(n)}
\]
which shows that $\inf(\C,\D)$ is deep. 

%

Next, suppose that $\mathcal{C}$ is a deep $\Pi^0_1$ class  and $\mathcal{D}$ is a $\Pi^0_1$ class satisfying  $\mathcal{C} \leq_s \mathcal{D}$ via the Turing functional $\Psi $, which we can assume to be total by Fact \ref{fact:tt-medvedev}. Let~$S$ and~$T$ be the co-c.e.\ trees associated to $\C$ and $\D$, respectively, and let $g$ be a computable order function such that $\M(S_n)\leq 2^{-g(n)}$.  Since $\Psi$ is total, its use is bounded by some computable function~$f$.  It follows that for every $\sigma\in T_{f(n)}$, there is some $\tau\in S_n$ such that $(\sigma,\tau)\in S_\Psi$ (the c.e.\ set of pairs of strings that generates $\Psi$).  Thus $\llb T_{f(n)}\rrb\subseteq\Psi^{-1}(\llb S_n\rrb)$.  Now let~$\Phi$ be a universal Turing functional such that $\M=\lambda_\Phi$.  Then:
\[
\M(T_{f(n)})=\lambda(\Phi^{-1}( T_{f(n)}))\leq\lambda(\Phi^{-1}(\llb T_{f(n)}\rrb)\leq\lambda(\Phi^{-1}(\Psi^{-1}(\llb S_n\rrb)))\leq\lambda_{\Psi\circ\Phi}(S_n),
\]
where the last inequality holds because $\Phi^{-1}(\Psi^{-1}(\llb S_n\rrb))=(\Psi\circ\Phi)^{-1}(\llb S_n\rrb)\subseteq\dom(\Psi\circ\Phi)$ and hence $(\Psi\circ\Phi)^{-1}(\llb S_n\rrb)\subseteq(\Psi\circ\Phi)^{-1}(S_n)$. Since $\lambda_{\Psi\circ\Phi}\leq^\times \M$, we have
\[
\M(T_{f(n)})\leq\lambda_{\Psi\circ\Phi}(S_n)\leq^\times \M(S_n)\leq 2^{-g(n)}.
\]
Thus, $\M(T_{f \circ g^{-1}(n)}) \leq 2^{-n}$, which shows that $\D$ is deep.


\end{proof}

The invariance of the notion of depth under Medvedev-equivalence is of importance for the next section. There, we prove that certain classes of objects, which are not necessarily infinite binary sequences, are deep. To do so, we fix a certain encoding of these objects by infinite binary sequences and prove the depth of the corresponding encoded class. By the above theorem, the particular choice of encoding is irrelevant: if the class is deep for an encoding, it will be deep for another encoding, as long as switching from the first encoding to the second one can be done computably and uniformly. 

One could ask whether we have a similar result in the lattice~$\E_w$. However, Theorem~\ref{thm:depth-not-Turing} shows that depth is not invariant under Muchnik equivalence: if we apply this theorem to a deep class~$\C$, we get a $\Pi^0_1$ class~$\D$ which is clearly Muchnik equivalent to~$\C$ but is not deep itself. Thus depth is Medvedev invariant but not Muchnik invariant.

More surprisingly, $\tt$-depth~\emph{is} a both Medvedev invariant and Muchnik invariant, as the $\tt$-deep classes form a filter in both lattices.

\begin{theorem}\label{thm:mass_problems_tt-deep}
The collection of $\tt$-deep $\Pi^0_1$ classes forms a filter in both $\E_s$ and $\E_w$. 
\end{theorem}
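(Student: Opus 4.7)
The plan is to use the equivalence of $\tt$-depth with $\tt$-negligibility from Theorem \ref{prop:carac-tt-negl}, reducing everything to showing $\mu(\X) = 0$ for every computable probability measure $\mu$. Closure of the filter under $\inf$ is direct: if $\C, \D$ are $\tt$-negligible and $\mu$ is a computable probability measure, let $\mu_i(\sigma) = \mu(i^\frown\sigma)/\mu(\llb i\rrb)$ for $i\in\{0,1\}$ (a computable probability measure when $\mu(\llb i \rrb) > 0$; the corresponding summand below vanishes trivially otherwise). Then
\[
\mu(\inf(\C,\D)) = \mu(\llb 0\rrb)\cdot\mu_0(\C) + \mu(\llb 1\rrb)\cdot\mu_1(\D) = 0
\]
by $\tt$-negligibility of $\C$ and $\D$.

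For upward closure in $\E_s$, suppose $\C \leq_s \D$ via a Turing functional $\Phi$ total on $\D$. By Fact \ref{fact:tt-medvedev}, extend $\Phi\uhr\D$ to a total Turing functional $\Psi$ on $\cs$ still satisfying $\Psi(\D)\subseteq\C$. Since $\Psi$ is total, $\Psi_*\mu$ is a computable probability measure for every computable $\mu$, and $\Psi_*\mu(\C) \geq \mu(\Psi^{-1}(\C)) \geq \mu(\D)$. By $\tt$-negligibility of $\C$, $\Psi_*\mu(\C) = 0$, and hence $\mu(\D) = 0$.

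The main obstacle is upward closure in $\E_w$, because the Muchnik reduction uses many Turing functionals, none of which need be total on all of $\D$. The idea is to extract from $\D$ a $\Pi^0_1$ subclass of positive $\mu$-measure on which a single $\Phi_e$ becomes total with image in $\C$, and then apply the $\E_s$ argument. Assuming $\mu(\D) > 0$ for contradiction, write $\D_e = \{X \in \D : \Phi_e^X \in \C\}$; since $\D = \bigcup_e \D_e$, pick $e$ with $\mu(\D_e) > 0$. Let $T$ be the canonical co-c.e.\ tree of $\C$, and for $n, m \in \N$ set
\[
V_{n,m} = \{X : \Phi_e^X \uhr n \text{ converges by stage } m \text{ to some } \sigma \in T\}.
\]
Each $V_{n,m}$ is $\Pi^0_1$: its complement is the union of the clopen set where $\Phi_e^X \uhr n$ has not converged by stage $m$ with the c.e.\ open set where it has converged by stage $m$ to a string eventually enumerated out of $T$. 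For each $X \in \D_e$ one has $X \in V_{n,m}$ for all sufficiently large $m$, so $\mu(\D_e \cap V_{n,m}) \nearrow \mu(\D_e)$ as $m \to \infty$. Choose $m_n$ with $\mu(\D_e \setminus V_{n,m_n}) < \mu(\D_e) \cdot 2^{-n-2}$ and set $\D' = \D \cap \bigcap_n V_{n,m_n}$; then $\D'$ is $\Pi^0_1$, $\mu(\D') \geq \mu(\D_e)/2 > 0$, and by construction $\Phi_e$ is total on $\D'$ with $\Phi_e(\D') \subseteq \C$. Applying Fact \ref{fact:tt-medvedev} and then the $\E_s$ pushforward argument to $\D'$ yields a computable probability measure with positive mass on $\C$, contradicting $\tt$-negligibility of $\C$.
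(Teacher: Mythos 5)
Your reduction of $\tt$-depth to $\tt$-negligibility via Theorem~\ref{prop:carac-tt-negl} is a sensible starting point, and two of your three parts are sound. The closure under $\inf$ by decomposing $\mu$ into the two conditional measures $\mu_0,\mu_1$ is correct and a bit more direct than the paper's argument (which instead observes that a $\mu$-Martin-L\"of random element of $\inf(\C,\D)$ yields a $\mu_i$-random element of $\C$ or $\D$). The upward closure in $\E_s$ via the pushforward $\Psi_*\mu$, using Fact~\ref{fact:tt-medvedev}, is also correct and is essentially the paper's argument.

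The gap is in the $\E_w$ case, precisely at the claim that $\D' = \D \cap \bigcap_n V_{n,m_n}$ is a $\Pi^0_1$ class. The sets $V_{n,m}$ are uniformly $\Pi^0_1$, so $\bigcap_n V_{n,m_n}$ is $\Pi^0_1$ only if $(m_n)_{n\in\N}$ is a \emph{computable} sequence. But your $m_n$ is chosen to satisfy $\mu(\D_e \setminus V_{n,m_n}) < \mu(\D_e)\cdot 2^{-n-2}$, a condition you cannot decide: $\D_e$ is only $\Pi^0_2$, so $\mu(\D_e)$ and $\mu(\D_e\cap V_{n,m})$ are not computable reals (indeed $\mu(V_{n,m})$ itself is merely upper semi-computable, so you cannot even detect when it has become large). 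The obstruction is essentially a hyperimmunity phenomenon: the modulus $n\mapsto m_n$ controlling how long the computations $\Phi_e^X\uhr n$ take on the relevant $X\in\D_e$ need not be dominated by any computable function, and without such a bound there is no way to carve out a $\Pi^0_1$ subclass of positive measure on which $\Phi_e$ is total. The paper sidesteps this exactly by invoking the hyperimmune-free basis theorem: a $\Pi^0_1$ class of positive $\mu$-measure contains a $\mu$-Martin-L\"of random $X$ of hyperimmune-free degree, and for such an $X$ any $Y\leq_T X$ is automatically $\leq_{\tt} X$, which supplies the computable use bound that your construction is missing. Combined with the preservation-of-randomness theorem, this produces a $\mu_\Psi$-Martin-L\"of random element of $\C$, contradicting Theorem~\ref{prop:carac-tt-negl}. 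Without some substitute for the hyperimmune-free basis theorem, your non-effective choice of $(m_n)$ cannot be repaired.
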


\begin{proof}

Suppose that $\C \leq_w \D$ and $\D$ is not $\tt$-deep.  By definition, this means that $\D$ has positive $\mu$-measure for some computable probability measure~$\mu$. Let $\{\mathcal{U}_k\}_{k \in \N}$ be the universal $\mu$-Martin-L\"of test and define $\mathcal{R}_k$ to be the complement of $\mathcal{U}_k$ (since $\U_k$ is $\Sigma^0_1$, $\mathcal{R}_k$ is $\Pi^0_1$). Since $\mu(\mathcal{R}_k) \geq 1- 2^{-k}$ for every $k$, there must be a~$j$ such that $\mu(\D \cap \mathcal{R}_j) > 0$ and in particular $\D \cap \mathcal{R}_j \not= \emptyset$. By the hyperimmune-free basis theorem, there is some $X\in\D \cap \mathcal{R}_j$ of hyperimmune-free Turing degree. Since $\C \leq_w \D$, $X$ must compute some element~$Y$ of~$\C$. But since~$X$ is of hyperimmune-free degree, $X$ in fact $\tt$-computes $Y$, i.e., $\Psi(X)=Y$ for some total Turing functional~$\Psi$. 

Since~$X$ is $\mu$-Martin-L\"of random, by the preservation of randomness theorem (see, for instance, Theorem 3.2 in \cite{BienvenuP2012}), $Y$ is Martin-L\"of random with respect to the computable measure $\mu_\Psi$ defined by $\mu_\Psi(\sigma):=\mu(\Psi^{-1}(\sigma))$ for every $\sigma$.  Thus, by Proposition~\ref{prop:carac-tt-negl}, $\C$ is not $\tt$-deep. This shows, a fortiori, that if $\C \leq_s \D$ and~$\D$ is not $\tt$-deep, then $\C$ is not $\tt$-deep. Thus $\tt$-depth is closed upwards in $\E_w$ and $\E_s$ and in particular is compatible with the equivalence relations $\equiv_s$ and $\equiv_w$. \\

Next, suppose that $\C$ and $\D$ are $\tt$-deep classes but that $\inf(\C,\D)$ is not $\tt$-deep (recall that the $\inf$ operator is the same for both $\E_s$ and $\E_w$).  Then by Theorem~\ref{prop:carac-tt-negl}, $\inf(\C,\D)$ contains a sequence $X$ that is $\mu$-Martin-L\"of random for some computable measure $\mu$.  Then for $i=0,1$ we define computable measures $\mu_i$ such that $\mu_i(\sigma)=\mu(i^\frown\sigma)/\mu(i)$ for every $\sigma$ (where this ratio is equal to~$0$ if $\mu(i)=0$).  It is routine to check that $Y$ is $\mu_i$-Martin-L\"of random if and only if $i^\frown Y$ is $\mu$-Martin-L\"of random for $i=0,1$.  Since $X=i^\frown Z$ for some $i=0,1$ and $Z\in\cs$, it follows that $Z$ is $\mu_i$-Martin-L\"of random.  But then $Z$ is contained in either $\C$ or $\D$, which contradicts our hypothesis that $\C$ and $\D$ are both $\tt$-deep.

\end{proof}


\section{Examples of deep $\Pi^0_1$ classes}\label{sec:examples}

In this section, we provide a number of examples of deep $\Pi^0_1$ classes that naturally occur in computability theory and algorithmic randomness. We give a uniform treatment of all these classes, i.e., we give a general method to prove the depth of $\Pi^0_1$ classes. 

\smallskip


\subsection{Consistent completions of Peano Arithmetic}


As mentioned in Section \ref{sec:neg}, Jockusch and Soare proved in \cite{JockuschS1972} that the $\Pi^0_1$ class $\sPA$ of consistent completions of $\PA$ is negligible.  However, as shown implicitly by Levin in \cite{Levin2013} and Stephan in \cite{Stephan2002}, $\sPA$ is also deep.   We will reproduce this result here.  

Following both Levin and Stephan, we will use fact that the class of consistent completions of $\PA$ is Medvedev equivalent to the class of total extensions of a universal, partial-computable $\{0,1\}$-valued function.  Thus, by showing the latter class is deep, we thereby establish that the former class is deep (via Theorem~\ref{thm:filter-medvedev}).
 
\begin{theorem}[Levin \cite{Levin2013}, Stephan \cite{Stephan2002}] \label{thm:levin-stephan}
Let $(\phi_e)_{e\in\N}$ be a standard enumeration of all $\{0,1\}$-valued partial computable functions.  Let~$u$ be a  function that is universal for this collection, e.g., defined by $u(\langle e,x\rangle) = \phi_e(x)$. Then the class~$\C$ of total extensions of~$u$ is a deep $\Pi^0_1$ class.  
\end{theorem}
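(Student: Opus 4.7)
Let $T$ denote the canonical co-c.e.\ tree associated with $\C$: its level-$n$ strings are exactly those $\sigma\in\{0,1\}^n$ satisfying $\sigma(i) = u(i)$ for every $i < n$ with $u(i)\downarrow$. By Remark~\ref{rem:deep-order}, it will suffice to exhibit a computable strictly increasing function $f$ such that $\M(T_{f(k)}) < 2^{-k}$ for every $k$.

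The plan is to carry out a diagonalization via the recursion theorem: obtain an index $e^*$ and simultaneously design the partial computable function $\phi_{e^*}$, exploiting the fact that universality forces $u(\langle e^*, k\rangle) = \phi_{e^*}(k)$ whenever the right-hand side converges. Setting $m_k := \langle e^*, k\rangle$ and $n_k := m_k + 1$ (both computable and, for a suitable pairing, strictly increasing in $k$), I would arrange that, once $\phi_{e^*}(k)$ propagates into $u$ and hence into $T$, the $\M$-mass of $T_{n_k}$ becomes at most half that of $T_{n_{k-1}}$. Iterating this bound $k$ times then yields $\M(T_{n_k}) \leq 2^{-k + O(1)}$, and $f(k) := n_{k+c}$ works for a suitable constant $c$.

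For the construction itself, the algorithm computing $\phi_{e^*}(k)$ would first recursively force $\phi_{e^*}(0), \ldots, \phi_{e^*}(k-1)$ to converge, so that the approximation of $T$ it subsequently inspects already reflects all earlier commitments. It would then enumerate $\M[s]$ from below and $T[s]$ from above in parallel while monitoring the partition
\[
T_{n_k}[s] = A^0_{k,s} \sqcup A^1_{k,s}, \qquad A^b_{k,s} := \{\sigma \in T_{n_k}[s] : \sigma(m_k) = b\}.
\]
A stopping rule would identify the heavier side in the $\M$-approximation; outputting $\phi_{e^*}(k) := b$ where $b$ labels the lighter side then forces $u(m_k) = b$, deleting the heavier side from $T$. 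Since every $\phi_{e^*}(j)$ with $j > k$ acts at a coordinate $m_j \geq n_k$ and therefore does not disturb $T$ at level $n_k$, the halving $\M(T_{n_k}) \leq \M(T_{n_{k-1}})/2$ persists in the limit.

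The hard part will be the design of the stopping rule, since the commitment to $\phi_{e^*}(k)$ is made at a finite stage where only approximations of $\M$ and $T$ are available, whereas the halving bound concerns the limit quantities. Suitable padding of the levels $n_k$ together with precision parameters in the stopping rule should let one absorb these approximation errors so that the halvings compound correctly over the infinitely many rounds; this book-keeping constitutes the technical heart of the argument.
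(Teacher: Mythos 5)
Your plan to achieve a one-shot halving $\M(T_{n_k}) \leq \M(T_{n_{k-1}})/2$ with a single controlled coordinate $m_k$ per level cannot be made to work, and the difficulty is not merely book-keeping. The obstruction is that $\M$ is only lower semi-computable: once you commit to $\phi_{e^*}(k) := b$ at some finite stage $s$ (removing the heavier side, say $A^{1-b}_{k,s}$, from $T$), nothing stops the approximation $\M[\cdot]$ from subsequently pouring arbitrary additional mass onto the \emph{surviving} side $A^b_k$. In the limit, $\M(T_{n_k})$ may then be essentially as large as $\M(T_{n_{k-1}})$; the ``halving'' you observed at stage $s$ was only a fact about $\M[s]$, not about $\M$. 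No stopping rule can detect when $\M[s]$ is close to its limit, since that limit is not computable, so padding and precision parameters cannot absorb this error. A single controlled coordinate gives you one shot, and one shot is not enough against an adversarially scheduled enumeration of $\M$.

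The paper's proof fixes this by allocating a \emph{budget of many controlled coordinates per target level} and re-diagonalizing every time the mass creeps back up. Concretely, via the recursion theorem one controls $u$ on $2^{k+1}$ positions inside an interval $I_k$; the $k$-strategy waits until $\M(T_{\max I_k})[s] \geq 2^{-k}$, picks a fresh controlled $y \in I_k$, chooses the value of $u(y)$ so as to delete a side of $\M[s]$-weight at least $2^{-k-1}$, and then returns to waiting. Since each deleted set is permanently removed and has limit $\M$-mass at least $2^{-k-1}$, and all these deletions are disjoint subsets of $\{0,1\}^{\max I_k}$, at most $2^{k+1}$ deletions can ever occur---so the budget suffices and the strategy eventually waits forever, at which point $\M(T_{\max I_k}) < 2^{-k}$. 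This re-diagonalization-with-budget mechanism is the idea your proposal is missing; without it the argument does not close.
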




\begin{proof}
We build a partial computable function $\phi_e$, whose index we know in advance by the recursion theorem. This means that we control the value of $u(\langle e,x\rangle)$ for all~$x$. First, we partition $\N$ into consecutive intervals $I_1, I_2, ...$ such that we control $2^{k+1}$ values of~$u$ inside~$I_k$. For each~$k$ in parallel, we define $\phi_e$ on $I_k$ as follows. 
\begin{itemize}
\item[] Step 1: Wait for a stage~$s$ such that the set $E_k[s]=\{\sigma \colon \sigma \uh I_k ~ \text{extends}~ u[s] \uh I_k \}$ is such that $\M(E_k)[s] \geq 2^{-k}$. 

\smallskip

\item[] Step 2: Find a $y \in I_k$ that we control and on which $u[s]$ is not defined. Consider the two ``halves"  $E^0_k[s]=\{ \sigma \in E_k[s] \colon \sigma(y)=0\}$ and $E^1_k[s]=\{\sigma \in E_k[s] \colon \sigma(y)=1\}$ of~$E_k[s]$. Note that either $\M(E^0_k[s])\geq 2^{-k-1}$ or $\M(E^1_k)[s] \geq 2^{-k-1}$. If the first holds, set $u(y)[s+1]=1$, otherwise set $u(y)[s+1]=0$. Go back to Step 1. 
\end{itemize}

The co-c.e.\ tree~$T$ associated to the class~$\C$ is the set of strings~$\sigma$ such that~$\sigma$ is an extension of $u \uh |\sigma|$. The construction works because every time we pass by Step 2, we remove from~$T[s]$ a set $E^i_k[s]$ (for some $i\in\{0,1\}$ and $k,s\in\N$) such that $\M(E^i_k)[s]\geq2^{-k-1}$. Therefore, Step 2 can be executed at most $2^{k+1}$ times, and by the definition of $I_k$ we do not run out of values~$y \in I_k$ on which we control~$u$. Therefore, the algorithm eventually reaches Step 1 and waits there forever. Setting $f(k)=\max(I_k)$, this implies that the $\M$-weight of the set $\{\sigma \colon \sigma \uh f(k) ~ \text{extends}~ u\}$ is bounded by $2^{-k}$, or equivalently,
\[
\M(T_{f(k)}) \leq 2^{-k},
\]
which proves that~$\C$ is deep. 
\end{proof}

The proof provided here gives us a general template to prove the depth of a $\Pi^0_1$ class. First of all, the definition of the $\Pi^0_1$ class should allow us to control parts of it in some way, either because we are defining the class ourselves or because, as in the above proof, the definition of the class involves some universal object which we can assume to partially control due to the recursion theorem. All the other examples of deep $\Pi^0_1$ class we will see below belong to this second category. Let us take a step back and analyze more closely the structure of the proof of Theorem~\ref{thm:levin-stephan}.  Given a $\Pi^0_1$ class $\C$ with canonical co-c.e.\ tree $T$, the proof consists of the following steps.\\ 

\noindent (1) For a given~$k$, we identify a level~$N=f(k)$ at which we wish to ensure $\M(T_N) \leq 2^{-k}$. The choice of~$N$ will depend on the particular class $\C$. \\

\noindent (2) Next we implement a two-step strategy to ensure that $\M(T_N)\leq2^{-k}$. Such a strategy will be called a \emph{$k$-strategy}. 

\begin{itemize}
\item[] (2.1) First, we wait for a stage~$s$ at which $\M(T_N)[s] \geq 2^{-k}$. 
\smallskip
\item[](2.2)  If at some stage $s$ this occurs, we remove one or several of the nodes of $T[s]$ at level~$N$ such that the total $\M[s]$-weight of these nodes is at least $\delta(k)$ for a certain function $\delta$, and then go back to Step 2. 
\end{itemize}

\noindent (3) Each execution of Step 2 (removing nodes from~$T[s]$ for some $s$) comes at a cost $\gamma(k)$ for some $k$, and we need to make sure that we do not go over some maximal total cost $\Gamma(k)$ throughout the execution of the $k$-strategy. \\

In the above example, $\delta(k)=2^{-k-1}$ and the only cost for us is to define one value of $u(y)$ out of the $2^{k+1}$ we control, so we can, for example, set $\gamma(k)=1$ and $\Gamma(k)=2^{k+1}$. By definition of~$\M$, the $k$-strategy can only go through Step 2 at most $1/\delta(k)$ times, and thus the total cost of the $k$-strategy will be at most $\gamma(k)/\delta(k)$. All we have to do is to make sure that $\gamma(k)/\delta(k) \leq \Gamma(k)$ (which is the case in the above example). Again, we have some flexibility on the choice of $N=f(k)$, so it will suffice to choose an appropriate~$N$ to ensure $\gamma(k)/\delta(k) \leq \Gamma(k)$. In some cases, there will not be a predefined maximum for each~$k$, but rather a global maximum $\Gamma$ that the sum of the the costs of the $k$-strategies should not exceed, i.e., we will want to have $\sum_k \gamma(k)/\delta(k) \leq \Gamma$. 
Let us now proceed to more examples of $\Pi^0_1$ classes. \\

\subsection{Shift-complex sequences}

\begin{definition}
Let $\alpha \in (0,1)$ and $c$ be a non-negative integer. 
\begin{itemize}
\item[(i)] $\sigma\in\str$ (resp. $X\in\cs$) is said to be \emph{$(\alpha,c)$-shift complex} if $\K(\tau) \geq \alpha |\tau| - c$ for every substring~$\tau$ of $\sigma$ (resp. of~$X$). 
\item[(ii)] $X\in\cs$ is said to be \emph{$\alpha$-shift complex} if it is $(\alpha,c)$-shift complex for some~$c$. 
\end{itemize}
\end{definition}

The very existence of $\alpha$-shift complex sequences is by no means obvious. Such sequences were first constructed by Durand, Levin, and Shen~\cite{DurandLS2008} who showed that there exist $\alpha$-shift complex sequences for all~$\alpha\in(0,1)$. It is easy to see that for every computable pair $(\alpha,c)$, the class of $(\alpha,c)$-shift complex sequences is a $\Pi^0_1$ class. Rumyantsev~\cite{Rumyantsev2011} proved that the class of $(\alpha,c)$-complex sequences is always negligible, but in fact, his proof essentially shows that it is even deep. The cornerstone of Rumyantsev's theorem is the following lemma. It relies on an ingenious combinatorial argument that we do not reproduce here. We refer the reader to~\cite{Rumyantsev2011} for the full proof. 

\begin{lemma}[Rumyantsev {\cite[Lemma 6]{Rumyantsev2011}}]\label{lem:rumyantsev}
Let $\beta \in (0,1)$.  For every rational $\eta\in(0,1)$, there exist two integers $n$ and $N$, with $n<N$, such that the following holds. For every probability distribution~$P$ on~$\{0,1\}^N$, there exist finite sets of strings $A_n, A_{n+1}, ..., A_N$ such that
\begin{itemize}
\item[(i)] for all~$j \in [n,N]$, $A_j$ contains only strings of length~$j$ and has at most $2^{j\beta }$ elements; and
\item[(ii)] $P(\{\sigma\in\{0,1\}^N\colon \sigma\text{ has some substring in }\cup_{i=n}^N A_i\})\geq 1-\eta$.
\end{itemize}
Moreover, $n$ and $N$ can be effectively computed from $\eta$ and can be chosen to be arbitrarily large. Once~$n$ and $N$ are fixed, the sets $A_i$ can be computed uniformly in~$P$. 
\end{lemma}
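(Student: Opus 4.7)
My approach is a greedy covering scheme combined with a subword-complexity dichotomy in the spirit of Rumyantsev's original argument. Fix an auxiliary $\beta' \in (\beta, 1)$, choose $n \geq d$ large (depending on $\beta$, $\beta'$, and $\eta$), and let $N$ be substantially larger than $n$, where the exact growth rate will be dictated by the estimates below. Given $P$ on $\{0,1\}^N$, I would construct the $A_i$'s iteratively: maintain a set $S \subseteq \{0,1\}^N$ of currently uncovered sequences (initially all of $\{0,1\}^N$) and, for each $i = n, n+1, \ldots, N$ in turn, compute the weight $q_w = P(\{X \in S : w \text{ is a substring of } X\})$ for every $w \in \{0,1\}^i$, take $A_i$ to be the $2^{\beta i}$ strings of largest $q_w$, and remove from $S$ those sequences covered by some element of $A_i$. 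This construction is clearly computable from $P$ uniformly in $i$, and the parameters $n$ and $N$ depend computably on $\eta$ and $d$.

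The core of the proof is bounding the final value of $P(S)$ by $\eta$. I would split sequences according to their subword complexity: call $X$ \emph{structured at level $i$} if the number of distinct length-$i$ substrings of $X$ is less than $k_i := 2^{(1-\beta')i}$. For sequences not structured at any level in $[n, N]$, a single level $i$ already suffices to cover $X$ with high probability against even a \emph{uniformly random} choice of $A_i$ of size $2^{\beta i}$: the probability that no length-$i$ substring of $X$ lands in $A_i$ is at most $(1 - k_i/2^i)^{2^{\beta i}} = (1 - 2^{-\beta' i})^{2^{\beta i}}$, which decays rapidly in $i$. Since the greedy choice performs at least as well as the random choice on average (by a standard swap argument), aggregating these failure probabilities over $i \in [n, N]$ yields a geometric tail below $\eta$ once $n$ is large. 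For sequences structured at some level $i$, the full set of length-$i$ substrings of $X$ has size less than $k_i \ll 2^{\beta i}$; a counting argument then shows that the number of distinct length-$i$ substring-sets that can arise from a length-$N$ sequence is small, so they can all be absorbed into the $A_i$'s within the budget, and the greedy selection will pick them up automatically.

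The main obstacle is the combinatorial counting in the second half of the dichotomy: quantifying precisely how many length-$N$ sequences can exhibit a given low-subword-complexity profile at a fixed level, and then balancing the total budget across the many levels $i \in [n, N]$ so that the structured and spread-out contributions jointly stay below $\eta$. This is the heart of Rumyantsev's combinatorial argument in~\cite{Rumyantsev2011}; once the counting bound is in hand, the rest of the scheme (effective parameter choice, the greedy enumeration, and the greedy-versus-random comparison) is mechanical.
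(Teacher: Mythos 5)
The paper does not actually give a proof of this lemma: it explicitly states that the combinatorial argument is not reproduced and refers the reader to Rumyantsev's paper. So there is no internal proof to compare your sketch against; I can only assess the sketch on its own terms.

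Your high-level plan (greedy covering plus a dichotomy on subword complexity) is in the spirit of Rumyantsev's argument, but the one place where you commit to a concrete estimate contains a sign error that inverts the conclusion. You set $\beta'\in(\beta,1)$, declare $X$ structured at level $i$ when it has fewer than $k_i=2^{(1-\beta')i}$ distinct length-$i$ substrings, and then claim that for unstructured $X$ a random $A_i$ of size $2^{\beta i}$ misses $X$ with probability at most $(1-2^{-\beta' i})^{2^{\beta i}}$, ``which decays rapidly in $i$.'' It does not decay: $(1-2^{-\beta' i})^{2^{\beta i}}\approx\exp\bigl(-2^{(\beta-\beta')i}\bigr)$, and since $\beta<\beta'$ this tends to $1$ as $i\to\infty$, so a random $A_i$ almost surely misses $X$. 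For the bound to be useful you need the exponent $2^{\beta i}\cdot k_i/2^i$ to diverge, i.e.\ $k_i\gg 2^{(1-\beta)i}$, which forces $\beta'<\beta$. But with $\beta'<\beta$ the threshold $k_i=2^{(1-\beta')i}$ exceeds $N-i+1$ (the maximum possible number of distinct length-$i$ substrings of a length-$N$ word) for all but very small $i$, so the ``unstructured'' branch can only be invoked on a narrow range $i\lesssim\log_2 N/(1-\beta')$. Handling the remaining sequences — and in particular the rich middle regime where neither branch applies directly — is precisely the combinatorial core of Rumyantsev's lemma, and your sketch defers it entirely.

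Two smaller issues. First, selecting $A_i$ as the top $2^{\beta i}$ strings by individual weight $q_w$ does not dominate the expected coverage of a random $A_i$: overlapping covers make top-$k$-by-weight different from max coverage, and the ``standard swap argument'' you invoke does not apply as stated. You either need the probabilistic method (a best $A_i$ exists and can be found by brute-force search, which preserves computability) or a genuine submodular-greedy argument with the associated $(1-1/e)$ loss. Second, once the sign error is corrected you should double-check that the failure probabilities over the usable levels $i$ actually sum below $\eta$; the claim that a single level already gives you a ``geometric tail'' does not follow from the corrected estimate without additional care in choosing $n$ and $N$. In short, the skeleton is plausible but the one quantitative step you fill in is wrong, and the genuinely hard combinatorial part is left as a black box.
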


Rumyantsev does not explicitly state that the conclusion holds for \emph{all}  probability measures, but nothing in his proof makes use of a particular measure.  

\begin{theorem}\label{thm:shiftcomplex}
For any computable $\alpha\in(0,1)$ and integer $c \geq0$, the $\Pi^0_1$ class of $(\alpha,c)$-shift complex sequences is deep. 
\end{theorem}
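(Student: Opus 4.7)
The plan is to adapt the Levin--Stephan template (as used in Theorem~\ref{thm:levin-stephan}), with the ad hoc combinatorial step replaced by Rumyantsev's Lemma~\ref{lem:rumyantsev}. Fix a computable rational $\beta \in (0,\alpha)$ and, via the recursion theorem, construct a prefix-free machine $M$ of known index. Any description through $M$ upper-bounds $\K$ up to an additive constant, so making $M$ output a short description of $\tau$ will force $\K(\tau) < \alpha|\tau|-c$ and thereby eject every sequence having $\tau$ as a substring from $\C$. Let $T$ denote the canonical co-c.e.\ tree of $\C$. The objective is to choose a computable function $N$ so that $\M(T_{N(k)}) < 2^{-k}$ for all $k$; this will be achieved by running a $k$-strategy in parallel on a disjoint region of $M$'s domain.

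The $k$-strategy proceeds as follows. Wait for a stage $s$ with $\M(T_{N(k)})[s] > 2^{-k}$. At such a stage, let $P$ be $\M[s]$ restricted to $T_{N(k)}[s]$ and renormalized to a probability measure on $\{0,1\}^{N(k)}$. Invoke Lemma~\ref{lem:rumyantsev} with $\beta$, $\eta = 1/2$, and a suitably large $d_k$, obtaining $n_k < N(k)$ and sets $A_{n_k},\dots,A_{N(k)}$ with $|A_i|\le 2^{\beta i}$ such that the $P$-mass of sequences having a substring in $\bigcup_i A_i$ is at least $1/2$. For each $i$ and each $\tau \in A_i$, assign $\tau$ a fresh $M$-description of length $\lceil \beta i\rceil + e_k$, where $e_k$ is chosen so that $\beta i + e_k + O(1) < \alpha i - c$ for all $i \ge n_k$. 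This drives $\K(\tau) < \alpha|\tau| - c$ for every $\tau\in\bigcup_i A_i$, so those $\tau$ cannot be substrings of any member of $\C$. Consequently the strings in $T_{N(k)}[s]$ that contain some such $\tau$---whose $\M[s]$-mass is at least $2^{-k-1}$---are eventually removed from $T_{N(k)}$. Loop back.

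Since $\sum_{|\sigma|=N(k)} \M(\sigma) \le 1$, the $k$-strategy triggers at most $2^{k+1}$ times, and its Kraft cost per trigger is at most $\sum_{i=n_k}^{N(k)} 2^{\beta i}\cdot 2^{-\beta i - e_k} = (N(k)-n_k+1)\,2^{-e_k}$, so the total cost of the $k$-strategy is at most $2^{k+1}(N(k)-n_k+1)\,2^{-e_k}$. The main obstacle is the simultaneous satisfaction of three parameter constraints: (i)~$(\alpha-\beta)n_k > e_k + c + O(1)$, so that the short $M$-descriptions indeed violate shift-complexity; (ii)~$\sum_k 2^{k+1}(N(k)-n_k+1)\,2^{-e_k}\le 1$, so that $M$ remains a valid prefix-free machine; and (iii)~Lemma~\ref{lem:rumyantsev} is applicable with these $n_k,N(k)$. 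Because the lemma allows $n_k$ (and hence $N(k)$) to be taken arbitrarily large as a function of $d_k$, there is enough slack: one first picks $e_k := 3k + \lceil \log_2(N(k)-n_k+1)\rceil + C$ for a suitable constant $C$ to control the Kraft budget, then picks $n_k$ large enough to absorb the additive gap in (i), then obtains $N(k)$ from Lemma~\ref{lem:rumyantsev}. Letting $h$ be the inverse of the computable map $k\mapsto N(k)$ then yields $\M(T_n) < 2^{-h(n)}$, witnessing depth.
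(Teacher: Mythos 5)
Your overall plan is the same as the paper's: use the recursion theorem to control a prefix-free machine (equivalently, a lower semi-computable discrete semi-measure) with known coding constant, invoke Rumyantsev's Lemma~\ref{lem:rumyantsev} to locate a small collection $\bigcup_i A_i$ of candidate substrings that covers at least half the current $\M[s]$-mass of $T_N[s]$, and then force every $\tau\in\bigcup_i A_i$ to have low complexity so that the corresponding nodes leave the tree. Up to translating between machines and semi-measures, this is exactly the paper's construction, so the conceptual approach is right.

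However, there is a genuine gap in your cost analysis, and it comes from the choice of description length $\lceil \beta i\rceil + e_k$. With that choice, the Kraft cost contributed at a single level $i$ is roughly $|A_i|\cdot 2^{-\beta i - e_k} \le 2^{-e_k}$, which is \emph{constant} in $i$; summing over $i = n_k,\dots,N(k)$ gives a per-trigger cost of $(N(k)-n_k+1)\cdot 2^{-e_k}$, which depends on $N(k)-n_k$. To keep the total budget under control you then need $e_k$ to grow with $\log(N(k)-n_k)$, but you also need $n_k$ large enough that $(\alpha-\beta)n_k > e_k + c + O(1)$, and finally $n_k,N(k)$ are both outputs of the lemma (you do not pick them independently --- you pick $\eta$ and $d$ and the lemma hands you \emph{both} $n$ and $N$). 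This is a genuine circularity: $e_k$ depends on $N(k)-n_k$, the constraint on $n_k$ depends on $e_k$, and $n_k, N(k)$ are jointly determined by the lemma. Closing it would require knowing that $n(d)$ eventually dominates $\log\bigl(N(d)-n(d)\bigr)$ as $d\to\infty$, which the lemma statement does not supply and which you do not argue. So as written the parameter constraints are not shown to be simultaneously satisfiable.

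The fix is to choose descriptions only as short as strictly necessary to violate shift-complexity: assign each $\tau\in A_i$ weight $2^{-\alpha i + c + e + 1}$ (i.e., description length $\approx \alpha i - c$, independent of $k$), as the paper does. Then the per-level cost is $|A_i|\cdot 2^{-\alpha i + c + e + 1} \le 2^{(\beta-\alpha)i + c + e + 1}$, a decaying geometric series whose sum is bounded by $\frac{2^{(\beta-\alpha)n_k + c + e + 1}}{1-2^{\beta-\alpha}}$ --- a quantity that depends only on $n_k$ and not on $N(k)$. One can then simply choose $d_k$ (hence $n_k$) large enough that this is below $2^{-2k-2}$, with no circular dependence on $N(k)$, and the rest of your argument goes through unchanged.
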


\begin{proof}
Let $\C$ be the $\Pi^0_1$ class of $(\alpha,c)$-shift complex sequences and let~$T$ be its canonical co-c.e.\ tree. We shall build a left-c.e.\ discrete semi-measure~$m$ whose coding constant, which we will write in the form~$2^e$, we know in advance. This means that whenever we will set a string~$\sigma$ to be such that $m(\sigma) > 2^{-\alpha |\sigma| + c + e +1 }$, then automatically we will have $\m(\sigma) > 2^{-\alpha |\sigma| + c +1}$, and thus $\K(\sigma) < \alpha |\sigma| - c$. This will \textit{de facto} remove $\sigma$ from~$T$.

Let us now turn to the construction. First, we pick some $\beta$ such that $0 < \beta < \alpha$. For each~$k$, we apply the above Lemma \ref{lem:rumyantsev} to $\beta$ and $\eta = 1/2$ to obtain a pair $(n,N)$ with the above properties as described in the statement of the lemma (we will also make use of the fact that $n$ can be chosen arbitrarily large, see below). Then the two step strategy is the following:
\begin{itemize}
\item[] Step 1: Wait for a stage~$s$ at which $\M(T_N)[s] \geq 2^{-k}$. Up to delaying the increase of $\M$, we can assume that when such a stage occurs, one in fact has  $\M(T_N)[s] = 2^{-k}$.\footnote{That is, if $s$ satisfies $\M(T_N)[s-1] < 2^{-k}$ and $\M(T_N)[s]=r> 2^{-k}$, we can modify the enumeration of $\M$ so that $\M(T_N)[s]=2^{-k}$ by delaying the enumeration of the additional $\M$-measure of size $r-2^{-k}$ until stage $s+1$.}

\smallskip

\item[] Step 2: Let $P$ be the (computable) probability distribution on $\{0,1\}^N$ whose support is contained in $T_N[s]$ and such that for all~$\sigma \in T_N[s]$, $P(\sigma)=2^k \cdot \M(\sigma)[s]$. By Lemma~\ref{lem:rumyantsev}, we can compute a collection of finite sets $A_i$ such that:

\smallskip
\begin{itemize}
\item[(i)] For all~$j \in [n,N]$, $A_j$ contains only strings of length~$j$ and has at most $2^{j\beta}$ elements. 
\smallskip
\item[(ii)]  Setting $F=\{\sigma\in\{0,1\}^N\colon \sigma\text{ has some substring in }\cup_{i=n}^N A_i\}$, we have $P(F)\geq 1-\eta=1/2$.  Thus, $\M(T_N \cap F)[s] \geq 2^{-k-1}$. 
\end{itemize}
\smallskip
Then, for each~$i$ and each $\sigma \in A_i$, we ensure, increasing~$m$ if necessary, that  $m(\sigma) > 2^{-\alpha |\sigma| + c + e +1 }$. As explained above, this ensures that all strings in~$F$ are removed from~$T_N$ at that stage, and therefore we have removed a $\M$-weight of at least $\delta(k)=2^{-k-1}$ from~$T_N$. Then we go back to Step 1. 
\end{itemize}

To finish the proof, we need to make sure that this algorithm does not cost us too much. The constraint here is that we need to make sure that $\sum_{\tau\in\str} m(\tau) \leq 1$, so we are trying to stay under a global cost of $\Gamma=1$. For a given~$k$, our cost at each execution of Step 2 is the increase of~$m$ on the strings in~$F$. The total $m$-weight we add during one such execution of Step 2 is at most
\[
\gamma(k) = \sum_{j=n}^N |A_j| \cdot 2^{-j\alpha  + c + e +1 } \leq \sum_{j=n}^N 2^{j(\beta-\alpha) + c+ e + 1} \leq \frac{2^{(\beta-\alpha) n + c+ e + 1}}{1-2^{\beta-\alpha}}
\]
But since $n=n(k)$ can be chosen arbitrarily large, therefore, with an appropriate choice of $n$, we can make $\gamma(k) \leq 2^{-2k-2}$. Therefore, since the $k$-strategy executes Step 2 at most $1/\delta(k)$ times, we have
\[
\sum_{\tau\in\str} m(\tau)\leq\sum_{k\in\N} \gamma(k)/\delta(k) \leq \sum_{k\in\N} 2^{-2k-2}/2^{-k-1} \leq 1
\]
and therefore~$m$ is indeed a discrete semi-measure.

\end{proof}

\subsection{DNC$_q$ functions}

Let $(\phi_e)_{e\in\N}$ be a standard enumeration of partial computable functions from $\N$ to $\N$. Let $J$ be an effectively optimal partial computable function. By this we mean that there exists a total computable function~$h$, which we may assume to be one-to-one, such that $J(h(e,x))=\phi_e(x)$ and  $|h(e,x)| \leq |x|+c_e$ for some constant~$c_e$ that depends on~$e$ only (where the length of an integer is the length of its binary representation). For example, one could take:
\[
J(2^e(2x+1))=\phi_e(x)
\] 
The following notion was studied in \cite{GreenbergM2011}.

\begin{definition}
Let $q: \N \rightarrow \N$ be a computable order function. The set $\DNC_q$ is the set of total functions $f: \N \rightarrow \N$ such that for all~$n$:
\begin{itemize}
\item[(i)] $f(n) \not= J(n)$ (where this condition is trivially satisfied if $J(n)$ is undefined), and
\item[(ii)] $f(n) < q(n)$.
\end{itemize}
\end{definition}

We should note that this is a slight variation of the standard definition of diagonal non-computability (used in~\cite{GreenbergM2011}), which is usually formulated in terms of the condition 
\begin{itemize}
\item[(i$'$)] $f(n)\neq\phi_n(n)$ 
\end{itemize}
instead of the condition (i) given above.  While this makes no difference for diagonal non-computability alone, using condition (i') would make the class $\DNC_q$  highly dependent on the particular choice of enumeration $(\phi_e)_{e\in\N}$, while with our definition, this dependence is somewhat reduced. More precisely, we have the following robustness property. Let $J_0$ and $J_1$ be two optimal partial computable functions, and for an arbitrary computable order function $q$, let $\DNC_{q}^0$ and $\DNC_{q}^1$ be the corresponding classes of functions given by the above definition. Then there are constants~$c,d$ such that, setting $\tilde{q}(n)=q(cn+d)$, we have 
\[
\DNC_{\tilde{q}}^0 \leq_s \DNC_{q}^1.
\]
Indeed there is a constant~$e$ such that $J_1(h(e,x))=J_0(x)$, and $h$ is bounded by a linear function.


Note that for every fixed computable order function~$q$, the class $\DNC_q$ is a $\Pi^0_1$ subset of $\N^\N$, but due to the bound~$q$, $\DNC_q$ can be viewed as a $\Pi^0_1$ subset of $\cs$. Whether the class $\DNC_q$ is deep turns out to depend on~$q$. Indeed, we have the following interesting dichotomy theorem. 

\begin{theorem}\label{thm:dnc-q}
Let $q$ be a computable order function. 
\begin{itemize}
\item[(i)] If $\sum_{n\in\N} 1/q(n) < \infty$, then $\DNC_q$ is not $\tt$-negligible. 
\item[(ii)] If  $\sum_{n\in\N} 1/q(n) = \infty$, then $\DNC_q$ is deep.
\end{itemize}
\end{theorem}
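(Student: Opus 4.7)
For part (i), the plan is to exhibit a computable measure $\mu$ under which $\DNC_q$ has positive measure. Fix an encoding of functions $f:\N\to\N$ with $f(n)<q(n)$ as binary strings (using $\lceil\log_2 q(n)\rceil$ bits per position), and let $\mu$ be the product measure on $\cs$ whose projection at coordinate $n$ is uniform on $\{0,\ldots,q(n)-1\}$. Since $q$ is computable, $\mu$ is computable. We may assume $q(n)\geq 2$ for all $n$, as otherwise $\sum_n 1/q(n)$ cannot converge. For each $n$, $\mu(\{f:f(n)=J(n)\})\leq 1/q(n)$, so by independence of coordinates,
\[
\mu(\DNC_q) \;=\; \prod_n \bigl(1 - \mu(\{f:f(n)=J(n)\})\bigr) \;\geq\; \prod_n \bigl(1 - 1/q(n)\bigr) \;>\; 0,
\]
where positivity follows from $\sum_n 1/q(n) < \infty$. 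Hence $\DNC_q$ is not $\tt$-negligible.

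For part (ii), the plan is to follow the template laid out in the proof of Theorem~\ref{thm:levin-stephan}. Using the recursion theorem, applied via the $s$-$m$-$n$ theorem so as to control a uniformly computable family of partial computable functions, we obtain control of $J(n)$ on a computable set $C\subseteq\N$; combining countably many recursion applications lets us choose $C$ so that $\sum_{n\in C}1/q(n)=\infty$. Using divergence, we effectively select, for each $k$, a finite subset $B_k\subseteq C$ with $\sum_{n\in B_k}1/q(n)>2^{k+1}$, the $B_k$'s pairwise disjoint, and set $f(k):=\max B_k+1$, monotonized to a computable order. Let $T$ denote the canonical co-c.e.\ tree of $\DNC_q$. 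The $k$-strategy $S_k$ acts as follows: whenever $\M(T_{f(k)})[s]\geq 2^{-k-1}$, it picks the next unused $n\in B_k$, finds $v^*\in\{0,\ldots,q(n)-1\}$ maximizing $\M(W_n^{v^*})[s]$, where $W_n^v:=\{\sigma\in T_{f(k)}[s]:\sigma\text{ encodes }g\text{ with }g(n)=v\}$, and forces $J(n):=v^*$ through the appropriate controlled $\phi_e$. Greediness and the partition of $T_{f(k)}[s]$ by the value at coordinate $n$ give $\M(W_n^{v^*})[s]\geq \M(T_{f(k)})[s]/q(n)\geq 2^{-k-1}/q(n)$, and this mass is permanently removed from $T$.

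For the analysis, let $R_1,R_2,\ldots$ denote the successive sets removed by $S_k$ at stages $s_1<s_2<\cdots$. By construction $R_i\subseteq T[s_i]$ and $R_i\cap T[s_i+1]=\emptyset$, so for $j>i$, $R_j\subseteq T[s_j]\subseteq T[s_i+1]$ is disjoint from $R_i$. Therefore $\sum_i \M(R_i)\leq\sum_{|\sigma|=f(k)}\M(\sigma)\leq 1$, and together with $\M(R_i)\geq\M(R_i)[s_i]\geq 2^{-k-1}/q(n_i)$ this yields $\sum_i 1/q(n_i)\leq 2^{k+1}$. Since $\sum_{n\in B_k}1/q(n)>2^{k+1}$, $S_k$ cannot use up all of $B_k$ and must terminate, after which $\M(T_{f(k)})[s]<2^{-k-1}$ for all large $s$. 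Passing to the limit, $\M(T_{f(k)})\leq 2^{-k-1}<2^{-k}$, witnessing depth. The main obstacle is the recursion-theoretic set-up at the outset of part (ii): a single application of the recursion theorem controls only $\phi_e$ for one $e$, giving access only to coordinates in $A_e=\{2^e(2x+1):x\in\N\}$, whose $q$-sum may be finite for an adversarial $q$; one must combine countably many applications through $s$-$m$-$n$ to obtain a controlled set $C$ with $\sum_{n\in C}1/q(n)=\infty$.
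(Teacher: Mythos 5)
Your part~(i) is essentially the paper's argument: encode functions $f$ with $f(n)<q(n)$ as infinite binary sequences, take the product measure whose $n$-th coordinate is uniform on $\{0,\dots,q(n)-1\}$, and use independence to get $\mu(\DNC_q)\geq\prod_n(1-1/q(n))>0$. The paper first replaces $q$ by the largest power of two below it so that this measure is literally Lebesgue, but the two versions are the same idea.

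Your $k$-strategy in part~(ii) is a reasonable variant of the paper's: you force a single controlled coordinate $n$ per step, remove mass $\geq 2^{-k-1}/q(n)$, and charge cost $1/q(n)$ against a divergent budget; the paper instead pre-groups the controlled coordinates into finite blocks $D_j$ with $\prod_{n\in D_j}(1-1/q(n))<1/2$ and clears a whole block per step, removing a fixed $2^{-k-1}$ of mass each time. Both accountings close.

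The genuine gap is that the ``main obstacle'' you identify is not an obstacle at all, and the claim that $\sum_{n\in A_e}1/q(n)$ ``may be finite for an adversarial $q$'' is false. You have forgotten that $q$ is a computable \emph{order}, hence non-decreasing. Cauchy's condensation test then gives that $\sum_n 1/q(n)=\infty$ implies $\sum_k 2^k/q(2^k)=\infty$. Since $A_e=\{2^e(2x+1):x\in\N\}$ has positive lower density (every interval of length at least $2^{e+1}$ contains at least a $2^{-e-2}$-fraction of points of $A_e$), on each dyadic block $[2^{k-1},2^k)$ with $k>e+1$ we have $|A_e\cap[2^{k-1},2^k)|\geq 2^{-e-2}\cdot 2^{k-1}$ and $q(n)\leq q(2^k)$ for $n$ in that block, so
\[
\sum_{n\in A_e}\frac{1}{q(n)}\ \geq\ \sum_{k>e+1}\frac{2^{-e-2}\cdot 2^{k-1}}{q(2^k)}\ =\ \infty.
\]
Thus a \emph{single} application of the recursion theorem already yields a controlled set on which the $q$-series diverges, and this density/condensation observation is exactly the ingredient missing from your proposal. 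The machinery of ``countably many recursion applications through $s$-$m$-$n$'' is unnecessary, and as you state it its correctness is unclear (the recursion theorem hands you an index; it does not let you ``choose $C$''). Notice also that your workaround only gets divergence because $C\supseteq A_{e_1}$ and $\sum_{n\in A_{e_1}}1/q(n)=\infty$ already — the additional indices contribute nothing, which is another sign that the premise motivating them is wrong.
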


\begin{proof}
Part~(i) is a straightforward adaptation of Ku\v cera's proof that every Martin-L\"of random sequence computes a DNC function. Pick an~$X \in \cs$ at random and use it as a source  to randomly pick the value of $f(i)$ uniformly among $\{0,...,q(i)-1\}$, independently of the other values of $f$.


The details are as follows. First, we can assume that $q(n)$ is a power of~$2$ for all~$n$. Indeed, for $q' \leq q$ the class $\DNC_{q'}$ is contained in $\DNC_{q}$, so if we take $q'(n)$ to be the largest power of~$2$ less than or equal to $q(n)$, we have that $q'$ is computable, $q' \leq q$,  and $\sum_{n\in\N} 1/q'(n) < \infty$ because $q'$ is equal to~$q$ up to factor~$2$. Now, set $q(n)=2^{r(n)}$. Split $\N$ into intervals where interval $I_n$ has length~$r(n)$. One can now interpret any infinite binary sequence~$X$ as a function $f_X: \N \rightarrow \N$, where $X(n)$ is the index of $X \uh I_n$ in the lexicographic ordering of strings of length~$r(n)$. For~$X$ taken at random with respect to the uniform measure, the event $f_X(n) = J(n)$ has probability at most $1/q(n)$ and is independent of all such events for $n' \not= n$. Thus the total probability over~$X$ such that $f_X(n)\neq J(n)$ for all~$n$ is at least 
\[
\prod_{n\in\N} (1-1/q(n)),
\]
an expression that is positive if and only if $\sum_{n\in\N} 1/q(n)<  \infty$, which is satisfied by hypothesis (the proof of Lemma 5.6.4 in \cite{Nies2009} includes a proof of this result). Thus, the class $\DNC_q$, encoded as above, has positive uniform measure, and thus is not $\tt$-negligible.

For~(ii), let~$q$ be a computable order function such that $\sum_{n\in\N} 1/q(n) = \infty$. Let $T$ be the canonical co-c.e.\ binary tree in which the elements of $\DNC_q$ are encoded. By the recursion theorem, we will build a partial recursive function $\phi_e$ whose index~$e$ we know in advance and therefore will be able to define $J$ on the set of values $D = \{h(e,x) : x \in \N\}$, where $h$ is defined together with~$J$ at the beginning of this section. By definition of~$h$ and the assumption of injectivity, the set~$D$ has positive (lower) density in~$\N$. Thus there is a constant~$d$ such that for every interval~$I$ of type $\{2^{(k-1)d},...,2^{kd}-1\}$ with~$k$ large enough (say, greater than~$d$), $|D \cap I| \geq 2^{-d} |I|$. This in particular implies that
\[
\sum_{n \in D} 1/q(n) = \infty.
\]
To show this, we appeal to Cauchy's condensation test, according to which for any positive non-increasing sequence $(a_n)_{n\in\N}$, $\sum_{n\in\N} a_n < \infty$ if and only if $\sum_{k\in\N} 2^k a_{2^k} < \infty$. A trivial adaptation of the proof of this criterion gives that $\sum_{n\in\N} a_n < \infty$ if and only if $\sum_{k\in\N} m^k a_{m^k} < \infty$ for any integer~$m \geq 2$. Taking $m=2^d$, since the sum $\sum_{n\in\N}1/q(n)$ diverges, we have that $\sum_{k\in\N} 2^{kd}/q(2^{kd})$ diverges, and thus
\[
 \sum_{n \in D} \frac{1}{q(n)} \geq \sum_{k>d} \frac{|D \cap \{2^{(k-1)d},...,2^{kd}-1\}|}{q(2^{kd})} \geq \sum_{k>d} \frac{2^{-d} \cdot 2^{kd-1}}{q(2^{kd})} = \infty.
\]

Now that we have established that $\sum_{n \in D} 1/q(n) = \infty$, we remark that this is equivalent to having $\prod_{n \in D} (1-1/q(n)) = 0$. Thus, we can effectively partition $D$ into countably many finite sets $D_j$ such that $\prod_{n \in D_j} (1-1/q(n)) < 1/2$. We are ready to describe the construction. For each~$k$, reserve some finite collection $D_{j_1}, D_{j_2}, ..., D_{j_{k+1}}$ of sets $D_j$. Let $N$ be a level of the binary tree~$T$ sufficiently large such that the encoding of each path~$f$ up to length~$N$ is enough to recover the values of $f$ on $D_{j_1} \cup D_{j_2} \cup  ...\cup D_{j_{k+1}}$, which can be found effectively in~$k$. The $k$-strategy then works as follows.

\begin{itemize}
\item[] Initialization. Set $i=1$.  

\smallskip

\item[] Step 1: Wait for a stage~$s$ at which $\M(T_N)[s] \geq 2^{-k}$. Up to delaying the increase of $\M$, we can assume that when such a stage occurs, one in fact has  $\M(T_N)[s] = 2^{-k}$.

\smallskip

\item[] Step 2: Since each~$\sigma \in T_N[s]$ can be viewed as a function from some initial segment of $\N$ that contains $D_{j_1} \cup D_{j_2} \cup  ...\cup D_{j_{k+1}}$, take the first value $x \in D_{j_i}$. For all $\sigma \in T_N$, $\sigma(x) < q(x)$, thus by the pigeonhole principle there must be at least one value~$v < q(x)$ such that
\[
\M \big( \{ \sigma \in T_N : \sigma(x)=v\})[s] \geq 2^{-k}/q(x).
\]
Set $J(x)$ to be the least such value~$v$. This thus gives 
\[
\M \big( \{ \sigma \in T_N : \sigma(x) \not= v\})[s] \leq 2^{-k}(1-1/q(x)).
\]
Now take another $x'$ in $D_{j_i}$ on which~$J$ has not been defined yet. By the same reasoning, there must be a value $v' < q(x')$ such that 
\[
\M \big( \{ \sigma \in T_N : \sigma(x) \not= v \wedge \sigma(x') \not= v' \})[s] \leq 2^{-k}(1-1/q(x))(1-1/q(x')).
\]
We then set $J(x')$ to be equal to $v'$. Continuing in this fashion, we can assign all the values of $J$ on $D_{j_i}$ in such a way that
\[
\M \big( \{ \sigma \in T_N : \forall x \in D_{j_i}, \ \sigma(x) \not=J(x) \}\big)[s] \leq 2^{-k} \prod_{n \in D_j} (1-1/q(n)) \leq 2^{-k}/2.
\]
Then we increment~$i$ by~$1$ and go back to Step 1. 
\end{itemize}
Once again, at each execution of Step 2 we remove an $\M$-weight of at least $\delta(k)=2^{-k-1}$ from~$T_N$, since setting $\sigma(x)=J(x)$ for some~$x$ immediately ensures that no extension of~$\sigma$ is in $\DNC_q$. Moreover, each execution of Step 2 requires us to define $J$ on all values in some $D_{j_i}$ for $1\leq j\leq k+1$.  That is, each such execution costs us one $D_{j_i}$, so as in the case of consistent completions of $\PA$, we can set $\gamma(k)=1$ and $\Gamma(k)=2^{k+1}$, which ensures that $\gamma(k)/\delta(k)\leq\Gamma(k)$. Therefore, $\DNC_q$ is a deep $\Pi^0_1$ class.


\end{proof}


%

\subsection{Finite sets of maximally complex strings}

For a constant~$c>0$, generating a long string~$\sigma$ of high Kolmogorov complexity, for example $\K(\sigma) > |\sigma|-c$, can be easily achieved with high probability if one has access to a random source (just repeatedly flip a fair coin and output the raw result). One can even use this technique to generate a sequence of strings $\sigma_1, \sigma_2, \sigma_3, \ldots , $ such that $|\sigma_n| = n$ and $\K(\sigma_n) \geq n-c$. Indeed, the measure of the sequences~$X$ such that $\K(X \uh n) \geq n -d$ for all~$n$ is at least $1-2^{-d}$, and thus the $\Pi^0_1$ class of such sequences of strings $(\sigma_1,\sigma_2,...,)$ of high complexity (encoded as elements of $\cs$) is not even $\tt$-negligible. \\

The situation changes dramatically if one wishes to obtain many distinct high complexity strings of a given length. Let $\ell,f,d:\N\rightarrow\N$ be any computable functions. Consider the $\Pi^0_1$ class $\KK_{f,\ell,d}$ whose members are sequences $\vec F=(F_1, F_2, F_3, ...)$ where for all~$i$, $F_i$ is a finite set of~$f(i)$ strings~$\sigma$ of length~$\ell(i)$ such that $\K(\sigma) \geq \ell(i)-d(i)$. Note once again that $\KK_{f,\ell,d}$ can be viewed, modulo encoding, as a $\Pi^0_1$ subclass of $\cs$. 

\begin{theorem}\label{thm:many-complex-strings}
For all computable functions~$f,\ell,d$ such that $f(i)/2^{d(i)}$ takes arbitrarily large values and $\ell$ is increasing, the class $\KK_{f,\ell,d}$ is deep. 
\end{theorem}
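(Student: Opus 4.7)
My plan is to follow the general template established in the proof of Theorem~\ref{thm:levin-stephan}. By Remark~\ref{rem:deep-with-m} it suffices to work with the universal discrete semi-measure $\m$ in place of $\M$. Using the recursion theorem, I will build a lower semi-computable discrete semi-measure $m$ whose coding constant $C$ (relating $m$ to $\m$, hence to $\K$) is known in advance; thus whenever I set $m(\sigma) \geq 2^{-\ell + d + C}$ for a string $\sigma$ of length $\ell$, I automatically force $\K(\sigma) < \ell - d$ and consequently remove from the class every $\vec F \in \KK_{f,\ell,d}$ whose $F_i$ contains $\sigma$ at the appropriate index $i$.

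The central idea is to act on a single well-chosen index $i = i(k)$ for each $k$. Using the hypothesis that $f(i)/2^{d(i)}$ takes arbitrarily large values, I can computably find $i(k)$ satisfying $f(i(k))/2^{d(i(k))} \geq 2^{2k+C+3}$. Let $T$ be the canonical co-c.e.\ tree of $\KK_{f,\ell,d}$ under a fixed computable encoding, and let $N(k)$ be a level of $T$ at which the coordinate $F_{i(k)}$ is already fully determined by every node at that level. The $k$-strategy then runs as follows: wait for a stage $s$ at which $\m(T_{N(k)})[s] \geq 2^{-k}$; at such a stage, find a set $V = \{v_1, \ldots, v_t\}$ of $t := \lceil 2^{\ell(i(k))+1}/f(i(k)) \rceil$ distinct strings of length $\ell(i(k))$ with the property that declaring all of them to be of low complexity (by setting $m(v_j) := 2^{-\ell(i(k)) + d(i(k)) + C}$) erases at least half of $\m(T_{N(k)})[s]$; then perform the assignment and loop back to the waiting step.

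The existence of a good set $V$ follows from an averaging argument: for a uniformly random choice of $t$ distinct strings of length $\ell(i(k))$, each $\sigma \in T_{N(k)}[s]$ fails to be covered (i.e.\ has $F_{i(k)}(\sigma) \cap V = \emptyset$) with probability at most $(1 - f(i(k))/2^{\ell(i(k))})^t \leq e^{-2} < 1/2$, so the expected $\m$-weight of covered $\sigma$ exceeds $\m(T_{N(k)})[s]/2 \geq 2^{-k-1}$; a suitable $V$ is then located by a finite effective search. For the cost accounting: each execution of step 2 spends at most $t \cdot 2^{-\ell(i(k)) + d(i(k)) + C} \leq 2 \cdot 2^{d(i(k)) + C}/f(i(k))$ in $m$-weight and permanently removes $\m$-weight $\geq 2^{-k-1}$ from $T_{N(k)}$; since the total $\m$-weight ever present at level $N(k)$ is bounded by $1$, the $k$-strategy executes at most $2^{k+1}$ times, giving a total $m$-cost of at most $2^{k+2} \cdot 2^{d(i(k)) + C}/f(i(k)) \leq 2^{-k-1}$ by the choice of $i(k)$. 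Summing over $k$ yields $\sum_\sigma m(\sigma) \leq 1$, so $m$ is a valid semi-measure, and the construction guarantees $\m(T_{N(k)}) < 2^{-k}$ for every $k$, witnessing the depth of $\KK_{f,\ell,d}$.

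The main obstacle, as is typical with this template, is the simultaneous satisfaction of the semi-measure constraint and the weight-erasure requirement; this is resolved precisely by the unboundedness of $f/2^{d}$, which lets us drive the per-execution cost below any prescribed exponentially small threshold. Without this assumption the argument breaks down, consistent with the fact that, roughly, if $f/2^{d}$ were bounded then enough complex strings could be generated in parallel by random sampling with fixed positive probability.
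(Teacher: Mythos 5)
Your proof is correct and follows the paper's general template (build a lower semi-computable discrete semi-measure via the recursion theorem, act on a single coordinate $i(k)$ per parameter $k$, wait for weight to accumulate and then kill it), but your Step~2 uses a genuinely different combinatorial device. The paper uses a bare pigeonhole argument: since every node of $T_N[s]$ contributes weight to $f$ of the $2^\ell$ strings of length $\ell$, some single string $\sigma$ satisfies $\M(\{\rho \in T_N : \sigma \in F_i(\rho)\})[s] \geq 2^{-k} f 2^{-\ell}$; bumping $m(\sigma)$ removes that small slice at cost $2^{-\ell+d+e+1}$, and one iterates up to $2^{k+\ell}/f$ times. You instead run a probabilistic hitting-set argument, drawing $t = \lceil 2^{\ell+1}/f\rceil$ random strings so that each node survives with probability $\leq e^{-2}<1/2$, then derandomize by finite search; this removes half the weight in one shot at a per-step cost of order $t \cdot 2^{-\ell+d+C}$, with only $2^{k+1}$ iterations. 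The two choices of $(\gamma,\delta)$ give the same ratio $\gamma(k)/\delta(k) \asymp 2^{k+d}/f$, hence the same threshold $f/2^d \gtrsim 2^{2k}$, so neither buys a quantitatively better theorem; the paper's pigeonhole step is more elementary, while yours is closer in spirit to a covering/$\varepsilon$-net argument. Two small points worth tightening: the bound $t\cdot 2^{-\ell+d+C}\leq 2\cdot 2^{d+C}/f$ is off by a constant factor coming from the ceiling (you need $t\leq 2^{\ell+2}/f$, say, using $f\leq 2^\ell$), which is harmlessly absorbed into the choice of $i(k)$; and you should select the $i(k)$ to be distinct (which the unboundedness of $f/2^d$ permits), so that by the monotonicity of $\ell$ the strings touched by different $k$-strategies have pairwise distinct lengths and the per-$k$ cost bounds add up cleanly.
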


\begin{proof}
Let~$T$ be the canonical co-c.e.\ tree associated to the class $\KK_{f,\ell,d}$. Just like in the proof of Theorem~\ref{thm:shiftcomplex}, we will build a discrete semi-measure~$m$ whose coding constant~$2^e$ we know in advance and thus, by setting $m(\sigma) > 2^{-|\sigma| + d(|\sigma|) + e +1 }$, we will force $\m(\sigma) > 2^{-|\sigma| + d(|\sigma|) +1}$, and thus $\K(\sigma) < |\sigma| - d(|\sigma|)$, which will consequently remove from $T$ every sequence of sets $(F_i)_{i\in\N}$ such that~$\sigma$ belongs to some $F_i$. 

Fix a~$k$, and let us pick some well-chosen $i=i(k)$, to be determined later. For readability, let $f=f(i(k))$, $\ell=\ell(i(k))$ and $d=d(i(k))$. Let~$N$ be the level of~$T$ at which the first~$i$ sets $F_1\dotsc F_i$ of the sequence $\vec{F}$ of~$\KK_{f,\ell,d}$ are encoded. The $k$-strategy does the following.

\begin{itemize}
\item[] Step 1: Wait for a stage~$s$ at which $\M(T_N)[s] \geq 2^{-k}$. Up to delaying the increase of $\M$, we can assume that when such a stage occurs, one in fact has  $\M(T_N)[s] = 2^{-k}$.

\smallskip

\item[] Step 2: The members of $T_N[s]$ consist of finite sequences $F_1, ..., F_i$ of sets of strings where $F_i$ contains~$f$ distinct strings of length~$\ell$. By the pigeonhole principle, there must be a string $\sigma$ of length~$\ell$ such that 
\[
\M \bigl( \{ (F_1,\dotsc,F_i) \in T_N \colon \sigma \in F_i \}\bigr)[s] \geq 2^{-k} \cdot f \cdot 2^{-\ell}
\]
Indeed, there is a total $\M$-weight $2^{-k}$ of possible sets $F_i$, each of which contains~$f$ strings of length~$\ell$, thus $\sum_{|\sigma|=\ell} \M \bigl( \{ (F_1,\dotsc,F_i) \in T_N \colon \sigma \in F_i \}\bigr)[s] \geq 2^{-k} \cdot f$, and there are $2^\ell$ strings of length~$\ell$ in total. 
Effectively find such a string~$\sigma$, and set $m(\sigma) > 2^{-\ell + d + e +1 }$. By the choice of $\sigma$, this causes an $\M$-weight of at least $2^{-k} \cdot f \cdot 2^{-\ell}$ of nodes of $T_N[s]$ to leave the tree. Then go back to Step 1. 
\end{itemize}

As in the previous examples, it remains to conduct the ``cost analysis". The resource here again is the weight we are allowed to assign to $m$, which has to be bounded by $\Gamma=1$. At each execution of Step 2 of the $k$-strategy, our cost is the increase of $m$, which is at most of $\gamma(k)=2^{-\ell + d + e +1 }$, while an $\M$-weight of at least $\delta(k) = 2^{-k} \cdot f \cdot 2^{-\ell}$ leaves the tree~$T_N$. This gives a ratio $\gamma(k)/\delta(k) = 2^{d+e+k+1}/f$, which bounds the total cost of the $k$-strategy, so we want $\sum_k \gamma(k)/\delta(k) \leq 1$, i.e., we want:
\[
\sum_k 2^{d(i(k)) + e +k+1}/f(i(k))\leq 1
\]
By assumption, $f(i)/2^{d(i)}$ takes arbitrarily large values. It thus suffices to choose $i(k)$ such that $2^{d(i(k)) + e+1}/f(i(k)) \leq 2^{-2k-1}$.


\end{proof}

\subsection{Compression functions}

Our next example of a family of deep classes is given in terms of compression functions, which were introduced by Nies, Stephan, and Terwijn in \cite{NiesST2005} to provide a characterization of
2-randomness (that is, $\emptyset'$-Martin-L\"of randomness) in terms of incompressibility.

\begin{definition}
A $\K$-compression function with constant~$c>0$ is a function $g: \str \rightarrow \N$ such that $g \leq \K+c$ and $\sum_{\sigma\in\str} 2^{-g(\sigma)} \leq 1$.  We denote by $\CF_c$ be the class of compression functions with constant $c$. 
\end{definition}

The condition $g(\sigma) \leq \K(\sigma)+c$ implies that $g(\sigma) \leq 2|\sigma| + c +c' $ for some fixed constant~$c'$, and therefore $\CF_c$ can be seen as a $\Pi^0_1$ subclass of $\cs$, modulo encoding the functions as binary sequences. 
Of course $\CF_c$ contains the function $\K$ itself so it is a non-empty class. We now show the following. 

\begin{theorem}
For all~$c\geq 0$, the class $\CF_c$ is deep. 
\end{theorem}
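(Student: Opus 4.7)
The plan is to follow the general template the paper has set up: build a prefix-free machine $M$ by the recursion theorem, knowing its coding constant $e$ in advance, so that any pair $(p,\sigma)$ enumerated into $M$ with $|p|=\ell$ forces $\K(\sigma)\leq \ell+e$ and hence $|g(\sigma)|\leq \ell+e+c$ for every $g\in\CF_c$. This gives the weapon to remove nodes from the canonical co-c.e.\ tree $T$ of $\CF_c$: if a node $\eta$ records $|g_\eta(\sigma)|=v$ and we force $\K(\sigma)<v-c$, then $\eta$ cannot extend to any compression function and must leave $T$. Spending $2^{-\ell}$ of the Kraft budget of $M$ per such description, with total budget $1$, will be the resource constraint.

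For each $k$ I let $n(k)=3k+C$ for a constant $C$ chosen at the end, and take $N(k)$ to be the level of $T$ at which every node has specified $g_\eta(\sigma)$ for all $|\sigma|\leq n(k)$. The $k$-strategy waits for a stage $s$ with $\M(T_{N(k)})[s]\geq 2^{-k}$, which by the usual delay trick I may take to be equality. Kraft's inequality applied to each individual $\eta$, namely $\sum_{|\sigma|=n(k)} 2^{-|g_\eta(\sigma)|}\leq 1$, forces at most a $2^{-k}$ fraction of length-$n(k)$ strings $\sigma$ to satisfy $|g_\eta(\sigma)|\leq n(k)-k$. Weighting by $\M(\eta)[s]$ and then applying Markov across uniformly chosen $\sigma$, at least $2^{n(k)-1}$ such $\sigma$ are \emph{good}, in the sense that $\M\bigl(\{\eta\in T_{N(k)}[s]:|g_\eta(\sigma)|>n(k)-k\}\bigr)\geq 2^{-k-1}$. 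I pick a fresh good $\sigma^*$ and enumerate a description of it into $M$ of length $n(k)-k-c-e$, which forces $|g(\sigma^*)|\leq n(k)-k$ for every $g\in\CF_c$ and removes at least $\delta(k)=2^{-k-1}$ of $\M$-weight from $T_{N(k)}$, at a Kraft cost of $\gamma(k)=2^{-n(k)+k+c+e}$.

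Each $k$-strategy strikes at most $2^{k+1}$ times, since every strike permanently debits at least $\delta(k)$ from the total $\M$-mass at level $N(k)$, which is bounded by $1$; fresh good $\sigma^*$ remain available because $2^{n(k)-1}$ dwarfs $2^{k+1}$ as soon as $n(k)\geq k+3$. The total Kraft consumption is then at most $\sum_k 2^{k+1}\gamma(k)=\sum_k 2^{2k-n(k)+c+e+1}$, which with $n(k)=3k+C$ is a geometric series bounded by $1$ once $C\geq c+e+3$, so $M$ is a valid prefix-free machine. Consequently each $k$-strategy eventually stops striking, after which $\M(T_{N(k)})[s]<2^{-k}$ at every later stage, so in the limit $\M(T_{N(k)})\leq 2^{-k}$; by Remark~\ref{rem:deep-order} this establishes the depth of $\CF_c$. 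The main obstacle I anticipate is calibrating the averaging so that a single strike removes enough $\M$-weight while using little Kraft; this is exactly what dictates the linear slope of $n(k)$, and the crucial ingredient is the node-wise Kraft inequality, which provides the expectation bound that feeds into the Markov step.
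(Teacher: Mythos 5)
Your proof is correct, but it takes a genuinely different route from the paper. The paper's proof is indirect: it exhibits a Medvedev reduction $\KK_{f,\ell,d}\leq_s\CF_c$ with $f(i)=2^{i+2}$, $\ell(i)=i+3$, $d(i)=c+1$, by observing that Kraft's inequality $\sum_{|\sigma|=n}2^{-g(\sigma)}\leq 1$ forces at least $2^{n-1}$ strings $\sigma$ of each length $n$ to satisfy $g(\sigma)\geq n-1$, hence $\K(\sigma)\geq n-c-1$, so that any $g\in\CF_c$ uniformly computes an element of $\KK_{f,\ell,d}$; depth of $\CF_c$ then follows from Theorem~\ref{thm:many-complex-strings} together with the upward-closure of depth under $\leq_s$ (Theorem~\ref{thm:filter-medvedev}). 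The paper explicitly notes that a direct proof in the style of the other examples is possible but declines to give one; you supply exactly that. Both proofs turn on the same per-node Kraft bound, but where the $\KK$-proof locates a single heavily $\M$-weighted string by pigeonhole, you locate a ``good'' $\sigma^*$ via a Markov/averaging step over the $\M$-weighted tree; the resulting weight removal per strike, $2^{-k-1}$, agrees with what one gets by composing the paper's reduction with Theorem~\ref{thm:many-complex-strings}, and the budget $\sum_k 2^{k+1}\gamma(k)\leq 1$ goes through with $n(k)=3k+C$ for $C\geq c+e+3$, which also keeps the description lengths $n(k)-k-c-e$ positive. One small point: your Markov bound yields at least $2^{n(k)-1}$ good strings only for $k\geq 2$; this is harmless since depth concerns the tail (or one can inflate $n(0),n(1)$), but it should be flagged. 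The trade-off is clear: the paper's reduction is short and reuses already-proved depth, while your direct construction is self-contained and makes the cost accounting fully explicit.
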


\begin{proof}
Although we could give a direct proof following the same template as the previous examples, we will instead show that for all~$c$, we have $\KK_{f,\ell,d} \leq_s \CF_c$ for some computable~$f, \ell, d$ (where $\KK_{f,\ell,d}$ is the class we defined in the previous section) such that $\ell$ is increasing and $f/2^d$ takes arbitrarily large values, which by Theorem~\ref{thm:filter-medvedev} implies the depth of~$\CF_c$. 

Let $g \in \CF_c$. For all~$n$, since $\sum_{|\sigma|=n} 2^{-g(\sigma)} \leq 1$, there are at most $2^{n-1}$ strings of length~$n$ such that $g(\sigma) < n - 1$, and thus at least $2^{n-1}$ strings $\sigma$ of length~$n$ such that $g(\sigma) \geq n - 1$.  Since $g \in \CF_c$, for each~$\sigma$ such that $g(\sigma) \geq |\sigma| - 1$, we also have $\K(\sigma) \geq |\sigma| - c - 1$. Thus, given~$g \in \CF_c$ as an oracle we can find, for each~$n \geq 3$, $2^{n-1}$ strings $\sigma$ of length~$n$ such that $\K(\sigma) \geq |\sigma| - c - 1$. Setting $f(i)=2^{i+2}$, $\ell(i)=i+3$ and $d(i)=c+1$, we have uniformly reduced $\KK_{f,\ell,d}$ to $\CF_{c}$. Since $d$ is a constant function it is obvious that $f(i)/2^d$ is unbounded, thus $\KK_{f,\ell,d}$ is deep (by Theorem~\ref{thm:many-complex-strings}) and by Theorem \ref{thm:filter-medvedev} so is~$\CF_c$.

\end{proof}

The above result is not tight: the proof in fact shows that if $d$ is not a constant function but is such that $2^n/d(n)$ takes arbitrarily large values, then the class of functions~$g$ such that $g(\sigma) \leq \K(\sigma) + d(|\sigma|)$ for all~$\sigma$ is a deep class. \\

\subsection{A notion related to $\High(\CR,\MLR)$}

In Section \ref{sec:lowness}, we discuss lowness for randomness notions. Here we look at a dual notion, highness for randomness, specifically the class $\High(\CR,\MLR)$, whose precise characterization is an outstanding open question in algorithmic randomness. We shall see that this class is tightly connected to the notion of depth. 

\begin{definition}
A sequence $A\in\cs$ is in the class $\High(\CR,\MLR)$ if  $\MLR \supseteq \CR^A$.
\end{definition}

The class $\High(\CR,\MLR)$ itself is not $\Pi^0_1$ (as it is closed under finite change of prefixes), but all its members have a ``deep property". Bienvenu and Miller~\cite{BienvenuM2012} proved that when $A$ is in $\High(\CR,\MLR)$, then for every c.e.\ set of strings $S$ such that $\sum_{\sigma \in S} 2^{-|\sigma|} < 1$, $A$ computes a martingale~$d$ such that $d(\emptystring)=1$ and for some fixed rational $r>0$, $d(\sigma)>1+r$ for all $\sigma \in S$. It is straightforward to show that real-valued martingales can be approximated by dyadic-valued martingales with arbitrary precision; in particular one can assume that $d(\sigma)$ is dyadic for all~$\sigma$ (and thus can be coded using $f(|\sigma|)$ bits for some computable function~$f$), still keeping the property $\sigma \in S \Rightarrow d(\sigma)>1+r$. For a well-chosen~$S$, such martingales form a deep class. 

\begin{theorem}
Let $S$ be the set of strings~$\sigma$ such that $\K(\sigma) < |\sigma|$ (so that we have $\sum_{\sigma \in S} 2^{-|\sigma|} < 1$). Let $g$ be a computable function and $r>0$ be a rational number. Define the class $\W_{g,r}$ to be the set of dyadic-valued martingales~$d$ such that $d(\emptystring)=1$, $d(\sigma)>1+r$ for all $\sigma \in S$ and such that for all~$\sigma$, $d(\sigma)$ can be coded using $g(|\sigma|)$ bits. Then $\W_{g,r}$ can be viewed as a $\Pi^0_1$ class of $\cs$, and this class is deep. 
\end{theorem}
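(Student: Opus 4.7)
The plan is to verify the $\Pi^0_1$ claim quickly and then follow the template developed in Section~\ref{sec:examples} (in particular the proofs of Theorems~\ref{thm:levin-stephan}, \ref{thm:shiftcomplex}, and~\ref{thm:many-complex-strings}). For the $\Pi^0_1$ claim: once we fix the computable block encoding of a function $d:\str\to\binrat$ (with $g(|\sigma|)$ bits per string $\sigma$, listed length-lexicographically), the requirements $d(\emptystring)=1$ and $2d(\sigma)=d(\sigma 0)+d(\sigma 1)$ become clopen constraints on the coding sequence. The remaining condition ``$\sigma\in S$ implies $d(\sigma)>1+r$'' is the complement of the $\Sigma^0_1$ event ``some $\sigma$ is enumerated into $S$ while its decoded value satisfies $d(\sigma)\le 1+r$'', since $S$ is c.e.\ and $d(\sigma)$ is decoded computably from the sequence. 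Hence $\W_{g,r}$ is $\Pi^0_1$.

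For depth, by the recursion theorem I would build a lower semi-computable discrete semi-measure $m$ whose coding constant $e$ relative to $\m$ is known in advance; assigning $m(\sigma)>2^{-|\sigma|+e+1}$ then forces $\K(\sigma)<|\sigma|$, i.e., $\sigma\in S$, which in turn forces every $d\in\W_{g,r}$ to satisfy $d(\sigma)>1+r$. The key combinatorial ingredient is Kolmogorov's martingale inequality: if $d$ is a martingale with $d(\emptystring)=1$, then for each~$n$,
\[
\#\{\sigma\in\{0,1\}^n : d(\sigma)\le 1+r\} \ge \tfrac{r}{1+r}\,2^n.
\]

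Let $T$ be the canonical co-c.e.\ tree of $\W_{g,r}$. Fix $k$, choose $n=n(k)$ (to be set below), and let $N=N(k)$ be the least level of $T$ at which every value $d_\tau(\sigma)$ for $|\sigma|\le n$ can be decoded from~$\tau$. The $k$-strategy alternates between two steps. In Step~1 one waits for a stage~$s$ at which $\M(T_N)[s]\ge 2^{-k}$ (by delay one may assume equality). In Step~2, averaging the pointwise Kolmogorov inequality against $\M[s]$ over $\tau\in T_N[s]$ yields some $\sigma^*\in\{0,1\}^n$ with
\[
\M(\{\tau\in T_N[s]:d_\tau(\sigma^*)\le 1+r\})[s] \ge \tfrac{r}{1+r}\,\M(T_N)[s] \ge \tfrac{r}{1+r}\,2^{-k}.
\]
Such a $\sigma^*$ can be found effectively since $d_\tau(\sigma^*)$ is a $\Delta^0_1$ predicate of~$\tau$. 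One then sets $m(\sigma^*)>2^{-n+e+1}$, which enumerates $\sigma^*$ into $S$ and removes from $T_N$ an $\M[s]$-weight of at least $\delta(k)=\tfrac{r}{1+r}\,2^{-k}$, and returns to Step~1.

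The cost accounting proceeds as in the template. The subsets of $\{0,1\}^N$ removed at successive executions of Step~2 are disjoint, each of $\M$-weight $\ge\delta(k)$, and $\sum_{\tau\in\{0,1\}^N}\M(\tau)\le 1$, so Step~2 executes at most $1/\delta(k)$ times. Each execution costs at most $\gamma(k)=2^{-n(k)+e+2}$ of $m$-weight, so the $k$-strategy spends at most $\gamma(k)/\delta(k)=\tfrac{1+r}{r}\,2^{e+2}\,2^{k-n(k)}$. Setting $n(k)=2k+c$ for a suitable constant $c$ depending only on $e$ and $r$ yields $\sum_k\gamma(k)/\delta(k)\le 1$, so $m$ is a genuine discrete semi-measure. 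We conclude $\M(T_{N(k)})\le 2^{-k}$ for all $k$, which gives depth by Remark~\ref{rem:deep-order}. The only real obstacle is verifying the averaging step and the global $m$-budget simultaneously; Kolmogorov's inequality does the former and the disjointness of the removed subsets at a fixed level~$N$ does the latter.
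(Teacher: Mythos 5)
Your proof is correct, but it takes a different route from the paper. Where the paper establishes depth of $\W_{g,r}$ indirectly --- showing $\KK_{f,\ell,h}\leq_s\W_{g,r}$ for a suitable triple $(f,\ell,h)$ (essentially $f(n)=n$, $\ell(n)=n+c$, $h\equiv 0$) and then appealing to Theorem~\ref{thm:many-complex-strings} together with the Medvedev filter result (Theorem~\ref{thm:filter-medvedev}) --- you carry out a direct construction with the $k$-strategy template, building a discrete semi-measure $m$ by the recursion theorem and extracting, via Markov/Kolmogorov's inequality and an averaging argument against $\M[s]$, a single string $\sigma^*$ of length $n(k)$ whose enumeration into $S$ kills an $\M$-weight of at least $\tfrac{r}{1+r}2^{-k}$ at level $N(k)$. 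The underlying combinatorial fact is identical in both proofs (at most a $\tfrac{1}{1+r}$ fraction of strings at any fixed length can have martingale value above $1+r$); the paper feeds it into a Medvedev reduction, which is more modular and reuses $\KK_{f,\ell,h}$, while your version is self-contained and does not depend on the earlier $\KK$ theorem. Two small points worth tightening if you write this up carefully: (1) the claim that $m(\sigma^*)>2^{-n+e+1}$ forces $\K(\sigma^*)<n$ silently absorbs the additive constant from the coding theorem $\K=-\log\m+O(1)$ into $e$; you should either fold that constant into $e$ explicitly or work with a KC-machine built by the recursion theorem, so the inequality $\K(\sigma^*)<|\sigma^*|$ is clean. (2) After Step~2 increases $m$ on $\sigma^*$, wait until $\sigma^*$ actually appears in $S$ before returning to Step~1 --- this guarantees the removed subsets at successive executions really are disjoint and that the same $\sigma^*$ is never picked twice, which is what justifies the $1/\delta(k)$ bound on the number of executions.
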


\begin{proof}
Again, we are going to show the depth of $\W_{g,r}$ by  a Medvedev reduction from $\KK_{f,\ell,h}$ for some increasing computable function~$\ell$ and computable functions $f$ and $h$ such that $f/2^h$ is unbounded. More precisely, we will take $f(n)=n$, $\ell(n)=n + c$ (for a well-chosen $c$ given below) and $h(n)=0$. Now suppose we are given a martingale~$d \in \W_{f,r}$. For any given length~$n$, we have, by the martingale fairness condition,
\[
\sum_{\sigma: |\sigma|=n} d(\sigma) = 2^n.
\]
It follows that there are at most $2^n/(1+r)$ strings~$\sigma$ of length~$n$ such that $d(\sigma)>1+r$, and therefore at least $2^n-2^n/(1+r)$ strings such that $d(\sigma) \leq 1+r$. Having oracle access to~$d$, such strings can be effectively found and listed. Since $2^n-2^n/(1+r) > n -c$ for all~$n$ and some fixed constant~$c$, for each $n$ we can use $d$ to list $n$ strings $\sigma_1,\dotsc,\sigma_n$ of length~$n+c$ such that $d(\sigma_i) \leq 1+r$ for $1\leq i\leq n$. But by definition of $d$, $d(\sigma)\leq 1+r$ implies that $\sigma \notin S$, which further implies that $\K(\sigma) \geq |\sigma|$. This shows that $\W_{g,r}$ is above $\KK_{f,\ell,d}$ in the Medvedev degrees. By Theorem~\ref{thm:many-complex-strings},  $\KK_{f,\ell,h}$ is deep, and hence by Theorem \ref{thm:filter-medvedev}, $\W_{g,r}$ is deep as well. 

\end{proof}

The examples of deep classes provided in this section, combined with Theorem~\ref{thm:diff-deep}, give us the results mentioned page~\pageref{page:discussion-diff-deep}: If $X$ is a difference random sequence,\begin{itemize}
\item it does not compute any shift-complex sequence (Khan);
\item it does not compute any $\DNC_q$ function when $q$ is a computable order function such that $\sum_n 1/q(n) = \infty$ (Miller);
\item it does not compute any compression function (Greenberg, Miller, Nies);
\item it is not in $\High(\CR,\MLR)$ (Greenberg, Miller, Nies).
\end{itemize} 
The fact that a difference random cannot compute large sets of complex strings (in the sense of Theorem~\ref{thm:many-complex-strings}) appears to be new.

\section{Lowness and depth}\label{sec:lowness}

Various lowness notions have been well-studied in algorithmic randomness, where a lowness notion is given by a collection of sequences that are in some sense computationally weak.  Many lowness notions take the following form:  For a relativizable collection $\sS\subseteq\cs$, we say that $A$ is \emph{low for $\sS$} if $\sS\subseteq\sS^A$.  For instance, if we let $\sS=\MLR$, then the resulting lowness notion consists of the sequences that are \emph{low for Martin-L\"of random}, a collection we write as $\Low(\MLR)$.

Lowness notions need not be given in terms of relativizable classes.  For instance, if we let $\m^A$ be a universal $A$-left-c.e.\ discrete semi-measure, we can defined $A$ to be \emph{low for $\m$} if $\m^A(\sigma)\leq^\times\m(\sigma)$ for every $\sigma$.
In addition, some lowness notions are not given in terms of relativization, such as the notion of \emph{$\K$-triviality}, where $A\in\cs$ is $\K$-trivial if and only if $\K(A\uh n)\leq \K(n)+O(1)$ (where we take $n$ to be $1^n$).  Surprisingly, we have the following result (see~\cite{Nies2009} for a detailed survey of results in this direction).
\begin{theorem} Let $A\in\cs$.  The following are equivalent:
\begin{itemize}
\item[(i)] $A\in\Low(\MLR)$; 
\item[(ii)] $A$ is low for $\m$;
\item[(iii)] $A$ is $\K$-trivial.
\end{itemize}
\end{theorem}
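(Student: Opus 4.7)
My plan is to prove the three equivalences by establishing the cycle (iii)$\Rightarrow$(i)$\Rightarrow$(ii)$\Rightarrow$(iii), where the last implication is the substantive technical core of the theorem and the first two are comparatively soft.

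For (ii)$\Rightarrow$(iii), I would first apply the relativized coding theorem $\K^A(\sigma)=-\log \m^A(\sigma)+O(1)$ together with the unrelativized one. The hypothesis $\m^A\leq^* \m$ then translates, after taking $-\log$, into $\K(\sigma)\leq^+ \K^A(\sigma)$ for every $\sigma$; combined with the trivial inequality $\K^A\leq^+\K$ this yields $\K=^+\K^A$, i.e.\ $A$ is low for $\K$. To extract $\K$-triviality from this, I would specialize to $\sigma=A\uh n$ and chain two inequalities: $\K(A\uh n)\leq^+\K^A(A\uh n)\leq^+\K^A(n)\leq^+\K(n)$, the middle step using that $A\uh n$ is $A$-computable from $n$.

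The implication (iii)$\Rightarrow$(i) I would handle through the intermediate notion of being a \emph{base for Martin-L\"of randomness}. The strategy is to show that every $\K$-trivial $A$ is Turing-below some $X\in\MLR^A$; the standard Hirschfeldt--Nies--Stephan argument then shows such an $A$ must be low for Martin-L\"of randomness. Concretely, from the bound $\K(A\uh n)\leq^+\K(n)$ one uses a Kra\v{c}ft--Chaitin style request set argument to build, relative to some $X\in\MLR$, a functional witnessing $A\leq_T X$, and verifies that the ``no randomness from nothing'' principle forces $X\in\MLR^A$.

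The main obstacle, and the real depth of the theorem, is (i)$\Rightarrow$(ii): from $\MLR\subseteq\MLR^A$ we need to recover $\m^A\leq^*\m$. My plan here is to use Nies' decanter (``golden run'') method. Given an $A$-lower semi-computable discrete semi-measure $m^A$, one designs an unrelativized Martin-L\"of test whose levels $\mathcal{U}_k$ collect those oracles $Z$ along which $m^Z$ grows too much along the approximation of $A$; the hypothesis that every $Z\in\MLR$ remains in $\MLR^A$ (in particular, avoids all levels of this test past some stage) is exploited to show that the contribution of $m^A$ that would \emph{not} be dominated by $\m$ must come from a measure-zero set of oracle approximations, and can therefore be captured by an unrelativized bounded-weight request. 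A careful accounting via the decanter combinatorics then yields the required inequality $m^A\leq^*\m$, and taking $m^A$ universal gives (ii). Throughout, the delicate point is the bookkeeping that ensures the total unrelativized weight remains finite despite unboundedly many $A$-changes; this is precisely where the ``golden'' stage is isolated and is where I expect the argument to demand the most care.
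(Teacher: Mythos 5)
The paper itself does not prove this theorem; it cites it as a known result (the Nies/Hirschfeldt--Nies--Stephan theorem) from \cite{Nies2009}. So there is no ``paper proof'' to compare against, but the proposal can be checked against the standard proofs, and it has one real gap.

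Your step (ii)$\Rightarrow$(iii) is correct: the coding theorem (and its relativization) converts $\m^A\leq^*\m$ into $\K\leq^+\K^A$, hence $\K=^+\K^A$, and then the chain $\K(A\uh n)\leq^+\K^A(A\uh n)\leq^+\K^A(n)\leq^+\K(n)$ gives $\K$-triviality exactly as you say. Your step (i)$\Rightarrow$(ii) is also where a decanter/golden-run argument genuinely lives, and you are right to flag the weight-bookkeeping as the delicate part.

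The gap is in (iii)$\Rightarrow$(i), which you describe as ``comparatively soft'' but which is in fact the hard Nies implication. Two problems. First, the Hirschfeldt--Nies--Stephan result you invoke goes the other way: it shows that a \emph{base} for Martin-L\"of randomness is $\K$-trivial, not that a $\K$-trivial set is a base or is low for randomness. Invoking it here is circular: you would be assuming the very lowness you are trying to establish. Second, the ``no randomness from nothing'' (preservation of randomness) principle says that if $X\in\MLR$ and $\Phi$ is a suitable functional, then $\Phi(X)$ is random for the induced measure $\lambda_\Phi$; it says nothing about $X$ being random \emph{relative to} $\Phi(X)$. To conclude $X\in\MLR^A$ from $X\in\MLR$, $A\leq_T X$, and $A$ $\K$-trivial, one already needs $A$ to be low for $\MLR$ (or low for $\K$), which is precisely where the golden-run machinery must be deployed. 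In the standard presentations this implication is carried out as $\K$-trivial $\Rightarrow$ low for $\K$ via the golden run, after which $\Low(\MLR)$ follows immediately, and separately the converse direction $\Low(\MLR)\Rightarrow$ base $\Rightarrow \K$-trivial uses the Hirschfeldt--Nies--Stephan hungry-sets/cost-function argument. Your cycle must still pay for both of these techniques, but as written it places the hard work only in (i)$\Rightarrow$(ii) and gives an argument for (iii)$\Rightarrow$(i) that does not close.
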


The cupping problem, a longstanding open problem in algorithmic randomness involving $\K$-triviality, is to determine whether there exists a $\K$-trivial sequence $A$ and some Martin-L\"of random sequence $X\not\geq_T\emptyset'$ such that $X\oplus A\geq_T\emptyset'$.  A negative answer was recently provided by Day and Miller \cite{DayM-Sub}:

\begin{theorem}[Day-Miller~\cite{DayM-Sub}] \label{thm:day-miller}
A sequence~$A$ is K-trivial if and only if for every difference random sequence~$X$, $X \oplus A \not\geq_T \emptyset'$. 
\end{theorem}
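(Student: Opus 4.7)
The plan is to handle both directions through the Franklin--Ng characterization of difference randomness (Theorem~2.5), which reduces the statement to: $A$ is K-trivial if and only if for every $X \in \MLR$ with $X \not\geq_T \emptyset'$ one also has $X \oplus A \not\geq_T \emptyset'$.

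For the forward direction, suppose $A$ is K-trivial, $X$ is difference random, and toward a contradiction $X \oplus A \geq_T \emptyset'$. Since $\emptyset'$ has $\PA$ degree, $X \oplus A$ computes some member of $\sPA$. The key step is to propagate this downward to $X$: show that a K-trivial oracle cannot genuinely help in reaching a deep $\Pi^0_1$ class. Once $X$ is seen to have $\PA$ degree, Stephan's Theorem~2.11 together with $X \in \MLR$ gives $X \geq_T \emptyset'$, contradicting Franklin--Ng. Concretely, given a Turing functional $\Gamma$ witnessing $\Gamma^{X \oplus A} \in \sPA$, one considers for each $n$ the effectively open set $\mathcal{V}_n$ of oracles $Z$ such that $\Gamma^{Z \oplus B}$ has a length-$h(n)$ initial segment in the canonical co-c.e.\ tree $T$ of $\sPA$ for \emph{some} K-trivial $B$. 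The plan is to use the low-for-$\m$ property of K-trivials to show $\lambda(\mathcal{V}_n) = O(\M(T_{h(n)})) < 2^{-n + O(1)}$, packaging these into a difference test that $X$ must fail.

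For the converse, if $A$ is not K-trivial, I would build a difference random $X$ with $X \oplus A \geq_T \emptyset'$ via a Ku\v cera-style coding. The failure of K-triviality yields, at infinitely many positions, ``excess'' information in $A$ (in the form of a cost function that $A$ fails to obey) that can be used to decode $\emptyset'$ from a Martin-L\"of random $X$ whose bits away from the coding positions are unconstrained. A Lebesgue-density argument inside a positive-measure $\Pi^0_1$ subclass of $\MLR$ produces such $X$, and the same argument---observing that $\{Z : Z \geq_T \emptyset'\}$ is null---lets us ensure $X \not\geq_T \emptyset'$, so that $X$ is difference random by Franklin--Ng.

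The principal obstacle is the forward direction: na\"ively relativizing Franklin--Ng to $A$ is circular, since ``$X$ is difference random relative to $A$'' is itself equivalent to $X \oplus A \not\geq_T \emptyset'$, which is what we are trying to prove. The genuine content is the quantitative assertion that K-trivial oracles provide no effective help against deep $\Pi^0_1$ classes, and extracting this from the depth of $\sPA$ together with the low-for-$\m$ property of K-trivials constitutes the technical heart of Day and Miller's proof.
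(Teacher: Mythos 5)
The paper does not prove this theorem; it is cited directly from Day and Miller~\cite{DayM-Sub} and used as a black box (notably in the proof of Theorem~\ref{thm:k-trivial-deep-cupping}). So there is no in-paper proof to compare against, and your sketch can only be judged on its own terms.

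Your forward direction has the right overall shape---route $\emptyset'$-computability through the deep class $\sPA$ and argue that a K-trivial oracle cannot help reach it---but the central step fails. The set $\mathcal{V}_n$ you define, consisting of all $Z$ such that $\Gamma^{Z\oplus B}$ lands in $T_{h(n)}$ for \emph{some} K-trivial $B$, is not effectively open: the K-trivials form a $\Sigma^0_3$ class, and quantifying over all of them destroys the effectivity needed for an unrelativized difference test. Even ignoring definability, the measure bound does not follow from low-for-$\m$: that property gives $\m^B(T_{h(n)}) \leq c_B \cdot \m(T_{h(n)})$ with the multiplicative constant $c_B$ depending on $B$, and there is no uniform constant over all K-trivials, so $\lambda(\mathcal{V}_n)$ is not controlled. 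If you instead fix $B = A$, you obtain only an $A$-difference test; relativizing Franklin--Ng (Theorem~\ref{thm:franklin-ng}) then yields $X \oplus A \geq_T A'$, but since K-trivials are low this is exactly the hypothesis $X \oplus A \geq_T \emptyset'$ you started from---no contradiction. This is precisely why the paper's Theorem~\ref{thm:k-trivial-deep-cupping} \emph{invokes} Day--Miller rather than reproving it: using depth plus low-for-$\m$ on a single K-trivial $A$ only gets you a relativized test, and closing the loop requires the cost-function characterization of K-triviality and a careful accounting of how much measure a changing computable approximation to $A$ can destroy, which is what produces an \emph{unrelativized} test in Day and Miller's argument. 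Your converse sketch (Ku\v{c}era-style coding driven by a cost-function failure) is plausible in outline but is likewise only a pointer to where the real work lies, not an argument.
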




Using the notion of depth, we can strengthen the Day-Miller result.  

\begin{theorem}\label{thm:k-trivial-deep-cupping}
Let $X$ be an incomplete Martin-L\"of random sequence and $A$  be $K$-trivial. Then $X \oplus A$ does not compute any member of any deep $\Pi^0_1$ class. 
\end{theorem}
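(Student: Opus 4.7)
The plan is to string together three results already on the table: the Franklin--Ng characterization of difference randomness (Theorem~\ref{thm:franklin-ng}), the Day--Miller theorem on the cupping problem (Theorem~\ref{thm:day-miller}), and the depth criterion for difference randoms (Theorem~\ref{thm:diff-deep}). In short, I plan to show that $X\oplus A$ is itself difference random, whence the conclusion is immediate.

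First, since $X$ is Martin-L\"of random and $X \not\geq_T \emptyset'$, Theorem~\ref{thm:franklin-ng} tells us that $X$ is difference random. Because $A$ is $K$-trivial, Day--Miller (Theorem~\ref{thm:day-miller}) applied to $A$ and to the difference random $X$ yields
\[
X \oplus A \not\geq_T \emptyset'.
\]

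Second, I need that $X \oplus A \in \MLR$. I would use the well-known equivalence (stated in the paper just before Theorem~\ref{thm:day-miller}) that every $K$-trivial sequence is low for Martin-L\"of random, so $\MLR = \MLR^A$. In particular $X \in \MLR^A$. A standard one-line observation shows that $X \in \MLR^A$ implies $X \oplus A \in \MLR^A$: any $A$-Martin-L\"of test $(\U_i^A)$ aiming at $X \oplus A$ pulls back, using $A$ as oracle on the odd positions, to an $A$-Martin-L\"of test aiming at $X$, with the same measure bounds. Combining, $X \oplus A \in \MLR^A = \MLR$.

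Third, having $X\oplus A \in \MLR$ and $X\oplus A \not\geq_T \emptyset'$ in hand, Theorem~\ref{thm:franklin-ng} again gives that $X \oplus A$ is difference random. Finally, by Theorem~\ref{thm:diff-deep}, no difference random sequence computes a member of a deep $\Pi^0_1$ class, which yields the conclusion.

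The main obstacle is really just to cite Day--Miller cleanly and to justify the van-Lambalgen-style observation that $X \in \MLR^A$ implies $X \oplus A \in \MLR^A$; once these are verified the statement drops out of the earlier machinery, without needing the partial relativization of depth introduced just before the theorem. That relativization does suggest a second, more self-contained route: one could instead argue that for $K$-trivial $A$ the equality $\m^A =^* \m$ forces every deep $\Pi^0_1$ class to remain deep relative to $A$, and then apply a suitable relativization of Theorem~\ref{thm:diff-deep} to $X\oplus A$, which is $A$-difference random by analogous reasoning. Both routes bypass any new combinatorial work and rest on the lowness properties of $K$-trivials.
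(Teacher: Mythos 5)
There is a genuine gap in the central step of your first route, and the same error also undermines the sketched second route. You claim that $X \in \MLR^A$ implies $X \oplus A \in \MLR^A$, and hence (using lowness of $A$) that $X \oplus A \in \MLR$. This is false whenever $A$ is non-computable and $K$-trivial. By van~Lambalgen's theorem, $X \oplus A \in \MLR$ if and only if $X \in \MLR$ and $A \in \MLR^X$; but a $K$-trivial $A$ is as far from random as possible, so $A \notin \MLR$, a fortiori $A \notin \MLR^X$, and thus $X \oplus A \notin \MLR$. The ``pullback'' argument fails precisely because the map $Z \mapsto Z \oplus A$ does not preserve measure bounds: for the $A$-ML test $\U_i^A = \llb\{\sigma : |\sigma| = 2i,\ \sigma(2j+1)=A(j)\ \text{for all}\ j<i\}\rrb$ one has $\lambda(\U_i^A) = 2^{-i}$ and $X \oplus A \in \bigcap_i \U_i^A$, yet the set $\{Z : Z \oplus A \in \U_i^A\}$ is all of $\cs$. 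So the measure blows up and you do not obtain an $A$-test for $X$. Consequently, $X\oplus A$ is not difference random (nor $A$-difference random), and Theorem~\ref{thm:diff-deep} cannot be applied to $X \oplus A$ as you propose.

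The paper's proof circumvents exactly this obstacle. It never asserts that $X \oplus A$ has any randomness property. Instead, supposing $\Phi^{X \oplus A}$ lands in the deep class $\C$ with co-c.e.\ tree $T$, it regards $Z \mapsto \Phi^{Z \oplus A}$ as an \emph{$A$-Turing functional} $\Psi$ and forms the $A$-difference test
\[
\D_n = \{Z : \Psi^Z \uh f(n) \downarrow \in T_{f(n)}\},
\]
whose $\lambda$-measure is controlled by $\M^A(T_{f(n)})$, which in turn is small because $A$, being low for $\m$, preserves depth. This shows that $X$ (not $X \oplus A$) fails to be $A$-difference random; since $X$ \emph{is} $A$-Martin-L\"of random (here lowness of $A$ is used correctly, on $X$), the relativized Franklin--Ng theorem gives $X \oplus A \geq_T A' \equiv_T \emptyset'$, contradicting Day--Miller. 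Your instinct to invoke Day--Miller, Franklin--Ng, and depth preservation under $K$-triviality is right, but the randomness must be attributed to $X$ with $A$ as oracle, not to the join $X \oplus A$.
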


\begin{proof}
First, we need to partially relativize the notion of depth:  for $A\in\cs$, a $\Pi^0_1$ class $\C\subseteq\cs$ is \emph{deep relative to $A$} if there is some computable order function $f$ such that $\m^A(T_{f(n)})\leq 2^{-n}$ if and only if there is some $A$-computable order function $g$ such that $\M^A(T_{g(n)})\leq 2^{-n}$ (where $\M^A$ is a universal $A$-left-c.e.\ continuous semi-measure).

Let $\C$ be a deep $\Pi^0_1$ class with canonical co-c.e.\ tree~$T$ and let~$f$ be a computable function such that $\m(T_{f(n)}) \leq 2^{-n}$. Since~$A$ is low for~$\m$, we have also have $\m^A(T_{f(n)}) \leq^\times 2^{-n}$, and thus $\C$ is deep relative to~$A$. 

Let~$X$ be a sequence such that $X \oplus A$ computes a member of $\C$ via a Turing functional $\Phi$ and suppose, for the sake of contradiction, that $X$ is difference random. Let
\[
\D_n = \{Z \, : \, \Phi^{Z \oplus A} \uh f(n)  \downarrow  \, \in T_{f(n)}\}.
\]
The set $\D_n$ can be written as the difference $\U_n \setminus \V_n$ of two $A$-effectively open sets (uniformly in~$n$) with $\U_n = \{Z \, : \, \Phi^{Z \oplus A} \uh f(n)  \downarrow\}$ and $\V_n= \{Z \, : \, \Phi^{Z \oplus A} \uh f(n)  \downarrow  \notin T_{f(n)}\}$

We can see the functional $Z \mapsto \Phi^{Z \oplus A}$ as an $A$-Turing functional $\Psi$, and thus by the univerality of $\M^A$ for the class of $A$-left-c.e.\ continuous semi-measures, we have $\M^A \geq^\times \lambda_\Psi$. By definition of $\D_n$, we therefore obtain:
\[
\lambda(\D_n) \leq \lambda_\Psi(T_{f(n)}) \leq^\times \M^A(T_{f(n)}) \leq 2^{-n}.
\]
This shows that the sequence~$X$, which by assumption belongs to all~$\D_n$, is not $A$-difference random. It is, however, $A$-Martin-L\"of random as~$A$ is low for Martin-L\"of randomness. Relativizing Theorem~\ref{thm:franklin-ng} to $A$, this shows that $X \oplus A \geq_T A'$. But this contradicts the Day-Miller theorem (Theorem~\ref{thm:day-miller}). 

\end{proof}

As we cannot compute any member of a deep class by joining a Martin-L\"of random sequence with a low for Martin-L\"of random sequence, it is not unreasonable to ask if there is a notion of randomness $\mathcal{R}$ such that we cannot $\tt$-compute any members of a $\tt$-deep class by joining an $\mathcal{R}$-random sequence with a low-for-$\mathcal{R}$ sequence.  We obtain a partial answer to this question using Kurtz randomness.

From the discussion of lowness at the beginning of this section, we have $A\in\Low(\KR)$ if and only if $\KR\subseteq\KR^A$.  Moreover, we define the class $\Low(\MLR,\KR)$ to be the collection of sequences $A$ such that $\MLR\subseteq\KR^A$.  Since $\MLR\subseteq\KR$, it follows that $\Low(\KR)\subseteq\Low(\MLR,\KR)$.  Miller and Greenberg \cite{GreenbergM2009} obtained the following characterization of $\Low(\KR)$ and $\Low(\MLR,\KR)$.  Recall that $A\in\cs$ is computably dominated if every $f\leq_T A$ is dominated by some computable function.

\begin{theorem}[Greenberg-Miller~\cite{GreenbergM2009}] \label{thm:greenberg-miller-kr}
Let $A\in\cs$.
\begin{itemize}
\item[(i)] $A\in\mathrm{Low}(\MLR,\KR)$ if and only if $A$ is of non-DNC degree. 
\item[(ii)] $A\in\mathrm{Low}(\KR)$ if and only if $A\in\mathrm{Low}(\MLR,\KR)$ and computably dominated. 
\end{itemize}
\end{theorem}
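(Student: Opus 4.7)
The plan is to prove each direction of each part by contrapositive, leveraging two standard facts: (a) the Kjos-Hanssen--Merkle--Stephan characterisation that $A$ is of DNC degree if and only if $A$ computes an incompressible function $f$ (one with $\K(f(n))\ge n-O(1)$); and (b) the classical equivalence that $A$ is computably dominated if and only if every $A$-computable function is majorised by a computable function.

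\emph{Part (i), direction} $(\Leftarrow)$. Suppose $A$ is of non-DNC degree but some $X\in\MLR$ fails to be $A$-Kurtz random. Then $X\in[T]$ for an $A$-co-c.e.\ tree $T$ with $\lambda([T])=0$, so there is an order $g\le_T A$ with $|T_{g(n)}|<2^{g(n)-n}$. Indexing $X\uh g(n)$ inside the $A$-enumerable finite set $T_{g(n)}$ uses only $g(n)-n$ bits given $A$, bounding the $A$-relativised plain complexity of $X\uh g(n)$ by $g(n)-n+O(1)$. Combined with $\K(X\uh g(n))\ge g(n)-O(1)$ from ML-randomness, a standard extraction yields an $A$-computable incompressible function, contradicting (a). \emph{Direction} $(\Rightarrow)$. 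Given a DNC function $f\le_T A$, use $f$ in a Ku\v cera-style construction to define an $A$-computable tree that at each level $n$ excises the strings coded by $U(f(n))$; the diagonalisation against the universal machine drives $\lambda([T])\to 0$, while a Chaitin--Ku\v cera type measure bound on the $\m$-weight of the retained paths guarantees the existence of Martin-L\"of randoms inside $[T]$.

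\emph{Part (ii), direction} $(\Rightarrow)$. The inclusion $\MLR\subseteq\KR$ immediately gives $\Low(\KR)\subseteq\Low(\MLR,\KR)$. For the computable-domination half, if some $f\le_T A$ escapes every computable function, partition $\N$ into blocks whose lengths are set by $f$ and within each block use $A$ to designate a narrow ``forbidden'' region of vanishing relative measure, yielding a null $\Pi^{0,A}_1$ class that captures some sequence constructed freely on the complementary bits to be Kurtz random. \emph{Direction} $(\Leftarrow)$. Assume $A$ is non-DNC and computably dominated, and let $X$ be Kurtz random lying in a null $\Pi^{0,A}_1$ class $[T]$ with $T\le_T A$. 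The $A$-computable map $n\mapsto$ (least $\ell$ with $\lambda([T_\ell])<2^{-n}$) is dominated by a computable $\tilde g$, and the $A$-runtime needed to enumerate $\{0,1\}^{\tilde g(n)}\setminus T_{\tilde g(n)}$ is also $A$-computable and hence computably bounded. Using the co-c.e.\ nature of $T$ together with these computable bounds, one produces a computable sequence of $\Sigma^0_1$ open sets of measure $<2^{-n}$ covering $X$, contradicting Kurtz randomness.

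The principal obstacle is the transfer step in (ii)$(\Leftarrow)$: converting an $A$-computable null tree into a genuine $\Pi^0_1$ cover without multiplicatively blowing up the measure. A naive union over all consistent oracle continuations multiplies the measure by $2^{\mathrm{use}}$; instead one must exploit that $T$ is \emph{co}-c.e.\ in $A$ (not merely c.e.) together with the computably bounded runtime to certify removals without guessing the oracle. A related subtlety in (i)$(\Leftarrow)$ is that the short $A$-description of $X\uh g(n)$ bounds $A$-relativised plain complexity but does not directly yield a DNC function, so the passage to DNC requires the Kjos-Hanssen--Merkle--Stephan equivalence, or a direct combinatorial extraction of diagonalisation from such complexity drops along an ML-random.
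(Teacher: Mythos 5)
This theorem is cited from Greenberg and Miller~\cite{GreenbergM2009} and is not proved in the paper, so there is no in-paper argument to compare yours against. Taking your sketch on its own terms: the overall decomposition into four contrapositive directions is sensible, and routing part~(i)$(\Leftarrow)$ through the Kjos-Hanssen--Merkle--Stephan complexity characterization of DNC degree is a legitimate strategy. But the two obstacles you flag at the end are not side remarks --- they are the technical heart of the theorem, and as written they remain unresolved, so this is a proof outline with genuine gaps rather than a proof.

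Concretely, in (i)$(\Leftarrow)$ the phrase ``a standard extraction yields an $A$-computable incompressible function'' conceals the main step: $X$ is not $A$-computable, so $n\mapsto X\uh g(n)$ is not the function you get to use. What you can do is take $h(n)$ to be a canonical index for the finite set $T_{g(n)}$ (which \emph{is} $A$-computable) and argue $\K(h(n))\geq n - O(\log n)$ because $X\uh g(n)$ is recoverable from $h(n)$ plus $g(n)-n$ index bits, but then one has to check that the slack is compatible with the KHMS characterization (which is stated with a computable order, so this can be made to work --- but it needs saying). In (ii)$(\Leftarrow)$, the measure-blowup problem you identify is real, and computable domination alone does not fix it: $T$ is only co-c.e.\ in $A$, so the clopen sets $[T_{\tilde g(n)}]$ are not computably given, and a naive pass over all oracle strings of bounded use destroys the measure bound. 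Greenberg and Miller close both gaps using the characterization of non-DNC degree as infinitely often c.e.\ traceable (the same tool this paper uses, via H\"olzl--Merkle, in the proof of Proposition~\ref{prop:low-for-tt-depth}); the trace lets one replace $A$-computable data by a uniformly c.e.\ family of small candidate sets, which is exactly what is needed to turn the $A$-computable objects into computable Kurtz tests in (ii) and into Martin-L\"of tests in (i). Without invoking that characterization or something equivalent, neither flagged step goes through.
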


For $A\in\cs$, a $\Pi^0_1$ class $\C$ is \emph{$\tt$-negligible relative to $A$} if $\mu^A(\C)=0$ for every $A$-computable measure $\mu^A$.  We first prove the following.

\begin{proposition}\label{prop:low-for-tt-depth}
If $A\in\mathrm{Low}(\MLR,\KR)$, then every deep $\Pi^0_1$ class is $\tt$-negligible relative to~$A$. 
\end{proposition}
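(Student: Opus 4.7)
The plan is to argue by contradiction, combining the Greenberg--Miller characterization $A\in\Low(\MLR,\KR)\iff A$ is of non-DNC degree (Theorem~\ref{thm:greenberg-miller-kr}(i)) with the depth witness of $\C$. Assume $A\in\Low(\MLR,\KR)$ and suppose, for the sake of contradiction, that some $A$-computable measure $\mu$ satisfies $\mu(\C)>0$, where $\C$ is a deep $\Pi^0_1$ class with canonical co-c.e.\ tree $T$ and computable order $f$ such that $\M(T_{f(n)})<2^{-n}$.

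First I would apply the relativization of Theorem~\ref{thm-MachinesInduceSemiMeasures}(ii) to write $\mu=\lambda_\Phi^A$ for some $A$-almost-total Turing functional $\Phi$, so that the preimage $\Phi^{-1}(\C)=\{Y:\Phi^A(Y)\in\C\}$ has Lebesgue measure $\mu(\C)>0$ and in particular contains a Martin--L\"of random sequence~$Y$. By the hypothesis on~$A$, this $Y$ must lie in $\KR^A$. The crux is then to derive a contradiction with $Y\in\KR^A$ by producing an $A$-effectively closed Lebesgue-null class that contains~$Y$.

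Using depth, one has the natural family of tests
\[
\mathcal{Z}_n=\{Z\in\cs:\Phi^A(Z)\uh f(n)\downarrow\in T_{f(n)}\},
\]
each a difference of two $A$-effectively open sets containing~$Y$. The immediate difficulty is that $\lambda(\mathcal{Z}_n)=\mu(T_{f(n)})$ is bounded below by $\mu(\C)>0$, so the $\mathcal{Z}_n$ are not themselves an $A$-Kurtz test. To repair this, I would exploit the fact that the \emph{absolute} $\M$-mass of $T_{f(n)}$ is less than $2^{-n}$, combine the resulting absolute difference test $\{Z:\widehat\Phi(Z)\uh f(n)\in T_{f(n)}\}$ arising from the universal Turing functional $\widehat\Phi$ inducing $\M$ with the non-DNC property of~$A$, and thereby refine the $\mathcal{Z}_n$ into an $A$-definable sub-family whose Lebesgue measures are comparable to $\M(T_{f(n)})$ rather than $\mu(T_{f(n)})$ while still trapping some Martin--L\"of random sequence in $\Phi^{-1}(\C)$; taking the intersection over $n$ then gives the required $\Pi^{0,A}_1$ Lebesgue-null class containing~$Y$.

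The main obstacle is exactly this last step. The tension is that the Lebesgue mass of the natural $A$-effective test is governed by~$\mu$, which by assumption does not tend to zero, whereas the quantity that does tend to zero effectively is $\M(T_{f(n)})$, which is an absolute (not $A$-relative) semi-measure; in particular, $\M^A(T_{f(n)})$ need not be small for a generic non-DNC oracle~$A$, so the easy bound $\mu\leq^{*}\M^A$ is insufficient. Bridging this gap is where the full strength of the Greenberg--Miller characterization is used: one must encode $A$ into the input of the absolute universal functional $\widehat\Phi$ carefully enough that the resulting test remains $\Pi^{0,A}_1$ and null, and it is the non-DNC property of~$A$ that prevents $A$ from systematically destroying these bounds during the encoding.
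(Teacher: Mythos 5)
Your setup is sound and you correctly diagnose the central obstruction: the natural $A$-relative tests $\mathcal{Z}_n$ have Lebesgue mass governed by the $A$-computable measure $\mu$, which by hypothesis is bounded away from $0$, while the quantity that tends to $0$ effectively, $\M(T_{f(n)})$, is an \emph{absolute} semi-measure and in general $\M^A(T_{f(n)})$ is not small. But having named the gap, your proposal does not close it. The final paragraph --- ``encode $A$ into the input of the absolute universal functional $\widehat\Phi$ carefully enough \dots\ it is the non-DNC property of $A$ that prevents $A$ from systematically destroying these bounds'' --- is not an argument; there is no concrete mechanism on the table, and it is not clear how to extract a $\Pi^{0,A}_1$ Lebesgue-null class containing your random $Y$ from the ingredients you have listed.

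The paper takes a genuinely different route that dispenses entirely with the randomness-preservation / Kurtz-test framing. The missing tool is the H\"olzl--Merkle characterization: $A$ is of non-DNC degree if and only if $A$ is \emph{infinitely often c.e.\ traceable}, i.e.\ there is a computable order $h$ such that every total $A$-computable function $s$ has a uniformly c.e.\ trace $(S_n)$ with $|S_n| < h(n)$ and $s(n) \in S_n$ infinitely often. One applies this to the $A$-computable function $s$ that, on input $n$, outputs a rational lower approximation (with precision $1/2$) of $\mu^A$ restricted to strings of length $f(n)$. The trace $(S_n)$ is then used to build a lower semi-computable continuous semi-measure $\rho$: each guess $z \in S_n$ is interpreted as a mass distribution on level $f(n)$ and injected into $\rho$ with weight $2^{-n-1}/h(n)$. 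The total cost stays below $1$, and on the infinitely many $n$ where $s(n) \in S_n$, one gets $\rho(T_{f(n)}) \geq 2^{-n-O(1)}/h(n)$ because $\mu^A(T_{f(n)}) > r$. Choosing the depth witness $f$ to satisfy the stronger bound $\M(T_{f(n)}) < 2^{-2n}/h(n)$ and using $\rho \leq^* \M$ yields the contradiction. So the non-DNC hypothesis is not used to massage a Kurtz test, but rather to ``pull'' the $A$-computable measure back down to an unrelativized c.e.\ object that can be directly compared against $\M$; that is the idea your sketch is missing.
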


\begin{proof}
Let $\C$ be a deep $\Pi^0_1$ class and $T$ its associated canonical co-c.e.\ tree. Let $A\in\Low(\MLR,\KR)$, which by Theorem \ref{thm:greenberg-miller-kr} (i) is equivalent to being of non-DNC degree. We appeal to a useful characterization of non-DNC degrees due to H\"olzl and Merkle~\cite{HolzlM2010}: $A$ is of non-DNC degree if and only if it is \emph{infinitely often c.e.\ traceable} (hereafter, i.o.\ c.e.\ traceable). This means that there exists a computable order function~$h$ such that the following holds: for every total $A$-computable function~$s: \N \rightarrow \N$, there exists a family $(S_n)_{n\in\N}$ of uniformly c.e.\ finite sets such that $|S_n|<h(n)$ for all~$n$ and $s(n) \in S_n$ for infinitely many~$n$. 

Let~$h$ be a computable order function witnessing the i.o.\ c.e.\ traceability of $A$.  Since $\C$ is deep, let~$f$ be a computable function such that $\M(T_{f(n)}) \leq 2^{-2n}/h(n)$. 
Suppose for the sake of contradiction that $\C$ is not $\tt$-negligible relative to~$A$, which means that there exists an $A$-computable measure~$\mu^A$ such that $\mu^A(\C)>r$ for some rational~$r>0$. Let $s\leq_TA$ be the function that on input~$n$ gives a rational lower-approximation, with precision $1/2$, of the values of $\mu^A$ on all strings of length~$f(n)$ (encoded as an integer). By this we mean that $s(n)$ gives us for all strings $\sigma$ of length~$f(n)$ a rational value $s(n,\sigma)$ such that $\mu^A(\sigma)/2 \leq s(n,\sigma) \leq \mu^A(\sigma)$. Let $(S_n)_{n\in\N}$  witness the traceability of $s$, i.e., the $S_n$'s are uniformly c.e., $|S_n|<h(n)$ for every~$n$, and $s(n) \in S_n$ for infinitely many~$n$. 

We now build a left-c.e.\ continuous semi-measure~$\rho$ as follows. For all~$n$, enumerate $S_n$. For each member $z \in S_n$, interpret $z$ as a mass distribution $\nu$ on the collection of strings of length $f(n)$. Then, for each string~$\sigma$ of length~$f(n)$, increase $\rho(\sigma)$ (as well as strings comparable with $\sigma$ in any way that ensures that $\rho$ remains a semi-measure) by $2^{-n-1}\nu(\sigma)/h(n)$. This has a total cost of $2^{-n-1}/h(n)$, and since there are at most $h(n)$ elements in~$S_n$, the total cost at level~$f(n)$ is at most $2^{-n-1}$.  Therefore the total cost of the construction of~$\rho$ is bounded by~$1$ and thus~$\rho$ is indeed a left-c.e.\ continuous semi-measure.

Now, for any~$n$ such that $s(n) \in S_n$ (as there are infinitely many such~$n$),  $\rho$ distributes an amount of at least $(\mu^A(T_{f(n)})/2) \cdot (2^{-n-1}/h(n))$ on $T_{f(n)}$, and since $\mu^A(T_n)>r$, this gives
\[
\rho(T_{f(n)}) \geq 2^{-n-O(1)}/h(n).
\]
However, we had assumed that 
\[
\M(T_{f(n)}) \leq 2^{-2n}/h(n),
\]
for all~$n$.  But since $\rho \leq^\times \M$, we get a contradiction.

\end{proof}

The following result involves the notion of relative $\tt$-reducibility.  For a fixed $A\in\cs$, a $\tt(A)$-functional is a total $A$-computable Turing functional.  Equivalently, we can define a $\tt(A)$-functional $\Psi^A$ in terms of a Turing functional $\Phi$ as follows:  Let $\Phi$ be defined on all inputs of the form $X\oplus A$.  Then we set $\Psi^A(X)=\Phi(X\oplus A)$.  Furthermore, one can show that there is an $A$-computable bound on the use of $X$ in the computation (just as there is a computable bound in the use function for unrelativized $\tt$-computations).

\begin{theorem}\label{thm:kurtz-cupping}
Let $X$ be Kurtz random and $A\in\Low(\KR)$. Then $X$ does not $\tt(A)$-compute any member of any deep $\Pi^0_1$ class.\end{theorem}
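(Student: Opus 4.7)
The plan is to derive a contradiction by combining Proposition~\ref{prop:low-for-tt-depth} with a standard pullback argument through the $\tt(A)$-functional. Suppose $X\in\KR$, $A\in\Low(\KR)$, and $X$ $\tt(A)$-computes some $Y=\Psi^A(X)\in\C$ for a deep $\Pi^0_1$ class $\C$ via a total $A$-computable functional $\Psi^A$.

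First I would introduce the probability measure $\mu^A$ defined by $\mu^A(\sigma)=\lambda(\{Z:\Psi^A(Z)\succeq\sigma\})$; this measure is $A$-computable, because totality of $\Psi^A$ together with its $A$-computable use bound allow one to compute $\mu^A(\sigma)$ exactly uniformly in~$\sigma$. Since $A\in\Low(\KR)\subseteq\Low(\MLR,\KR)$ by Theorem~\ref{thm:greenberg-miller-kr}(ii), Proposition~\ref{prop:low-for-tt-depth} applies and yields that $\C$ is $\tt$-negligible relative to~$A$, so in particular $\mu^A(\C)=0$. I would then pull back through $\Psi^A$: the set $(\Psi^A)^{-1}(\C)$ is a $\Pi^{0,A}_1$ class (because $\C$ is $\Pi^0_1$ and $\Psi^A$ is total and $A$-computable, so $Z\in(\Psi^A)^{-1}(\C)$ iff every prefix of $\Psi^A(Z)$ lies in the canonical co-c.e.\ tree of~$\C$), and by the very definition of $\mu^A$ combined with totality of $\Psi^A$, its Lebesgue measure equals $\mu^A(\C)=0$. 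This set contains $X$. However, $A\in\Low(\KR)$ together with $X\in\KR$ implies $X\in\KR^A$, which by definition forbids $X$ from lying in any $\Pi^{0,A}_1$ class of Lebesgue measure zero, giving the required contradiction.

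The only real obstacle is having Proposition~\ref{prop:low-for-tt-depth} in place; it does the heavy lifting of upgrading depth of~$\C$ to $\tt$-negligibility relative to~$A$ under a lowness hypothesis. Once that proposition is available, the remainder is essentially a preservation-of-Kurtz-randomness computation smoothed by the totality of the $\tt(A)$-functional~$\Psi^A$. The two-sided use of $A\in\Low(\KR)$ — via Theorem~\ref{thm:greenberg-miller-kr}(ii) to invoke the proposition, and directly to promote $X$ to an $A$-Kurtz random — fits the hypothesis of the theorem exactly.
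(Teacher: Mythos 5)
Your proof is correct and follows essentially the same route as the paper's: the crux in both cases is Proposition~\ref{prop:low-for-tt-depth} giving $\tt$-negligibility of~$\C$ relative to~$A$, then pulling back through the total $A$-functional to obtain a $\Pi^{0,A}_1$ class of Lebesgue measure zero containing~$X$, and finally invoking $\Low(\KR)$ to upgrade $X$ to an $A$-Kurtz random. The only cosmetic difference is that the paper phrases the pullback step by invoking the relativized versions of Theorem~\ref{thm:mass_problems_tt-deep} (upward closure of $\tt$-depth under Medvedev reducibility) and Theorem~\ref{prop:carac-tt-negl}, whereas you unwind those into an explicit computation with the pushforward measure~$\mu^A$; the underlying argument is identical.
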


\begin{proof}
Let $\C$ be a deep $\Pi^0_1$ class and $A\in\Low(\KR)$. By Proposition \ref{prop:low-for-tt-depth}, $\C$ is also $\tt$-deep relative to $A$. Let $\Phi^A$ be a $\tt(A)$-functional. The pre-image $\D$ of $\C$ under $\Phi^A$ is a $\Pi^0_1(A)$ class, which must be $\tt$-deep relative to $A$ as well, by Theorem~\ref{thm:mass_problems_tt-deep} relativized to~$A$. Now, applying Proposition~\ref{prop:carac-tt-negl} relativized to~$A$, $\D$ contains no $A$-Kurtz random sequence. But since $A$ is low for Kurtz randomness, $\D$ contains no Kurtz-random sequence as well.
\end{proof}

We now obtain a partial analogue of Theorem \ref{thm:k-trivial-deep-cupping}.

\begin{corollary}\label{cor:kurtz-cupping}
Let $X$ be Kurtz random and $A\in\Low(\KR)$.  Then $X \oplus A$ does not $\tt$-compute any member of any deep $\Pi^0_1$ class. 
\end{corollary}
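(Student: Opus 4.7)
The plan is to reduce the statement to Theorem~\ref{thm:kurtz-cupping} by absorbing the oracle $A$ into the functional. Suppose, for contradiction, that there is a $\tt$-functional $\Phi$ and a deep $\Pi^0_1$ class $\C$ such that $\Phi(X \oplus A) \in \C$. I would then define a new functional $\Psi$ with oracle $A$ by setting
\[
\Psi^A(Z) = \Phi(Z \oplus A)
\]
for every $Z \in \cs$. The first step is to check that $\Psi^A$ is a $\tt(A)$-functional in the sense defined just before Theorem~\ref{thm:kurtz-cupping}: since $\Phi$ is a $\tt$-functional, it is total on all of $\cs$ and its use on any input is bounded by some computable function; in particular $\Phi$ is defined on every sequence of the form $Z \oplus A$, and the use of $\Psi^A$ on $Z$ is bounded by a computable (hence $A$-computable) function of the input length.

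Having established this, the second step is immediate: $\Psi^A(X) = \Phi(X \oplus A) \in \C$, so $X$ $\tt(A)$-computes a member of the deep $\Pi^0_1$ class $\C$. Since by hypothesis $X$ is Kurtz random and $A \in \Low(\KR)$, this directly contradicts Theorem~\ref{thm:kurtz-cupping}, completing the proof.

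There is no real obstacle here; the only thing to be a little careful about is verifying that the definition of a $\tt(A)$-functional given in the paragraph preceding Theorem~\ref{thm:kurtz-cupping} genuinely accommodates our $\Psi^A$, i.e.\ that the $A$-computable bound on the use of $Z$ is automatic from the fact that $\Phi$ itself has a computable use bound on $Z \oplus A$ (reading off only the even-indexed bits of the join). Once this is acknowledged, Corollary~\ref{cor:kurtz-cupping} is just Theorem~\ref{thm:kurtz-cupping} rephrased via the uniform identification between $\tt$-reductions from $X \oplus A$ and $\tt(A)$-reductions from $X$.
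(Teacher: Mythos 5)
Your proof is correct and follows exactly the same route as the paper's: define $\Psi^A(Z) = \Phi(Z \oplus A)$, verify that totality of $\Phi$ makes $\Psi^A$ a $\tt(A)$-functional, and invoke Theorem~\ref{thm:kurtz-cupping}. You simply spell out the use-bound verification a bit more explicitly than the paper does.
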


\begin{proof}
Let $\Phi$ be a $\tt$-functional.  Since $\Phi$ is total, it is certainly total on all sequences of the form $X\oplus A$ for $X\in\cs$.  Thus $\Psi^A(X)=\Phi(X\oplus A)$ is a $\tt(A)$-functional.  By Theorem \ref{thm:kurtz-cupping}, it follows that $\Phi(X\oplus A)$ cannot be contained in any deep class.
\end{proof}

\begin{question}
Does Corollary \ref{cor:kurtz-cupping} still hold if we replace ``deep" with ``$\tt$-deep"?
\end{question}

We can extend Theorem \ref{thm:kurtz-cupping} to the following result, which proceeds by almost the same proof, the details of which are left to the reader.

\begin{theorem}
Let $X$ be Martin-L\"of random and $A$  be in $\Low(\MLR,\KR)$. Then $X$ does not $\tt(A)$-compute any member of any deep $\Pi^0_1$ class. (In particular, $X \oplus A$ does not $\tt$-compute any member of any deep $\Pi^0_1$ class). 
\end{theorem}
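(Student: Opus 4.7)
The plan is to mirror the proof of Theorem~\ref{thm:kurtz-cupping} essentially verbatim, substituting Martin-L\"of randomness for Kurtz randomness in the assumption on $X$, and then exploiting the precise definition of $\Low(\MLR,\KR)$ (namely $\MLR \subseteq \KR^A$) at the very last step. Concretely, I would fix a deep $\Pi^0_1$ class $\C$ and a $\tt(A)$-functional $\Phi^A$, and suppose for contradiction that $\Phi^A(X)\in\C$ for some Martin-L\"of random $X$. By Proposition~\ref{prop:low-for-tt-depth}, $\C$ is $\tt$-negligible relative to $A$, and hence $\tt$-deep relative to $A$ by the relativization of Proposition~\ref{prop:carac-tt-negl}.

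Next I would consider the preimage $\D = (\Phi^A)^{-1}(\C)$, which is a $\Pi^0_1(A)$ class. Since $\C\leq_s^A\D$ via $\Phi^A$ itself, the upward closure part of Theorem~\ref{thm:mass_problems_tt-deep}, relativized to $A$, forces $\D$ to be $\tt$-deep relative to $A$. Applying the relativized Proposition~\ref{prop:carac-tt-negl} once more, $\D$ contains no $A$-Kurtz random sequence. But $X\in\D$ by construction, and from $A\in\Low(\MLR,\KR)$ together with the Martin-L\"of randomness of $X$, we have $X\in\KR^A$, i.e.\ $X$ is $A$-Kurtz random. This is the desired contradiction.

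The parenthetical clause is an immediate corollary in the style of Corollary~\ref{cor:kurtz-cupping}: if $\Phi$ is a (total) $\tt$-functional, then $\Psi^A(Z):=\Phi(Z\oplus A)$ defines a $\tt(A)$-functional, so a $\tt$-computation of some $Y\in\C$ from $X\oplus A$ immediately yields a $\tt(A)$-computation of $Y$ from $X$, which the main statement has just ruled out. I do not anticipate any real obstacle: every ingredient is already on the shelf, the one genuinely new input being Proposition~\ref{prop:low-for-tt-depth}, which does the work of converting the weaker lowness hypothesis $A\in\Low(\MLR,\KR)$ into $\tt$-negligibility of deep classes relative to $A$. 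The only thing to double-check is that the proofs of Theorem~\ref{thm:mass_problems_tt-deep} and Proposition~\ref{prop:carac-tt-negl} relativize uniformly, which is routine since they use nothing beyond oracle-uniform constructions involving $A$-computable measures and $A$-Martin-L\"of/$A$-Kurtz randomness.
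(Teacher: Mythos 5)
Your proof is correct and is exactly the argument the paper intends; the paper explicitly states that this theorem ``proceeds by almost the same proof'' as Theorem~\ref{thm:kurtz-cupping} and leaves the details to the reader, and your reconstruction fills them in faithfully. In particular, you correctly isolate the one place where the two hypotheses diverge: $\Low(\KR)$ was used in Theorem~\ref{thm:kurtz-cupping} only at the final step to pass from ``no $A$-Kurtz random in $\D$'' to ``no Kurtz random in $\D$'', and you replace this with the observation that $A\in\Low(\MLR,\KR)$ together with $X\in\MLR$ already gives $X\in\KR^A$, which is all that the contradiction requires.
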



\section{Depth, mutual information, and the Independence Postulate}\label{sec:IP}

In this final section, we introduce the notion of mutual information and apply it to the notion of depth.  Roughly, what we prove is that every member of every deep class has infinite mutual information with Chaitin's $\Omega$, a Martin-L\"of random sequence that encodes the halting problem.  This generalizes a result of Levin's, that every consistent completion of $\PA$ has infinite mutual information with~$\Omega$.  We conclude with a discussion of the Independence Postulate, a principle introduced by Levin to derive the statement that no consistent completion of arithmetic is physically obtainable.

\subsection{The definition of mutual information}

First we review the definitions of Kolmogorov complexity of a pair and the universal conditional discrete semi-measure $\m(\cdot\mid\cdot)$. Let $\la\cdot,\cdot\ra:\str\times\str\rightarrow\str$ be a computable bijection.  Then we define $\K(\sigma,\tau):=\K(\la\sigma,\tau\ra)$.  Similarly, we set $\m(\sigma,\tau):=\m(\la\sigma,\tau\ra)$.  A conditional left-c.e.\ discrete semi-measure $m(\cdot\mid\cdot):\str\times\str\rightarrow[0,1]$ is a function satisfying $\sum_{\sigma\in\str} m(\sigma\mid\tau)\leq 1$ for every $\tau$.  Then $\m(\cdot\mid\cdot)$ is defined to be a universal conditional left-c.e.\ discrete semi-measure, so that for every conditional left-c.e.\ discrete semi-measure, there is some $c$ such that $m(\sigma\mid\tau)\leq c\cdot\m(\sigma\mid\tau)$ for every $\sigma$ and $\tau$.  Lastly, we define the conditional prefix-free Kolmogorov complexity $\K(\sigma\mid\tau)$ to be
\[
\K(\sigma\mid\tau)=\min\{|\xi|:U(\la\xi,\tau\ra)=\sigma\},
\]
where $U$ is a universal prefix-free machine.

The \emph{mutual information} of two strings~$\sigma$ and $\tau$, denoted by $\I(\sigma:\tau)$, is defined by
\[
\I(\sigma\,:\,\tau) = \K(\sigma) + \K(\tau) - \K(\sigma,\tau)
\]
or equivalently by
\[
2^{\I(\sigma\,:\,\tau)} =^{\times} \frac{\m(\sigma,\tau)}{\m(\sigma) \cdot \m(\tau)}.
\]
By the symmetry of information (see G\'acs~\cite{Gacs-notes}), we also have
\begin{equation}\label{eqn-syminf}
2^{\I(\sigma\,:\,\tau)} =^\times  \frac{\m(\sigma \mid \tau,\K(\tau))}{\m(\sigma)} =^\times  \frac{\m(\tau \mid \sigma,\K(\sigma))}{\m(\tau)}.
\end{equation}
Levin \cite{Levin2013} extends mutual information to infinite sequences by setting
\[
2^{\I(X\,:\,Y)} = \sum_{\sigma, \tau \in \str} \m^X(\sigma) \cdot \m^Y(\tau) \cdot 2^{\I(\sigma\,:\,\tau)}.
\]

\subsection{Mutual information and depth}

Recall that Chaitin's $\Omega$ can be obtained as the probability that a universal prefix-free machine will halt on a given input, that is, $\Omega=\sum_{U(\sigma){\downarrow}}2^{-|\sigma|}$, where $U$ is a fixed universal prefix-free machine.  Generalizing a result of Levin's from \cite{Levin2013}, we have:

\begin{theorem}\label{thm:mutual-info}
Let $\C$ be a $\Pi^0_1$ class and $T$ its associated co-c.e.\ tree. Suppose~$\C$ is deep, witnessed by a computable order function~$f$ such that $\m(T_{f(n)}) \leq 2^{-n}$. Then for every $Y \in \C$ and all~$n$,
\[
\I\bigl(\Omega \uh n\,:\,Y \uh f(n)\bigr) \geq n-O(\log n).
\]
In particular,
\[
\I(\Omega\,:\,Y)=\infty.
\]
\end{theorem}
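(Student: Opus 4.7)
The plan is to reduce the bound on mutual information to a bound on the joint semi-measure using the coding theorem. Since $\Omega$ is Martin-L\"of random one has $\K(\Omega\uh n) \geq n - O(1)$, hence $\m(\Omega\uh n) \leq^* 2^{-n}$. Combining this with the identity $2^{\I(\sigma:\tau)} =^* \m(\sigma,\tau)/(\m(\sigma)\m(\tau))$, the desired inequality reduces to establishing
\[
\m(\Omega\uh n, Y\uh f(n)) \geq^* \m(Y\uh f(n))/n^{O(1)}
\]
for every $Y \in \C$. The ``in particular'' clause $\I(\Omega:Y)=\infty$ is then immediate from Levin's series representation of $2^{\I(X:Y)}$, since $\m^\Omega(\Omega\uh n)$ and $\m^Y(Y\uh f(n))$ are each $\geq^* 1/n^2$, so the $(\sigma,\tau) = (\Omega\uh n, Y\uh f(n))$ term alone diverges as $n \to \infty$.

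To prove the displayed inequality, by universality of $\m$ on pairs it suffices to build a lower semi-computable discrete semi-measure $\nu$ on $\str \times \str$ with the required lower bound. Enumerate in parallel the co-c.e.\ approximation $T[s]$ to the canonical tree, the non-decreasing rational approximation $\m_s$ to $\m$, and the partial sum $\Omega_s = \sum_\tau \m_s(\tau)$, which converges to $\Omega$ from below. At each stage $s$, for each $\tau$ of length $f(n)$ still in $T[s]_{f(n)}$ with a positive increment $\delta := \m_s(\tau) - \m_{s-1}(\tau)$, credit $\delta/n^2$ to the pair $\nu(\Omega_s\uh n, \tau)$. This process is lower semi-computable, and summing yields $\sum_{\sigma,\tau}\nu(\sigma,\tau) \leq \sum_n (1/n^2)\sum_{|\tau|=f(n)}\m(\tau) \leq \pi^2/6$, so $\nu$ is a semi-measure up to a multiplicative constant.

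For $Y \in \C$, the string $\tau = Y\uh f(n)$ is a node of $T_{f(n)}$ and thus of every $T[s]_{f(n)}$, so the tree membership clause never filters out an increment of $\m_s(\tau)$. The contributions credited to $\nu(\Omega\uh n, Y\uh f(n))$ are exactly those at stages $s$ with $\Omega_s\uh n = \Omega\uh n$, i.e.\ $s \geq s^* := \min\{s : \Omega_s \geq 0.\Omega\uh n\}$. Telescoping the increments gives
\[
\nu(\Omega\uh n, Y\uh f(n)) \geq \frac{\m(Y\uh f(n)) - \m_{s^*-1}(Y\uh f(n))}{n^2}.
\]

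The main technical hurdle is to force the numerator to be at least $\m(Y\uh f(n))/n^{O(1)}$, since a priori most of the mass $\m(Y\uh f(n))$ could be enumerated before $\Omega_s$ reaches $0.\Omega\uh n$. I expect to overcome this by refining the construction so that, at each earlier stage $s < s^*$, the increment $\delta$ is additionally spread among the candidate prefixes $\sigma$ of length $n$ compatible with $\Omega_s$ (i.e.\ those with $0.\sigma \geq \Omega_s$), with weights chosen so that the true $\Omega\uh n$ still accumulates a poly$(n)$ fraction; the depth bound $\m(T_{f(n)}) \leq 2^{-n}$ is what keeps the total bookkeeping bounded after summing over $n$ with the $1/n^2$ weights. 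Applying the coding theorem to $\nu \leq^* \m$ then translates the resulting lower bound into the stated estimate $\I(\Omega\uh n : Y\uh f(n)) \geq n - O(\log n)$.
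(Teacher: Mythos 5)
Your framework is sound: reducing the bound to $\m(\Omega\uh n, Y\uh f(n)) \geq^* \m(Y\uh f(n))/n^{O(1)}$ via the coding theorem and $\m(\Omega\uh n)\leq^* 2^{-n}$, and deriving the ``in particular'' clause from Levin's series together with $\m^\Omega(\Omega\uh n), \m^Y(Y\uh f(n)) \geq^* 1/n^2$, is the right strategy and matches the spirit of the paper. The gap is in the construction of $\nu$, and it is not a mere technicality.

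Your basic construction credits $\delta/n^2$ to $\nu(\Omega_s\uh n, \tau)$ for each increment $\delta$ of $\m_s(\tau)$. As you correctly note, this only yields $\nu(\Omega\uh n, Y\uh f(n)) \geq \bigl(\m(Y\uh f(n)) - \m_{s^*-1}(Y\uh f(n))\bigr)/n^2$ where $s^*$ is the stage at which $\Omega_s\uh n$ stabilizes. There is no a priori lower bound on this difference: essentially all of $\m(Y\uh f(n))$ can be enumerated before $s^*$, making the right-hand side negligible. Your proposed fix—spreading each early increment among the candidate prefixes $\sigma$ of length $n$ compatible with $\Omega_s$—cannot be made to work as stated. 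At early stages there are $\Theta(2^n)$ such candidates, and no weighting scheme can guarantee the true $\Omega\uh n$ an $1/\mathrm{poly}(n)$ share without already knowing which candidate is correct: uniform weighting gives a $2^{-n}$ fraction, and weighting proportional to $\m(\sigma)$ gives $\Omega\uh n$ a share of order $\m(\Omega\uh n)\leq^* 2^{-n}$, both exponentially small. The depth bound $\m(T_{f(n)}) < 2^{-n}$ controls the \emph{final} $\m$-mass at level $f(n)$ but says nothing about \emph{when} the mass of a particular $Y\uh f(n)$ appears relative to the enumeration of $\Omega$, so it does not rescue the bookkeeping.

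The ingredient you are missing is Solovay completeness of $\Omega$, which is exactly what the paper uses to close this gap. Instead of tracking $\Omega_s$ directly, the paper introduces the auxiliary left-c.e.\ real $\alpha = \sum_n a_n/n^2$ with $a_n = \sum_{|\sigma|=f(n),\;\sigma\notin T_{f(n)}}\m(\sigma)$. Solovay completeness means that $\Omega\uh n$ computes $\alpha$ to precision $\approx 2^{-n}$, hence $a_n$ to precision $\approx 2^{-n}n^2$, and hence a stage $s_n$ at which the co-c.e.\ set $S := T_{f(n)}[s_n]$ satisfies $T_{f(n)}\subseteq S$ and $\m(S)\leq^* 2^{-n}n^2$. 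This ``good enough'' approximation is the object $\Omega\uh n$ genuinely carries information about; one then defines the conditional semi-measure $\sigma\mapsto \m(\sigma)\cdot 2^n n^{-2}\cdot \mathbf{1}_{\sigma\in S}$ and invokes universality and the symmetry of information to get $\I(\Omega\uh n : \sigma)\geq^+ n - 2\log n$ for every $\sigma\in T_{f(n)}$. Your online construction has no mechanism that lets $\Omega\uh n$ ``predict'' when the co-c.e.\ approximation has nearly converged, and without some form of the Solovay argument I do not see how your semi-measure $\nu$ can be made to concentrate enough mass on the pair $(\Omega\uh n, Y\uh f(n))$.
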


\begin{proof}
Our proof follows the same idea Levin uses for consistent completions of $\PA$ (see ~\cite[Theorem 1]{Levin2013}), although some extra care is needed for arbitrary deep classes. Suppose for a given~$n$ we have an exact description~$\tau$ of $T_{f(n)}$; that is, on input $\tau$, the universal machine outputs a code for the finite set $T_{f(n)}$. By the definition of $f$,
\[
\sum_{\sigma \in T_{f(n)}} \m(\sigma) \leq 2^{-n}
\] 
or equivalently 
\[
\sum_{\sigma \in T_{f(n)}} \m(\sigma) \cdot 2^{n} \leq 1
\]
Therefore, the quantity $\m(\sigma) \cdot 2^{n} \cdot \mathbf{1}_{\sigma \in T_{f(n)}}$ is a discrete semi-measure, but it is not necessarily left-c.e.\ since $T_{f(n)}$ is merely co-c.e.\ (and, in general, not c.e. by Proposition \ref{prop:no-deep-comp-tree}). However, it \emph{is} a left-c.e.\ semi-measure relative to the exact description~$\tau$ of $T_{f(n)}$. Thus, for every $\sigma \in T_{f(n)}$, by the universality of $\m(\cdot\mid\tau)$,
\[
\m(\sigma \mid \tau) \geq^\times \m(\sigma) \cdot 2^n.
\]
By Equation \ref{eqn-syminf} above, we have
\[
2^{\I(\sigma\,:\,\tau)} =^\times  \frac{\m(\sigma \mid \tau,\K(\tau))}{\m(\sigma)}  \geq^\times  \frac{\m(\sigma \mid \tau)}{\m(\sigma)} \geq^\times2^n,
\]
and hence $\I(\tau\,:\,\sigma) \geq^+ n$.  We would like to apply this fact to the case where $\sigma = Y \uh f(n)$ and $\tau = \Omega \uh n$. But this is not technically sufficient, as $\Omega \uh n$ does not necessarily contain enough information to exactly describe $T_{f(n)}$. This is not an obstacle in Levin's argument for completions of $\PA$, but it is for arbitrary deep classes. 

However, $\Omega \uh n$ contains enough information to get a ``good enough" approximation of $T_{f(n)}$. Let us refine the idea above: suppose now that $\tau$ is no longer an exact description of $T_{f(n)}$, but is a description of a set of strings~$S$ of length~$f(n)$ such that $T_{f(n)} \subseteq S$ and 
\[
\sum_{\sigma \in S} \m(\sigma) \leq^\times 2^{-n} n^2.
\]
Then, by following the same reasoning as above, we would have $\I(\sigma : \tau) \geq^+ n-2\log n$ for all $\sigma \in S$ (and thus all $\sigma \in T_{f(n)}$). We shall prove that $\Omega \uh n$ contains enough information to recover such a set~$S$, thus proving the theorem. 

The real number $\Omega$ is left-c.e.\ and Solovay complete (see~\cite[Section 9.1]{DowneyH2010}). As a consequence, for every other left-c.e.\ real $\alpha$, knowing the first~$k$ bits of $\Omega$ allows us to compute the first $k-O(1)$ bits of $\alpha$. For all~$n$, define:
\[
a_n = \sum_{\substack{|\sigma|=f(n) \\ \sigma \notin T_{f(n)}}} \m(\sigma)
\]
and observe that $a_n$ is left-c.e.\ uniformly in~$n$ (because $T_{f(n)}$ is co-c.e.\ uniformly in~$n$), and belongs to $[0,1]$. Define now
\[
\alpha = \sum_{n\in\N} \frac{a_n}{n^2},
\]
which is also a left-c.e.\ real. Thus, knowing the first $n$ bits of $\Omega$ gives us the first $n-O(1)$ bits of $\alpha$, i.e., an approximation of $\alpha$ with precision $2^{-n}\cdot O(1)$. In particular, one can find a stage~$t_n$ such that $\alpha-\alpha[t_n] \leq 2^{-n}\cdot O(1)$, and thus $a_n-a_n[t_n] \leq 2^{-n} \cdot n^2 \cdot O(1)$. By definition of $a_n$,
\[
a_n[t_n] = \sum_{\substack{|\sigma|=f(n) \\ \sigma \notin T_{f(n)}[t_n]}} \m(\sigma)[t_n],
\]
and since $|a_n - a_n[t_n]| \leq 2^{-n} \cdot n^2 \cdot O(1)$ , this implies
\[
\sum_{\substack{|\sigma|=f(n) \\ {\sigma \in T_{f(n)}[t_n]} \setminus T_{f(n)}}} \m(\sigma) \leq^\times 2^{-n} \cdot n^2.
\]
But recall from above that 
\[
\sum_{\substack{|\sigma|=f(n) \\ {\sigma \in T_{f(n)}}}} \m(\sigma) \leq 2^{-n}.
\]
Combining these two facts, and taking~$S$ to be the set $T_{f(n)}[t_n]$, we have $T_{f(n)} \subseteq S$ and 
\[
\sum_{\sigma \in S} \m(\sigma) \leq^\times 2^{-n} \cdot n ^2,
\]
which establishes the first part of the theorem.  To see that the second part of the statement follows from the first, observe that
\[
\m^Y(Y \uh f(n)) =^\times \m^Y(n) \geq^\times \m(n) \geq^\times 1/n^2,
\] 
\[
\m^\Omega(\Omega \uh n) =^\times \m^\Omega(n) \geq^\times \m(n) \geq^\times 1/n^2,
\] and $2^{\I(Y \uh f(n)\,:\,\Omega \uh n)}=2^n/n^{O(1)}$ (since $\I\bigl(Y \uh f(n)\,:\,\Omega \uh n\bigr)\geq^+n-2\log n$ as established above). Then we have
\begin{eqnarray*}
2^{\I(\Omega\,:\, Y)} & = & \sum_{\sigma, \tau \in \str} \m^\Omega(\sigma) \cdot \m^Y(\tau) \cdot 2^{\I(\sigma\,:\,\tau)}\\
 &\geq & \sum_{n\in\N} \m^\Omega(Y\uh f(n)) \cdot \m^Y(\Omega\uh n) \cdot 2^{\I(Y\uh f(n)\,:\,\Omega\uh n)}\\
&\geq & \sum_{n\in\N} 2^n/n^{O(1)}=\infty.
\end{eqnarray*}
\end{proof}

\begin{remark}
The converse of this theorem does not hold, i.e., there is a $\Pi^0_1$ class that is not deep but all of whose elements have infinite mutual information with $\Omega$. This follows from Theorem~\ref{thm:depth-not-Turing} and the fact that having infinite mutual information with $\Omega$ is a property that is invariant under addition or deletion of a finite prefix. \\
\end{remark}

Levin's proof of Theorem~\ref{thm:mutual-info} restricted to the particular case of completions of $\PA$ is the mathematical part of a more general discussion, the other part of which is philosophical in nature (see \cite{Levin2013} and \cite{Levin1984}). While G\"odel's theorem asserts that no completion of $\PA$ can be \emph{computably} obtained, Levin's goal is to show that no completion of $\PA$ can be obtained by any physical means whatsoever (computationally or otherwise), thus generalizing G\"odel's theorem. Levin does not fully specify what he means by physically obtainable (the exact term he uses is ``located in the physical world"), but nonetheless he makes the following postulate, which he dubs the ``Independence Postulate": if $\sigma$ is a string which can be unambiguously defined by an~$n$-bit mathematical formula\footnote{A caveat: having an unambiguous mathematical definition does not mean that the string can be computably reconstructed from this description.  For example, ``the first $2^n$ bits of the halting problem'' (written as a mathematical formula) unambiguously defines an object but does not in any way give us access to the actual value of this object. Thus, an object with an~$n$-bit definition \emph{need not} have Kolmogorov complexity less than or equal to~$n+O(1)$.} (say, in ZFC) 
 and~$\tau$ can be located in the physical world with a $k$-bit description, then for a fixed small constant~$c$ (independent of~$\sigma$ and~$\tau$), one has $\I(\sigma\,:\,\tau) < n+k+c$.

In particular, if one admits that some infinite sequences are physically obtainable, the Independence Postulate for infinite sequences says that if $X$ and~$Y$ are two infinite sequences with~$X$ mathematically definable and~$Y$ physically obtainable, then $\I(X\,:\,Y) < \infty$. Being $\Delta^0_2$, $\Omega$ is mathematically definable, and, as Levin shows, $\I(\Omega\,:\,Y)=\infty$ for any completion~$Y$ of~$\PA$. Thus, assuming the Independence Postulate, no completion of~$\PA$ is physically obtainable. 

Our Theorem~\ref{thm:mutual-info} extends Levin's theorem and, assuming the Independence Postulate, shows that no member of a deep class (shift-complex sequences, compression functions, etc.) is physically obtainable. Of course, evaluating the validity of the  Independence Postulate would require an extended philosophical discussion that would take us well beyond the scope of this paper. 

In any case, whether or not the reader accepts the Independence Postulate, Theorem~\ref{thm:mutual-info} is interesting in its own right. In fact, it is quite surprising because it seems to contradict the ``basis for randomness theorem" (see~\cite[Theorem 8.7.2]{DowneyH2010}), which states that if $X$ is a Martin-L\"of random sequence and $\mathcal{C}$ is a $\Pi^0_1$ class, then there exists a member~$Y$ of $\mathcal{C}$ such that $X$ is random relative to~$Y$. If a sequence~$X$ is random relative to another sequence~$Y$, the intuition is that~$Y$ ``knows nothing about~$X$", and thus one could conjecture that $\I(X : Y) < \infty$. However, this cannot always be the case, since by Theorem~\ref{thm:mutual-info}, $\I(\Omega:Y) = \infty$ for \emph{all} members~$Y$ of a deep $\Pi^0_1$ class $\C$, even though $\Omega$ is random relative to some $Y\in\C$. 

This apparent paradox can be explained by taking a closer look at the definition of mutual information. Let~$\C$ be a deep $\Pi^0_1$ class, whose canonical co-c.e.\ tree~$T$ satisfies  $\m(T_{f(n)}) \leq 2^{-n}$ for some computable function~$f$. By Theorem~\ref{thm:mutual-info} and the symmetry of information, for every $Y\in\C$ we have
\begin{equation}\label{eq:omega1}
\K(\Omega \uh n) - \K\big(\Omega \uh n \mid Y \uh f(n), k_n\big) = \I\big(\Omega \uh n : Y \uh f(n)\big) -O(1)\geq n-O(\log n),
\end{equation}
where $k_n$ stands for $\K(Y \uh f(n))$. 
Take a~$Y \in \C$ such that $\Omega$ is random relative to~$Y$. It is well-known that a sequence~$Z$ is random if and only if $K(Z \uh n \mid n) \geq n-O(1)$ (see for example G\'acs~\cite{Gacs1980}). Applying this fact (relativized to~$Y$) to $\Omega$, we have
\[
\K^Y(\Omega \uh n \mid n) \geq n-O(1)
\]
and thus in particular that 
\[
\K\bigl(\Omega \uh n \mid Y \uh f(n)\bigr) \geq n-O(1).
\]
Since $\K(\Omega \uh n) \leq n+O(\log n)$, it follows that

\begin{equation}\label{eq:omega2}
\K(\Omega \uh n) - \K\bigl(\Omega \uh n \mid Y \uh f(n)\bigr) \leq O(\log n).
\end{equation}

The only difference between (\ref{eq:omega1}) and (\ref{eq:omega2}) is the term~$k_n=\K(Y \uh f(n))$. 
But it makes a big difference, as one can verify that 
\[
\K\bigl(\Omega \uh n \mid Y \uh f(n)\bigr)-\K\bigl(\Omega \uh n \mid Y \uh f(n), k_n\bigr)\geq n-O(\log n).
\]
Informally, while $Y$ ``knows nothing" about $\Omega$, the complexity of its initial segments, seen as a function, does. In particular, the change of complexity caused by $k_n$ implies that $\K\bigl(\K(Y \uh f(n)\bigr) \geq n - O(\log n)$ and thus $\K(Y \uh f(n)) \geq 2^n/n^{O(1)}$. \\

\vspace{5mm}

\noindent \textbf{Acknowledgements} We would like to thank Noam Greenberg, Rupert H\"olzl, Mushfeq Khan, Leonid Levin, Joseph Miller, Andr\'e Nies, Paul Shafer, and Antoine Taveneaux for many fruitful discussions on the subject. We would also like to thank the anonymous referees for a number of helpful suggestions.  Lastly, we are particularly grateful to Steve Simpson, who provided very detailed feedback on the first arXiv version of this paper.

\bibliographystyle{asl}
\bibliography{deep-classes}

\end{document}